\documentclass{amsart}

\usepackage{amsmath}
\usepackage{amsthm} 
\usepackage{graphicx}
\usepackage{color}
\usepackage{scalerel}

\usepackage{enumerate}
\usepackage{amsfonts}
\usepackage{amssymb}
\usepackage[hidelinks]{hyperref}
\usepackage[capitalize]{cleveref}

\usepackage{nicefrac}
\usepackage{xfrac}

\usepackage[normalem]{ulem}

\usepackage{mathtools}

\newcommand{\e}{\varepsilon}
\newcommand{\tp}{\mathrm{tp}}

\newcommand{\Lc}{\mathcal{L}}
\newcommand{\Uc}{\mathcal{U}}

\newcommand{\Ac}{\mathcal{A}}
\newcommand{\Dc}{\mathcal{D}}
\newcommand{\Cc}{\mathcal{C}}

\newcommand{\Rb}{\mathbb{R}}

\newcommand{\Eb}{\mathbb{E}}
\newcommand{\frk}{\mathfrak}

\newcommand{\bbar}{\bar{b}}
\newcommand{\ybar}{\bar{y}}
\newcommand{\xbar}{\bar{x}}
\newcommand{\zbar}{\bar{z}}

\newcommand{\hbarr}{\bar{h}}

\def\Av{\operatorname{Av}}

\def\tp{\operatorname{tp}}
\def\KM{\operatorname{KM}}

\def\Aut{\operatorname{Aut}}
\def\Sym{\operatorname{Sym}}
\def\sym{\operatorname{sym}}
\def\fSym{\operatorname{Sym}_{\operatorname{fin}}}

\providecommand{\dotdiv}{
  \mathbin{
    \vphantom{+}
    \text{
      \mathsurround=0pt 
      \ooalign{
        \noalign{\kern-.35ex}
        \hidewidth$\smash{\cdot}$\hidewidth\cr 
        \noalign{\kern.35ex}
        $-$\cr 
      }%
    }%
  }%
}

\newcommand{\cB}{\mathcal{B}}

\newcommand{\cU}{\mathcal{U}}
\newcommand{\cL}{\mathcal{L}}

\newcommand{\R}{\mathbb{R}}
\newcommand{\seq}{\subseteq}
\newcommand{\kM}{\mathfrak{M}}
\newcommand{\abar}{\bar{a}}
\newcommand{\cbar}{\bar{c}}

\newcommand{\bfx}{\boldsymbol{x}}
\newcommand{\bfy}{\boldsymbol{y}}

\newcommand{\tsup}[1]{{\textstyle\sup_{#1}}}
\newcommand{\tinf}[1]{{\textstyle\inf_{#1}}}
\newcommand{\tmax}[1]{{\textstyle\max_{#1}}}
\newcommand{\tmin}[1]{{\textstyle\min_{#1}}}

\newtheorem{thm}{Theorem}[section]
\newtheorem{theorem}[thm]{Theorem}
\newtheorem{prop}[thm]{Proposition}
\newtheorem{proposition}[thm]{Proposition}
\newtheorem{lem}[thm]{Lemma}
\newtheorem{lemma}[thm]{Lemma}

\newtheorem{corollary}[thm]{Corollary}
\newtheorem{fact}[thm]{Fact}

\theoremstyle{definition}
\newtheorem{question}[thm]{Question}

\newtheorem{definition}[thm]{Definition}
 
\newtheorem{remark}[thm]{Remark}
\newtheorem*{claim-star}{Claim}

\newcommand{\clqedsym}{\dashv_{\text{\scriptsize claim}}}

\newcommand{\clqed}{%
  \ifmmode
    \eqno{\clqedsym}%
  \else
    \hfill$\clqedsym$%
  \fi
}

\def\Ind{\setbox0=\hbox{$x$}\kern\wd0\hbox to 0pt{\hss$\mid$\hss}
\lower.9\ht0\hbox to 0pt{\hss$\smile$\hss}\kern\wd0}

\def\Notind{\setbox0=\hbox{$x$}\kern\wd0\hbox to 0pt{\mathchardef
\nn=12854\hss$\nn$\kern1.4\wd0\hss}\hbox to
0pt{\hss$\mid$\hss}\lower.9\ht0 \hbox to 0pt{\hss$\smile$\hss}\kern\wd0}

\newcommand{\dotminus}{%
  \!\buildrel\textstyle .\over{%
    \hbox{\vrule height3pt depth0pt width0pt}{\smash{-}}%
  }%
}

\newcommand{\inv}{^{\text{-}1}}

\newcommand{\fim}{\textit{fim}}

\renewcommand{\epsilon}{\varepsilon}

\allowdisplaybreaks 

\begin{document}

\title{Generically stable Keisler measures}

\date{September 16, 2026}
\author[G. Conant]{Gabriel Conant}

\address{Department of Mathematics\\
University of Illinois Chicago\\
Chicago, IL 60607, USA}
\email{gconant@uic.edu}%

\author[K. Gannon]{Kyle Gannon}

\address{Beijing International Center for Mathematical Research (BICMR)\\
Peking University\\
No. 5 Yiheyuan Road, Haidian District, Beijing, China}
\email{kgannon@bicmr.pku.edu.cn}

\author[J. Hanson]{James E. Hanson}

\address{Department of Mathematics \\
  Iowa State University \\
  396 Carver Hall \\
  411 Morrill Road \\
  Ames, IA 50011, USA}
\email{jameseh@iastate.edu}

\thanks{GC was partially supported by NSF grant DMS-2452816. KG was partially supported by the Fundamental Research Funds for the Central Universities, Peking University, grant no.\ 7100604835 and by the National Natural Science Foundation of China, grant no.\ 12501001. JH was partially supported by NSF grant 2554117.}

\maketitle

\vspace{-10pt}

\begin{abstract}
Given a first-order theory $T$ (in discrete or continuous logic) and a Borel-definable global Keisler measure $\mu$ in $T$, we show that the following conditions are equivalent: $(i)$ $\mu$ is a frequency interpretation measure; $(ii)$ $\mu$ is definable and its canonical ``random extension" $r_\mu$   is generically stable in the randomization theory $T^R$; $(iii)$ $\mu$ is ``self-averaging". This result establishes a robust notion of generic stability for Keisler measures, which resolves a long-term research objective from  previous work. The implications $(i)\Rightarrow(ii)\Rightarrow (iii)$ were previously established by the authors (for $T$ discrete). The primary focus of this paper is  the reverse implications $(iii)\Rightarrow (ii)\Rightarrow(i)$. We also prove that generically stable measures are closed under Morley products, answering another well-known question that  was open  even in the case of types. These results are obtained through the use of AI models.
\end{abstract}

 \section{Introduction}

  In \cite{CoGa}, the first two authors observed that a global type in a first-order theory is generically stable (as defined by Pillay and Tanovi\'{c} \cite{PiTa}) if and only if the corresponding Dirac measure  is a \emph{frequency interpretation measure} (\emph{fim}) (as defined by Hrushovski, Pillay, and Simon \cite{HPS} in their work on NIP theories). This motivated an extended investigation into the question of whether \emph{fim} is the ``right" analogue of generic stability for Keisler measures. Recently in \cite{CGH2}, we proposed two other such analogues and we proved a chain of implications between these notions and \emph{fim}. In this paper, we will  finish the task of showing that all three notions are  equivalent, which we view as a definitive resolution of this question.

We work with a  complete  theory $T$ in discrete or continuous logic with monster model $\cU$. 
 The randomization $T^R$ of $T$ will play a central role in our results. Given an atomless probability algebra $\Omega$ and a model $M$ of $T$, we let $M^{\Omega}\models T^R$ denote the randomization generated by measurable maps from $\Omega$ into $M$ with finite image. As in \cite{CGH2}, we let $\Cc$ be a monster model of $T^R$, with $\cU^\Omega\prec\Cc$. Given a   Keisler measure $\mu\in\kM_x(\cU)$, if $\mu$ is definable over $M\prec\cU$ then,  by a result of Ben Yaacov \cite{BYT},  there is a unique  $M^\Omega$-definable type $r_\mu\in S_x(\Cc)$, which is a ``canonical extension" of  $\mu$ in a  sense made precise below. We now state the first main result of this paper.
 
 \begin{theorem}\label{thm:main}
 Suppose $\mu\in\kM_x(\cU)$ is Borel-definable over $M\prec\cU$. Then the following are equivalent.
 \begin{enumerate}[$(i)$]
 \item $\mu$ is a frequency interpretation measure (fim) over $M$.
 \item $\mu$ is definable over $M$ and $r_\mu$ is generically stable over $M^\Omega$. 
 \item $\mu$ is self-averaging over $M$.
 \end{enumerate}
 \end{theorem}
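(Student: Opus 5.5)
Since the implications $(i)\Rightarrow(ii)\Rightarrow(iii)$ were established in \cite{CGH2} (for discrete $T$, and the arguments there go through verbatim in continuous logic once \emph{fim}, randomizations and self-averaging are phrased with $[0,1]$-valued formulas), the plan is to prove $(iii)\Rightarrow(ii)$ and $(ii)\Rightarrow(i)$.

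For $(iii)\Rightarrow(ii)$, assume $\mu$ is self-averaging over $M$. The first step is to get definability of $\mu$ over $M$. Self-averaging says that for a Morley sequence $(a_i)$ in $\mu$ (realized in the randomization, i.e.\ via $\mu^{(\omega)}$), the averages $\frac1n\sum_{i<n}\varphi(a_i,b)$ converge to $\mu(\varphi(x,b))$ uniformly enough. I would use this to write $b\mapsto\mu(\varphi(x,b))$ as a uniform limit of maps each continuous on $S_y(M)$, which gives definability. With definability in hand, $r_\mu$ exists and is $M^\Omega$-definable by Ben Yaacov's result. To show $r_\mu$ is generically stable, I would use the type characterization: for every formula $\psi$ of $T^R$ (of the form $\Eb[\varphi(x,y)]$ and its continuous combinations), the average along a Morley sequence of $r_\mu$ over $M^\Omega$ converges to $r_\mu$. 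Morley sequences of $r_\mu$ should be computable from iid sequences of $\mu$: a realization of $r_\mu^{(n)}$ is a random variable whose fibers realize $\mu^{(n)}$-Morley sequences. So convergence of $\frac1n\sum\Eb[\varphi(a_i,b)]$ reduces, after taking expectations over $\Omega$, to the self-averaging hypothesis. The subtle point is that $b$ ranges over random elements of $\Cc$ rather than $\cU$, so I need the self-averaging condition to hold uniformly in $b$, integrated over a probability space.

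For $(ii)\Rightarrow(i)$, assume $r_\mu$ is generically stable over $M^\Omega$. Generically stable types are \emph{fim} as Dirac measures (the observation of \cite{CoGa}, valid also in continuous logic), so for each $T^R$-formula $\Eb[\varphi(x,y)]$ and $\e>0$ there is an $M^\Omega$-formula $\theta_n(x_1,\dots,x_n)$ in $r_\mu^{(n)}$ such that any tuple satisfying it has $\Eb$-averages $\e$-close to $r_\mu$ for all $b\in\Cc$. The main task is to pull this back to $T$. I would specialize $b$ to constants from $\cU\subseteq\cU^\Omega$. Then I would take $(a_1,\dots,a_n)\in(\cU^\Omega)^n$ realizing $\theta_n$, with $\Eb[\varphi(\bar a,b)]$ computing an integral over $\omega\in\Omega$ of $\mathrm{Av}(\varphi(a_i(\omega),b))$. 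The idea is to use a Chebyshev/concentration argument to show that for most $\omega$ the fiber tuple $\bar a(\omega)$ works uniformly. Here I would apply the bound with $\varphi$ replaced by the squared deviation $(\varphi(x_i,b)-\mu(\varphi(x,b)))$-type formulas, or use products $\varphi(x_i,y)\varphi(x_j,y)$. Then I would express the set of good fibers by an $L(M)$-formula obtained by ``de-randomizing'' $\theta_n$. This should be possible since $\theta_n$ is built from $\Eb$ of $M$-formulas, and $r_\mu$ restricted to such formulas is just $\mu^{(n)}$.

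I expect this last translation to be the main obstacle: uniformity over all $b$ must survive passing from expectations to individual fibers. My first attempt would apply generic stability in $T^R$ to the product formula $\Eb[\varphi(x,y)\cdot\varphi(x',y)]$ to control variance uniformly. That gives an $L(M)$ condition $\chi_n$, with $\mu^{(n)}(\chi_n)$ near $1$, that forces empirical averages to be $\e$-close to $\mu$ for all $b$.
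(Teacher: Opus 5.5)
\textbf{There is a genuine gap in $(iii)\Rightarrow(ii)$, which is the heart of the theorem.} Your $(ii)\Rightarrow(i)$ sketch is also incomplete as written, though it can be repaired.

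First, a misreading. Self-averaging is not a statement about averages $\frac1n\sum_{i<n}\varphi(a_i,b)$. It says $\lim_i\lambda|_{x_i}=\mu$ for every $\lambda\in\KM(\mu/M)$, with parameters only from $\cU$.

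You correctly locate the difficulty: generic stability of $r_\mu$ must be tested against random parameters $b\in\Cc^y$. But ``take expectations over $\Omega$ and apply self-averaging fiberwise'' does not work, and no uniformity in $b$ rescues it.

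\begin{itemize}
\item The joint type over $M^c$ of a Morley sequence $(a_i)$ in $r_\mu$ and an arbitrary $b$ is a measure $\sigma\in\KM_y(\mu/M)$. By Proposition \ref{prop:SSA}, what must be shown is $\lim_i\sigma|_{x_iy}=\mu\otimes\sigma|_y$, where $\sigma|_y$ is an arbitrary measure.
\item If you disintegrate $\sigma$ over $\sigma|_y$, the conditional measures on $\bfx$ need not be Keisler--Morley measures in $\mu$. Only their average is $\mu^{(\omega)}|_M$, because $b$ may be correlated with the sequence. For example, take $y=x_0$: the fiber over $q$ then has $x_0$-marginal $q$, not $\mu|_M$.
\item Nor do fibers of a realization of $r^{(n)}_\mu$ ``realize $\mu^{(n)}$-Morley sequences''. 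So self-averaging simply does not apply to the pieces.
\end{itemize}

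The missing idea is the paper's symmetrization and extreme-point argument (Theorem \ref{thm:SAtorgs}):
\begin{itemize}
\item Using that $\mu$ is self-commuting (Lemma \ref{lem:SAfam}), average a counterexample $\sigma$ over $\Sym(n)$. Pass to a limit $\sigma'$ in the compact convex set $\mathcal{K}=\kM^{\sym}_{\bfx y}(M)\cap\KM_y(\mu/M)$.
\item On $\mathcal{K}$, the continuous affine functional $\ell_\varphi(\sigma)=\sigma(\varphi(x_0,y))-(\mu\otimes\sigma|_y)(\varphi(x,y))$ still satisfies $\ell_\varphi(\sigma')\geq\e$.
\item A Hewitt--Savage second-moment argument shows $\mu^{(\omega)}|_M$ is extreme among $\fSym(\omega)$-invariant measures (Lemma \ref{lem:ergodic}). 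Hence extreme points of $\mathcal{K}$ are ergodic, and so have $y$-marginal a type.
\item For such a point, Lemma \ref{lem:weird-extension} gives a genuine $\lambda\in\KM(\mu/M)$ with a parameter $b\in\cU^y$. By invariance, $\lambda(\varphi(x_i,b))$ is constant in $i$, so self-averaging forces $\ell_\varphi=0$.
\item The Bauer maximum principle, applied to $\pm\ell_\varphi$, spreads $\ell_\varphi=0$ to all of $\mathcal{K}$, contradicting $\ell_\varphi(\sigma')\geq\e$.
\end{itemize}
Nothing in your plan plays the role of this ergodic decomposition.

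\textbf{On $(ii)\Rightarrow(i)$.} By specializing $b$ to constants $\tilde b$ with $b\in\cU$, you discard exactly the uniformity that fim needs. For each constant $b$, Chebyshev gives a large set of good $\omega$, but that set depends on $b$. What is needed is that $\Eb[\sup_y|D^\varphi_{\mu,n}(\bar a,y)|]$ is small.

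The fix is to keep the parameters random and use the perfect witness property (Fact \ref{fact:randomizationsb}); this underlies Proposition \ref{prop:ABC}$(a)$. With target $\sup_y\Eb[|D^\varphi_{\mu,n}(\xbar,y)|]\to0$ along $r^{(n)}_\mu$, your variance idea does go through, by a route genuinely different from the paper's:
\begin{itemize}
\item Apply the uniform (fim) form of generic stability of $r_\mu$ to $\Eb[\varphi(x,y)\varphi(x',y)]$ with random parameters $(x',y)=(a_j,b)$, and to $\Eb[\varphi(x,y)F^\varphi_\mu(y)]$.
\item Compute the definitions of both via $(\ast)$ of Fact \ref{fact:rtrans}. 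This yields $\sup_b\Eb[D^\varphi_{\mu,n}(\bar a,b)^2]\to0$.
\item Cauchy--Schwarz then gives $\sup_b\Eb[|D^\varphi_{\mu,n}(\bar a,b)|]\to0$, which is the criterion of Proposition \ref{prop:ABC}$(a)$.
\end{itemize}
The paper instead uses a coding trick (Theorem \ref{thm:rgstofim}). It takes $\hat\varphi(x;y,z)$ equal to $\varphi$ or $1-\varphi$ according to $z$, and a random $c$ equal to $c_0$ on $\{D^\varphi_{\mu,n}\geq0\}$ and $c_1$ elsewhere. This gives $\Eb[|D^\varphi_{\mu,n}(\bar a,b)|]=|\Eb[D^{\hat\varphi}_{\mu,n}(\bar a;b,c)]|$, reducing directly to Proposition \ref{prop:ABC}$(b)$. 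Either way, no ``de-randomization'' of $\theta_n$ into an $\cL_M$-formula is needed.
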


 The definitions of generic stability for types and self-averaging for measures will be recalled below (see Definitions \ref{def:gs} and \ref{def:SA}, respectively). On the other hand, the precise definition of frequency interpretation measures will not be  needed for any results here, and thus we omit it and  refer the reader to \cite{HPS,CGH2,AndGS}. We also note that in \cite{Khanaki}, $\mu$ is called ``independently randomly generically stable" (\emph{irgs}) if it satisfies condition $(ii)$.

The implications $(i)\Rightarrow(ii)\Rightarrow(iii)$ in Theorem \ref{thm:main} were proved by the authors in \cite{CGH2} for discrete $T$. We further proved that $(iii)$ implies $\mu$ is finitely approximable (\emph{fam}) over $M$ (so, in particular, definable), and we verified that for types, $(i)$, $(ii)$, and $(iii)$ are all equivalent to generic stability (as originally defined in \cite{PiTa}). But we were unable to reverse either $(i)\Rightarrow (ii)$ or $(ii)\Rightarrow (iii)$ for arbitrary measures.

 Here we will extend $(i)\Rightarrow(ii)\Rightarrow(iii)$ to continuous $T$, and also prove $(iii)\Rightarrow (ii)\Rightarrow (i)$ for $T$ either discrete or continuous. It turns out that $(ii)\Rightarrow (i)$ follows from the main results in \cite{CGH2} using a straightforward coding trick. On the other hand, the proof of $(iii)\Rightarrow (ii)$ requires  substantial new arguments, which draw from established results in probability theory and dynamics on convex sets \cite{HewSav, deLaRue, Ryzh}.

The proposal to use \emph{fim} as the definition of generic stability for Keisler measures was tentatively suggested in \cite[Section 3]{CoGa} and \cite[Section 6]{CoGaHa}, and implicitly adopted in \cite[Section 3]{CoGaHa}. 
In light of Theorem \ref{thm:main}, we now make this definition explicit. 

\begin{definition}\label{def:gsm}
Suppose $\mu\in\kM_x(\cU)$ is Borel-definable over $M\prec\cU$. Then $\mu$ is \textbf{generically stable over $M$} if it satisfies the equivalent properties of Theorem \ref{thm:main}.
\end{definition}

 We now state the second main result of this paper.

 \begin{theorem}\label{thm:MP}
 Suppose $\mu\in\kM_x(\cU)$ and $\nu\in\kM_y(\cU)$ are generically stable over $M\prec\cU$. Then $\mu\otimes\nu$ is generically stable over $M$.
 \end{theorem}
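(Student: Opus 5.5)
We plan to prove Theorem \ref{thm:MP} through characterization $(ii)$ of Theorem \ref{thm:main}, so that the question about measures becomes a question about types in the randomization $T^R$, where generic stability of types is better behaved. Both $\mu$ and $\nu$ are generically stable over $M$, hence definable over $M$. Consequently $\mu\otimes\nu$ is definable over $M$, since Morley products of definable measures are definable. The random extensions $r_\mu\in S_x(\Cc)$ and $r_\nu\in S_y(\Cc)$ are then $M^\Omega$-definable and generically stable over $M^\Omega$.

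The first step is a compatibility lemma: $r_{\mu\otimes\nu}=r_\mu\otimes r_\nu$ as types over $\Cc$. To prove it, we compute the defining schema on both sides. For a formula $\varphi(x,y,\bar z)$, Ben Yaacov's description of $r_\mu$ gives the value of $\Eb[\varphi(x,y,\bar c)]$ in $r_\mu\otimes r_\nu$ as follows. First realize $b\models r_\nu$. Then evaluate the $M^\Omega$-definition of $r_\mu$, which is essentially $\omega\mapsto\mu(\varphi(x,b(\omega),\bar c(\omega)))$ integrated over $\Omega$. Finally integrate against $r_\nu$. By Fubini and the defining property of $\mu\otimes\nu$, this equals $\int \varphi(x,y,\bar c)\,d(\mu\otimes\nu)$ integrated over $\Omega$, which is the definition of $r_{\mu\otimes\nu}$. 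Uniqueness in Ben Yaacov's result then identifies the two types: both are $M^\Omega$-definable and extend the same data on $\cU^\Omega$. This can be checked first on $\cU^\Omega$ and then transferred to $\Cc$ by definability.

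The second step is to show that a Morley product of two generically stable types in $T^R$, both over $M^\Omega$, is again generically stable over $M^\Omega$. This is the main obstacle: the abstract notes that closure under products was open even for types in general theories. We would exploit a special feature of $T^R$: by the results of \cite{CGH2}, generically stable types there coincide with self-averaging ones, or equivalently with fim Dirac measures. The strategy is then to prove directly that fim, for $\mu$ and $\nu$ themselves, is preserved under $\otimes$.

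Concretely, take fim approximating formulas $\theta^\mu_n(x_1,\dots,x_n)$ and $\theta^\nu_n(y_1,\dots,y_n)$. We would show that
\[
\theta_n(\bar x,\bar y)=\theta^\mu_n(\bar x)\wedge\theta^\nu_n(\bar y)
\]
works for $\mu\otimes\nu$ on the diagonal pairs $(x_i,y_i)$. Two things must be verified. First, uniform approximation: for $(\bar a,\bar b)\models\theta_n$, the average over $i$ of $\varphi(a_i,b_i,c)$ must be close to $(\mu\otimes\nu)(\varphi(x,y,c))$. Here the difficulty is that averaging along the diagonal differs from the double average over $i,j$. We would address this by replacing the diagonal with the grid $\{(a_i,b_j)\}$, using $\theta_n$ for $n=m^2$ together with a generic-stability symmetry argument to move between the two. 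Second, the mass condition: $(\mu\otimes\nu)^{(n)}(\theta_n)\to1$. This follows because $(\mu\otimes\nu)^{(n)}$ is a permutation of $\mu^{(n)}\otimes\nu^{(n)}$, which is legitimate because fim measures commute with definable measures. If this direct route stalls, the fallback is to run the same computation for $r_\mu,r_\nu$ in $T^R$ and invoke $(iii)\Rightarrow(ii)$, reducing everything to self-averaging: $\mu\otimes\nu$ is self-averaging because averages of independent $\mu\otimes\nu$-sequences factor, via commutation, into iterated averages, each controlled by the self-averaging of $\mu$ and of $\nu$.
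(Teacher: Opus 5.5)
\textbf{There is a genuine gap: Step 2 is never carried out, and the routes you sketch for it do not work.}

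Your Step 1 is exactly Proposition \ref{prop:prod-same-1}. The outer frame (pass to $r_\mu,r_\nu$, show $r_\mu\otimes r_\nu$ is generically stable over $M^\Omega$, pull back) is how the paper concludes. But Step 2 carries all the content, and as written it is circular. Knowing that $r_\mu,r_\nu$ are \fim\ Dirac measures in $T^R$ does not make products easier. Moreover, ``prove directly that \fim\ for $\mu$ and $\nu$ is preserved under $\otimes$'' is, via Theorem \ref{thm:main}$(i)$, just Theorem \ref{thm:MP} again, so Step 1 and the passage to $T^R$ do no work.

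The direct attempt fails for three reasons.
\begin{enumerate}
\item A formula $\theta^\mu_n(\bar x)\wedge\theta^\nu_n(\bar y)$ in separated variables cannot control diagonal averages. In the theory of an infinite set, let $\mu=\nu=p$ be the non-realized global $1$-type and let $\varphi(x,y)$ be $x=y$, so $(\mu\otimes\nu)(\varphi)=0$. Since $\theta^\mu_n(\bar x)\wedge\theta^\nu_n(\bar x)$ has positive $p^{(n)}$-measure for large $n$, it has a realization $\bar a$. Then $(\bar a,\bar a)$ satisfies your $\theta_n$, yet its diagonal average of $\varphi$ is $1$.
\item The grid does control averages: for $\bar a\models\theta^\mu_m$ and $\bar b\models\theta^\nu_m$ we get $\frac{1}{m^2}\sum_{i,j}\varphi(a_i,b_j,c)\approx\int F^\varphi_\mu(y,c)\,d\nu$. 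However, a formula forcing $m^2$ pairs to form a grid forces $x_{(i,j)}=x_{(i,j')}$, which has measure $0$ in this example. So the mass condition is lost.
\item Symmetry of $\mu^{(n)}\otimes\nu^{(n)}$ under permuting the $y$'s only writes the grid average as the average, over permutations $\pi$, of the permuted diagonal averages $\frac1n\sum_i\varphi(a_i,b_{\pi(i)},c)$. Convexity then bounds the grid error (uniform in $c$) by the diagonal errors, which is the wrong direction.
\end{enumerate}
The self-averaging fallback hides the same diagonal problem. Proposition \ref{prop:SSA}$(ii)$ for $\mu$, with the $y_j$ and the parameter treated as random parameters, gives convergence at $(x_i,y_j)$ as $i\to\infty$ for each fixed $j$. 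It gives no uniformity that would let you set $j=i$. For types the diagonal statement is actually false unless the $a_i$ are Morley over the $b$'s: take $b_i=a_i$ and the formula $x=y$.

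\textbf{The missing idea} is the diagonal convergence result, Lemma \ref{lem:MPtypes}: if $p$ is generically stable over $M$, $\tp(b_i/\cU)\to q$, and $(a_i)_{i<\omega}$ is a Morley sequence in $p$ over $Mb_{<\omega}$, then $\tp(a_ib_i/\cU)\to p\otimes q$. Together with Fact \ref{fact:commute} this gives Theorem \ref{thm:MPtypes}. That fact uses commutation to show that in a Morley sequence of $p\otimes q$, the $a_i$ form a Morley sequence over $Mb_{<\omega}$. The paper then applies Theorem \ref{thm:MPtypes} in $T^R$ to $r_\mu,r_\nu$ and pulls back via Proposition \ref{prop:prod-same-1}, as in your outline.

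The proof of the lemma proceeds by induction on $n$ over the classes $\mathcal{P}_n$ of nonnegative $M$-definable predicates $\theta$ with $\theta^n(p^{(n)})=0$. This is the ``fewer than $n$ exceptions'' form of generic stability. In the inductive step, indiscernibility of $\{a_i\}$ over $Mb_{<\omega}$ is used to build automorphic copies $c_t$ of the parameter that fix the even-indexed $a_i$ and shift the odd-indexed ones. Convergence of $\tp(b_i/\cU)$ then transfers what is learned at even indices to odd ones. Nothing in your proposal plays this role.
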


 Despite several previous attacks, this problem has remained open for arbitrary theories even in the case of types. We will describe the history and previous progress at the start of Section \ref{sec:MP}. To prove Theorem \ref{thm:MP}, we will first separately handle the case of types (see Theorem \ref{thm:MPtypes}). The general result for measures will then follow using  the characterization of generic stability provided by Theorem \ref{thm:main}$(ii)$.

\subsection*{Outline of the paper}
In Section \ref{sec:rando}, we recall the basic setup of randomizations of continuous theories and Ben Yaacov's transfer map. 
The proof of Theorem \ref{thm:main} is given in Sections \ref{sec:rgsfim} and \ref{sec:sargs}. In particular, $(i)\Rightarrow (ii)$ is Corollary \ref{cor:fimtorgs},  $(ii)\Rightarrow(i)$ is Theorem \ref{thm:rgstofim}, $(ii)\Rightarrow (iii)$ is Corollary \ref{cor:rgstoSA}, and $(iii)\Rightarrow (ii)$ is Theorem \ref{thm:SAtorgs}. Theorem \ref{thm:MP} is then proved in Section \ref{sec:MP} (although we note that the results of Section \ref{sec:sargs} are not needed for this proof). Section \ref{sec:more} contains some further results and remarks.  Theorem \ref{thm:COP} characterizes generic stability for measures in terms of an order property condition reminiscent of standard results for types. In Subsection \ref{sec:rgs} we discuss a recent related preprint of Khanaki \cite{Khanaki}. Finally, in Appendix \ref{sec:appendix}, we provide a direct translation of several results from \cite{CGH2} for discrete $T$ to the case where $T$ is continuous. 

\subsection*{AI Acknowledgment} 
A proof of Theorem \ref{thm:main}[$(iii)\Rightarrow(ii)\Rightarrow(i)$] in the discrete case was initially obtained from a ChatGPT 5.5 query. We were also able to independently find proofs  using Kimi K3 and Claude Fable 5. These arguments were heavily reorganized and rewritten by the authors with further assistance from ChatGPT 5.6 Sol. Theorem \ref{thm:COP} was obtained by the authors by modifying a different result found by ChatGPT 5.6 Sol while attempting to answer  Question \ref{ques:OP} (see further remarks in Subsection \ref{sec:probs}). In the first version of this paper, Theorem \ref{thm:MP} was stated as a question, which  eluded the authors and ChatGPT for some time. An initial query with ChatGPT 6 Astra remained unsuccessful. Motivated by discussion with the first author, Caroline Terry attempted several prompt approaches with Astra, one of which successfully resulted in a  proof for types in discrete $T$, which was based on our previous work on ``stable \emph{ict}-patterns" \cite{CGH2}. After this, the third author separately obtained a more direct proof from Astra with a goal prompt in the Codex CLI. This is the proof that we have simplified and rewritten here. Nevertheless, we thank Caroline Terry for her persistence.

 \section{Randomizations and the transfer map}\label{sec:rando}
 Let $\mathcal{L}$ be a language in continuous logic and let $T$ be a complete first-order $\mathcal{L}$-theory.
For the most part, we will omit standard definitions regarding Keisler measures and Morley products. The reader is referred to previous papers by the authors such as \cite{CoGa}, \cite{CoGaHa} and \cite{CGH2} (which are written for discrete theories), as well as work of Anderson \cite{AndGS} where  many of these basic tools are adapted to continuous logic. Specific references will be given below when necessary. Unless otherwise stated, we will follow \cite{CGH2} for all definitions and notation. In fact, the reader should treat this paper as a direct continuation of  \cite{CGH2}.

Given $M\models T$, we let $\kM_x(M)$ denote the space of Keisler measures over $M$ in variables $x$. 
We will tacitly use the fact that definability of Keisler measures is preserved by Morley products, and that associativity holds for Morley products of definable Keisler measures. This is proved for discrete $T$ in  \cite[Proposition 2.6]{CoGa} and adapted to continuous $T$ in \cite[Lemma 2.9]{AndGS}.

  We let $T^{R}$ denote the theory of the randomization of $T$. 
   We briefly describe the construction of a model $M^\Omega\models T^R$ from a model $M\models T$, where $\Omega=(\Omega,\mathcal{B},\mathbb{P})$ is an atomless probability space. First, $M_0^\Omega$ is defined to be the set of $\cB$-measurable functions from $\Omega$ to $M$ with finite image, equipped with the pseudometric $d^\Omega(f,g)=\int_{t \in \Omega} d(f(t),g(t))\,d\mathbb{P}$, where $d$ is the underlying metric on $M$. Then $M^\Omega$ is the metric completion of  $M_0^\Omega$. 

   It is worth noting that the theory $T^R$ is formally in a two-sorted language. The above description of models of $T^R$ only explicitly describes the primary ``function sort", while the second auxiliary sort is occupied by the space of $\mathcal{B}$-measurable functions from $\Omega$ to $[0,1]$. See \cite{BYRV} for details.\footnote{When $T$ is discrete, the second sort is usually presented as just the algebra $\cB$; see \cite{BYKR}.}
     As in \cite{CGH2}, we will for the most part suppress this second  sort.  
   
   Next we recall the  interpretation of basic formulas in models of $T^R$. In particular, for any $M\models T$ and  $\mathcal{L}$-formula $\varphi(x_1,\ldots,x_{n})$, the $\cL^R$-formula  $\Eb[\varphi(x_1,\ldots,x_{n})]$ is evaluated on tuples $\bar{h} = (h_1,\ldots,h_n)$ from $M^{\Omega}_0$ via 
\[
    \Eb[\varphi(\bar{h})] = \int_\Omega \varphi(h_1(t),\ldots,h_n(t))\,d\mathbb{P}(t), 
\]
and is extended to $M^{\Omega}$ by uniform limits. We recall the following quantifier elimination result for $T^R$ (see \cite[Theorem 3.32$(i)$]{BYRV}).

\begin{fact}\label{fact:randomizationsQE}
For any $\bar{h}$ in $\mathcal{C}$, $\tp(\bar{h})$ is uniquely determined by the values of $\Eb[\varphi(\bar{h})]$ for $\cL$-formulas $\varphi(\bar{x})$.  
\end{fact}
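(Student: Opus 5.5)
The plan is to prove this by a back-and-forth argument in the saturated model $\Cc$, rather than by showing syntactically that every $\cL^R$-formula is a uniform limit of continuous combinations of the formulas $\Eb[\varphi]$. Fix tuples $\bar{h}=(h_1,\ldots,h_n)$ and $\bar{h}'=(h'_1,\ldots,h'_n)$ in $\Cc$ with $\Eb[\varphi(\bar{h})]=\Eb[\varphi(\bar{h}')]$ for every $\cL$-formula $\varphi(\bar{x})$. Call these values the \emph{$\Eb$-type} of a tuple. The system of partial maps will consist of all maps $\bar{h}\mapsto\bar{h}'$ between small tuples with equal $\Eb$-types. We then show this system has the back-and-forth property; by saturation and homogeneity of $\Cc$, this gives $\tp(\bar{h})=\tp(\bar{h}')$.

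First I would reinterpret the $\Eb$-type. The functional $\varphi\mapsto\Eb[\varphi(\bar{h})]$ is positive, linear and normalized on continuous functions on $S_n(T)$. By Riesz it is a regular Borel probability measure $\mu_{\bar h}$ on $S_n(T)$, namely the ``distribution'' of $t\mapsto\tp(\bar{h}(t))$. So the hypothesis says exactly $\mu_{\bar h}=\mu_{\bar h'}$.

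Two structural features of $T^R$ will be used.
\begin{enumerate}[$(a)$]
\item \emph{Events.} For each formula $\varphi$ there is, in the auxiliary sort, an element $[\![\varphi(\bar h)]\!]$ whose expectation is $\Eb[\varphi(\bar h)]$.
\item \emph{Gluing (fullness).} For elements $f_1,f_2$ of the function sort and an event $B$, there is an $f$ with $f=f_1$ on $B$ and $f=f_2$ off $B$.
\end{enumerate}
I take both from the axioms of $T^R$ as in \cite{BYRV}.

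For the forth step, take $g\in\Cc$; we must find $g'$ with $\mu_{\bar h g}=\mu_{\bar h' g'}$. By saturation, it suffices to achieve this approximately: for finitely many formulas $\psi_1,\dots,\psi_k(\bar x,y)$ and $\e>0$, find $g'$ with $|\Eb[\psi_j(\bar h,g)]-\Eb[\psi_j(\bar h',g')]|<\e$ for all $j$. Partition $S_n(T)$ into finitely many Borel (indeed definable, by cutting with formulas at levels) pieces $P_1,\dots,P_m$, fine enough that for each $i$ and $j$ the value $\sup_y\psi_j(\bar a,y)$ varies by less than $\e$ over types $\bar a\in P_i$. Let $B_i$ and $B'_i$ be the corresponding events for $\bar h$ and $\bar h'$ from $(a)$; they have equal probabilities because $\mu_{\bar h}=\mu_{\bar h'}$.

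Next, refine further according to the conditional distribution of $\tp(\bar h(t),g(t))$ over each $P_i$. For each resulting cell, fiberwise in $\cU$ (or in $\Cc$, using saturation) choose a witness $c$ realizing the appropriate type over the relevant fiber. Atomlessness of the probability algebra lets us split each $B'_i$ into subevents with prescribed probabilities, matching those of the refined cells. Gluing the finitely many witnesses along this finite partition via $(b)$ produces $g'$. Both $\Eb[\psi_j(\bar h,g)]$ and $\Eb[\psi_j(\bar h',g')]$ are then within $O(\e)$ of the same finite sum. The back step is symmetric, and auxiliary-sort parameters can be carried along the same way, by including them as additional events in the partition.

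I expect the main obstacle to be the fiberwise choice of $g'$. The value $\bar h'(t)$ is not literally a point of a model of $T$ indexed by $t$, since elements of $\Cc$ are not honest functions. So ``choose $c$ realizing a type over $\bar h'(t)$'' must be phrased through the axioms of $T^R$: there should be an axiom scheme saying that, on the event where $\sup_y\psi(\bar x,y)$ is large, there is a witness $y$ attaining it up to $\e$. This axiom is what lets one replace pointwise choice by a single element of $\Cc$. I would verify this witness axiom first, presumably from the ``fullness'' axioms in \cite{BYRV}. With it in hand, the remaining work is bookkeeping of the $\e$-approximations.
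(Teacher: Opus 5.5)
\paragraph{Comparison with the paper.} The paper does not prove this fact; it quotes it from Ben Yaacov \cite[Theorem 3.32$(i)$]{BYRV}, i.e.\ quantifier elimination for $T^R$. Your plan is therefore an independent reproof of that theorem from the axioms of $T^R$, by back-and-forth in $\Cc$. The architecture is reasonable. Equal $\Eb$-values give equal quantifier-free types, and saturation reduces the forth step to matching finitely many $\Eb[\psi_j(\bar h,g)]$ up to $\e$. The ``witness axiom'' you single out is exactly the perfect-witness identity $\tsup{y}\Eb[\psi(\bar x,y)]=\Eb[\tsup{y}\psi(\bar x,y)]$ (Fact~\ref{fact:randomizationsb}, which transfers from $\cU^\Omega$ to $\Cc$). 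This identity already removes the fiberwise choice. A single $g'$ for which $\Eb[\tsup{y}\psi(\bar h',y)]-\Eb[\psi(\bar h',g')]$ is small is a good witness on every subevent at once, because $\psi(\bar h',g')\le\tsup{y}\psi(\bar h',y)$ pointwise.

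\paragraph{The gap.} The forth step as you set it up does not work, because your partition criterion is the wrong one. You make each $\tsup{y}\psi_j$ nearly constant on each piece $P_i$, one formula at a time. You then split $B'_i$ using only the cell probabilities. But the witness on a refined cell must attain the \emph{joint} value of $(\psi_1,\ldots,\psi_k)$ prescribed by that cell, and the individual suprema do not detect whether that joint value is attainable over a given $\bar a$.

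For example, suppose:
\begin{itemize}
\item $\psi_1,\psi_2$ are $\{0,1\}$-valued with $\tsup{y}\psi_1=\tsup{y}\psi_2=1$ identically;
\item $\tsup{y}\min(\psi_1,\psi_2)(\bar x,y)$ is $1$ on types satisfying some $\rho(\bar x)$ and $0$ elsewhere;
\item $\mathbb{P}(\rho(\bar h))=\frac12$, and $g$ realizes $(\psi_1,\psi_2)=(1,1)$ on $\rho(\bar h)$ and $(1,0)$ off it.
\end{itemize}
Your criterion permits the single piece $P_1=S_n(T)$. A split of $B'_1$ into two halves with the right probabilities need not align with $\rho(\bar h')$. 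On the part of the $(1,1)$-half where $\rho(\bar h')$ fails, there is no witness at all. Atomlessness only matches probabilities; it cannot produce this alignment.

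\paragraph{The repair.} The fix stays within your framework: partition according to attainability of the joint cells.
\begin{itemize}
\item Cover $[0,1]^k$ by finitely many sets $C_l$ of diameter $<\e$.
\item Put $\chi_l(\bar x,y)=\max\{0,\,1-\e^{-1}\operatorname{dist}((\psi_1,\ldots,\psi_k)(\bar x,y),C_l)\}$ and $\theta_l=\tsup{y}\chi_l$.
\item Choose the $P_i$ so that every $\theta_l$ varies by less than $\delta$ on each $P_i$.
\end{itemize}
Now suppose cell $l$ has positive mass over $P_i$ on the $\bar h$ side. Then $\theta_l=1$ on a $\mu_{\bar h}$-non-null set of types in $P_i$, hence $\theta_l>1-\delta$ on all of $P_i$. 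So one perfect witness $g'_l$ for $\chi_l$ serves on the whole subevent of $B'_i$ assigned to cell $l$. Gluing the $g'_l$ then produces $g'$.

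\paragraph{A further point.} $\cL^R$-formulas also quantify over the auxiliary sort. So your back-and-forth system must contain mixed tuples and needs an extension step for new auxiliary-sort elements, not just auxiliary parameters. That step uses only atomlessness: split the cells of the joint law and make the new random variable constant on the pieces. It should still be stated.
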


For the rest of this section, we use $x$ for a possibly infinite tuple of variables. We write $S_x(-)$ for type spaces in $T$ and $S^R_x(-)$ for type spaces in $T^R$.

Given $M\models T$ and $m\in M$, let $\tilde{m}\in M^\Omega$ be represented by the constant function in $M_0^\Omega$ with value $m$. Let $M^c=\{\tilde{m}:m\in M\}\seq M^\Omega$. The following is a restatement of \cite[Theorem 3.32$(ii)$]{BYRV} suitable for our setting. 

\begin{fact}\label{fact:nup}
Given $M\models T$, any type $p\in S_x^R(M^c)$ determines a unique measure $\nu_p\in \kM_x(M)$ defined so that, for any $\cL_M$-formula $\theta(x)$,
\[
\nu_p(\theta(x))=(\Eb[\theta(x)])^p.
\]
Moreover, the map $p\mapsto\nu_p$ from $S_x^R(M^c)$ to $\kM_x(M)$ is a homeomorphism.
\end{fact}
\begin{proof}[Explanation]
Let $T_M$ be the theory $T$ with constants for $M$. In the randomization $T^R_M$, the constant symbol for an element $m\in M$ is interpreted as $\tilde{m}$, which provides a copy of $M^c$ in any model of $T^R_M$. Applying  \cite[Theorem 3.32$(ii)$]{BYRV} to $T_M$ and $T_M^R$ yields the statement.
\end{proof}

The previous fact allows us to extend the $\Eb$ map to arbitrary continuous functions. In particular, given $M\models T$ and a continuous function $F\colon S_x(M)\to\R$, we have a continuous function $\Eb[F]\colon S^R_x(M^\Omega)\to \R$ such that, given $p\in S^R_x(M^\Omega)$,
\[
\Eb[F](p)=\int_{S_x(M)}F\,d\nu_{p|_{M^c}}.
\]
Note that $\Eb[F]$ is continuous since the maps $p \mapsto \nu_{p|_{M^c}}$ and $\nu \mapsto \int F d\nu$ are continuous (where $\mathfrak{M}_{x}(M)$ is equipped with the weak$^*$-topology).

Fact \ref{fact:nup} also motivates the following terminology.

\begin{definition}\label{def:extend}
Fix $M\models T$ and $\mu\in\kM_x(M)$. Given $\mathcal{N}\succeq M^\Omega$ and $p\in S^R_x(\mathcal{N})$, we say that $p$ \textbf{extends} $\mu$ if $\nu_{p|_{M^c}}=\mu$, i.e., for any $\cL_M$-formula $\theta(x)$,
\[
\mu(\theta(x))=(\Eb[\theta(x)])^p.
\]
\end{definition}

In the context of the previous definition, note that we may view $\mu$ as a type in $S^R_x(M^c)$ (via Fact \ref{fact:nup}), and  $p$ extends $\mu$ if and only if it extends (in the usual sense) this type associated to $\mu$. This terminology has precedents in  \cite{BYT,Gannon-note,Khanaki}.\footnote{In \cite{Khanaki}, Khanaki uses ``randomly extends"; we drop ``randomly" for the sake of brevity.} 

Finally, we recall Ben Yaacov's transfer map from \cite{BYT}. As in \cite{CGH2}, we let $\cU$ denote a monster model of $T$, and we let $\mathcal{C}$ denote a monster model of $T^R$ with $\mathcal{U}^\Omega\preceq \mathcal{C}$. 

\begin{fact}\label{fact:rtrans}
Suppose $\mu\in\kM_x(\cU)$ is definable over $M\prec\cU$. Then there is a unique $M^\Omega$-definable type $r_\mu\in S^R_x(\mathcal{C})$ such that for any $\cL_M$-formula $\varphi(x,y)$, 
\begin{equation*}
F^{\Eb[\varphi(x,y)]}_{r_\mu}=\Eb[F^{\varphi(x,y)}_\mu].\tag{$\ast$}
\end{equation*}
In particular, $r_\mu$ extends $\mu$. 
 Moreover, for any $n\geq 1$, we have $r^{(n)}_\mu=r_{\mu^{(n)}}$. 
\end{fact}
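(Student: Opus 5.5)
We will build $r_\mu$ by specifying its defining schema and then checking that it is a consistent, complete type. By Fact \ref{fact:randomizationsQE}, a type over $\mathcal{C}$ is determined by the values of $\Eb[\varphi(x,\bar h)]$ for $\cL$-formulas $\varphi(x,y)$ and tuples $\bar h$ from $\mathcal{C}$. So we set
\[
\Eb[\varphi(x,\bar h)]^{r_\mu} := \Eb[F^{\varphi}_\mu](\tp(\bar h/M^\Omega)).
\]
Here $F^{\varphi}_\mu\colon S_y(M)\to\R$ is continuous because $\mu$ is definable over $M$. Consequently $\Eb[F^\varphi_\mu]$ is a continuous function on $S^R_y(M^\Omega)$, as noted after Fact \ref{fact:nup}. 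This formula makes $(\ast)$ hold by design, and any $M^\Omega$-definable type satisfying $(\ast)$ must be this one. Uniqueness is therefore immediate once we know that $(\ast)$ pins down all formulas, which Fact \ref{fact:randomizationsQE} guarantees.

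The main step is consistency: the prescribed values must be realized. For this we work on $\mathcal{U}^\Omega$ first. For $\bar h\in\mathcal{U}^\Omega_0$ with finite image, $\Omega$ splits into finitely many pieces $A_i$ on which $\bar h$ equals some constant $\bar b_i$. For a finite set of formulas and an $\e>0$, we approximate $\mu$, restricted to those finitely many parameter instances, by an average $\frac1n\sum_j \delta_{a_j}$; such an approximation exists because $\mu$ is a measure, so it lies in the closed convex hull of types. We then need a realization over all these parameters at once. To get it, we replace $\Omega$ by $\Omega\times[0,1]$ and choose the realizing element $g$ to be independent of the parameters. Concretely, $g(t,s)$ is sampled from $\mu$, using an atomless extension and the definability of $\mu$ to make the sampling measurable. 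This gives $\Eb[\varphi(g,\bar h)]\approx\int \mu(\varphi(x,\bar h(t)))\,dt$, which is exactly the prescribed value. By compactness (saturation of $\mathcal{C}$), finite satisfiability of the schema at each finite stage yields a type over $\mathcal{U}^\Omega$. We then extend to all of $\mathcal{C}$ using the defining schema: the defining functions are continuous and $\mathcal{U}^\Omega$ is a model, so the schema defines a complete type over $\mathcal{C}$ whenever it is consistent over a model. We expect this step to be the main obstacle, specifically justifying that the ``independent sample'' element exists inside a model of $T^R$ with exact, not merely approximate, expectations. If the direct construction proves awkward, we will instead cite Ben Yaacov's transfer result \cite{BYT} for existence.

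Finally, $r_\mu$ extends $\mu$: take $\bar h=\tilde{\bar b}$ to be constants from $M^c$. Then $\Eb[F^\varphi_\mu](\tp(\tilde{\bar b}))=\mu(\varphi(x,\bar b))$, which is what Definition \ref{def:extend} requires. For the ``moreover'' clause we use induction on $n$. Both $r_\mu^{(n)}$ and $r_{\mu^{(n)}}$ are $M^\Omega$-definable, so by uniqueness it suffices to check that $r_\mu^{(n)}$ satisfies $(\ast)$ for $\mu^{(n)}$. We unfold the Morley product: $F^{\Eb[\varphi]}_{r_\mu\otimes r_\mu^{(n-1)}}$ is obtained by integrating, first via the $r_\mu$-definition and then via the $r^{(n-1)}_\mu=r_{\mu^{(n-1)}}$-definition. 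Since $\Eb$ commutes with these fiberwise integrals (Fubini on $\Omega$ combined with the defining formula), the result equals $\Eb[F^\varphi_{\mu\otimes\mu^{(n-1)}}]$.
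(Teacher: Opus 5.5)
Your outline is sound, but it is organized differently from the paper. The paper does not argue from scratch; it transfers the proofs of \cite{CGH2} line by line:
\begin{enumerate}[$(1)$]
\item It first obtains a $\mathcal{U}^\Omega$-definable type whose definitions are the functions $\Eb[F^\varphi_\mu]$ of Lemma~\ref{lem:randomization-of-continuous-function} (see Fact~\ref{fact:unique-definable-transfer}).
\item It then proves $M^\Omega$-invariance separately, by approximating $F^\varphi_\mu$ with $\cL_M$-formulas.
\item Finally, it gets the moreover clause from $r_{\mu\otimes\nu}=r_\mu\otimes r_\nu$ (Proposition~\ref{prop:prod-same-1}). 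That step goes through the approximation Lemmas~\ref{random:1} and~\ref{lem:independent-limit}: finite-image random variables whose partitions are independent of the parameters' partition, with the induced finitely supported measures converging to $\mu$.
\end{enumerate}
By defining the schema directly on $S^R_y(M^\Omega)$ via Fact~\ref{fact:nup}, you get $M^\Omega$-definability for free and skip step $(2)$. Your finite-stage construction is essentially the one behind Lemma~\ref{lem:independent-limit}, and the definable-extension step over the model $\mathcal{U}^\Omega\preceq\mathcal{C}$ is standard. The obstacle you anticipate is not real: compactness needs only approximate values. Since $\Omega$ is atomless, you can split each $A_i$ into $n$ pieces of measure $\mathbb{P}(A_i)/n$ and put $a_j$ on the $j$th piece. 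So you need no $\Omega\times[0,1]$, no ``sampling from $\mu$'', and no definability of $\mu$ at that step; definability only serves to make the schema continuous.

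The place to be more careful is the moreover clause. ``Fubini on $\Omega$'' is not directly available, because a realization $\bar c$ of $r_{\mu^{(n-1)}}$ is not a random variable on $\Omega$; making that idea literal is exactly what the paper's approximation lemmas do. With your formulation there is a shorter route. Fix $\bar h\in\mathcal{C}$, a realization $\bar c\models r_{\mu^{(n-1)}}$ in an elementary extension, and $\e>0$. Choose an $\cL_M$-formula $\psi(\bar z,y)$ uniformly within $\e$ of $F^{\varphi(x;\bar z,y)}_\mu$ on $S_{\bar z y}(M)$. Then
\[
(\Eb[\varphi(x,\bar z,\bar h)])^{r_\mu\otimes r_{\mu^{(n-1)}}}=\Eb[F^{\varphi}_\mu](\tp(\bar c\bar h/M^\Omega))=\int F^{\varphi}_\mu\,d\nu_{\tp(\bar c\bar h/M^c)},
\]
and this is within $\e$ of $\Eb[\psi(\bar c,\bar h)]=\Eb[F^{\psi}_{\mu^{(n-1)}}](\tp(\bar h/M^\Omega))$, by Fact~\ref{fact:nup} and $(\ast)$ for $r_{\mu^{(n-1)}}$. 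Moreover, $F^{\psi}_{\mu^{(n-1)}}$ is everywhere within $\e$ of $F^{\varphi}_{\mu\otimes\mu^{(n-1)}}$. Uniqueness then finishes the induction. The same computation gives $r_{\mu\otimes\nu}=r_\mu\otimes r_\nu$ for distinct definable $\mu,\nu$, which the paper also needs for Theorem~\ref{thm:MP}. So your route is more self-contained and avoids Lemmas~\ref{random:1} and~\ref{lem:independent-limit} entirely, while the paper's route keeps the continuous case parallel to \cite{CGH2}.
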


A detailed proof of this fact is given in \cite{CGH2} for discrete $T$. Only minimal modifications are required for continuous $T$, which we describe in Subsection \ref{sec:randoproofs}. Note that condition $(\ast)$ is generally  much stronger  than simply saying $r_\mu$ extends $\mu$, which is why we previously referred to $r_\mu$ as a ``canonical extension'' of $\mu$.

\section{From generic stability of $r_\mu$ to \emph{fim} for $\mu$}\label{sec:rgsfim}

We first recall the definition of generic stability for types.

\begin{definition}\label{def:gs}
A global type $p\in S_x(\cU)$ is \textbf{generically stable over $M\prec\cU$} if $p$ is $M$-invariant and, for any Morley sequence $(a_i)_{i<\omega}$ in $p$ over $M$, we have
\[
\lim_{i\to\infty}\tp(a_i/\cU)=p.
\]
\end{definition}

See \cite[Fact 3.4]{CGH2} for various equivalent formulations of generic stability in continuous theories, which are based on previous results in discrete theories from \cite{PiTa} and \cite{CoGa}. We also note that even though the previous definition is formulated in $T$, we can (and will) apply it to global types in $T^R$ over the monster $\Cc$.

 Given a definable measure $\mu\in\kM_x(\cU)$, an $\cL$-formula $\varphi(x,y)$, and an integer $n\geq 1$, define
 \[
 D^\varphi_{\mu,n}(\xbar,y)= \Av(\xbar)(\varphi(x,y))-F^\varphi_\mu(y),
 \]
 where $\xbar=(x_1,\ldots,x_n)$. We view $D^\varphi_{\mu,n}(\xbar,y)$ as a continuous function from $S_{\xbar y}(\cU)$ to $\R$. We now state a result from \cite{CGH2} using this notation. The proof in \cite{CGH2} is for discrete $T$, and the adaptation to continuous $T$ is explained in Subsection \ref{sec:ABC}.

\begin{proposition}[\cite{CGH2}]\label{prop:ABC}
 Suppose $\mu\in\kM_x(\cU)$ is definable over $M\prec\cU$.
\begin{enumerate}[$(a)$]
\item  $\mu$ is \fim\ over $M$ if and only if for every $\cL$-formula $\varphi(x,y)$, 
  \[ 
  \lim_{n \to \infty} \textstyle \left(\sup_y\mathbb{E}[|D^\varphi_{\mu,n}(\xbar,y)|]\right)^{r^{(n)}_\mu} = 0.
  \]
  \item $r_\mu$ is generically stable over $M^\Omega$ if and only if for every $\cL$-formula $\varphi(x,y)$, 
  \[
  \lim_{n\to\infty}\textstyle \left(\sup_y |\mathbb{E} [D^\varphi_{\mu,n}(\xbar,y)]|\right)^{r^{(n)}_\mu}=0.
  \]
  \end{enumerate}
\end{proposition}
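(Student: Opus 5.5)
Both parts will be proved by reducing the $T^R$-side quantities to $\mu^{(n)}$-integrals via Fact \ref{fact:rtrans} ($r^{(n)}_\mu=r_{\mu^{(n)}}$ and condition $(\ast)$). The targets on the $T$-side are two known criteria: the definition of \fim\ through $\mu^{(n)}$-averages of $\sup_b|D^\varphi_{\mu,n}(\xbar,b)|$, and the characterization of generic stability of $r_\mu$ by ``$\lim_i\tp(a_i/\Cc)=r_\mu$'' along Morley sequences.

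For $(a)$, we apply $(\ast)$ to the $n$-fold product. This converts $\bigl(\sup_y\Eb[|D^\varphi_{\mu,n}(\xbar,y)|]\bigr)^{r^{(n)}_\mu}$ into a quantity computed from $\mu^{(n)}$. The point is to identify the supremum over $y\in\Cc$ of the $r_{\mu^{(n)}}$-value of $\Eb[|D(\xbar,y)|]$. Since $y$ ranges over random elements, we get $\sup_{h}\int_\Omega\int|D^\varphi_{\mu,n}(\xbar,h(t))|\,d\mu^{(n)}\,d\mathbb P$. Choosing $h$ pointwise near-optimal (finitely many values suffice by approximation) shows this is the same as
\[
\int \sup_{b}|D^\varphi_{\mu,n}(\xbar,b)|\,d\mu^{(n)}
\]
in the appropriate sense, namely the $\mu^{(n)}$-measure of the sets $\{\xbar:\sup_b|D|>\e\}$ up to $\e$-errors. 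That quantity tending to $0$ is exactly \fim\ (with definability giving measurability of the sup-sets). I expect the key technical step to be the interchange of $\sup_y$ with the integral over $\Omega$, using saturation of $\Cc$ and the fact that $\Omega$ is atomless to realize gluings of witnesses.

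For $(b)$, the plan is to use the equivalence in \cite[Fact 3.4]{CGH2}. For a definable $r$, generic stability is equivalent to the averages over $r^{(n)}$-Morley tuples converging uniformly in the parameter to $F_r$. By quantifier elimination (Fact \ref{fact:randomizationsQE}) it suffices to check this for the formulas $\Eb[\varphi(x,y)]$. The average of $\Eb[\varphi(x_i,y)]$ is $\Eb[\Av(\xbar)(\varphi(x,y))]$, and $F^{\Eb\varphi}_{r_\mu}(y)=\Eb[F^\varphi_\mu(y)]$ by $(\ast)$. Subtracting and using linearity of $\Eb$ gives $\Eb[D^\varphi_{\mu,n}(\xbar,y)]$, so the condition reads $\bigl(\sup_y|\Eb[D^\varphi_{\mu,n}]|\bigr)^{r^{(n)}_\mu}\to0$.

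The main obstacle in $(b)$ is justifying that testing only the basic formulas $\Eb[\varphi]$ suffices. General $\Cc$-formulas also involve the auxiliary sort and connectives. I would handle this by noting that these are uniform limits of continuous combinations of the basic formulas, and that the convergence criterion for generic stability passes to such combinations.
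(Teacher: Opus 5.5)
Your part $(b)$ is fine in outline, but part $(a)$ contains a genuine error in how you evaluate the formula at the type. The expression $\bigl(\sup_y\Eb[|D^\varphi_{\mu,n}(\xbar,y)|]\bigr)^{r^{(n)}_\mu}$ is the value of the predicate $\sup_y\Eb[|D^\varphi_{\mu,n}(\xbar,y)|]$, which is in the variables $\xbar$ only, at a realization $\abar$ of $r^{(n)}_\mu|_{M^\Omega}$. So the witness for $y$ may depend on $\abar$. It is not $\sup_{b\in\Cc}\bigl(\Eb[|D^\varphi_{\mu,n}(\xbar,b)|]\bigr)^{r^{(n)}_\mu}$, which is what $(\ast)$ computes, because there $\abar$ is generic over $b$. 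That second quantity is exactly your $\sup_h\int_\Omega\int|D^\varphi_{\mu,n}(\xbar,h(t))|\,d\mu^{(n)}\,d\mathbb{P}$. Since $h(t)$ cannot depend on $\xbar$, choosing it pointwise near-optimal only gives $\sup_{c\in\cU}\int|D^\varphi_{\mu,n}(\xbar,c)|\,d\mu^{(n)}$, with the supremum outside the integral.

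That quantity tends to $0$ for every definable $\mu$. For fixed $c\in\cU$, the values $\varphi(x_i,c)$ are independent under the global Morley power $\mu^{(n)}$, each with mean $\mu(\varphi(x,c))$. Chebyshev then gives a bound of order $n^{-1/2}$, uniformly in $c$. Concretely, take $\mu=\delta_p$ with $p$ the type at $+\infty$ in DLO, and let $\varphi(x,y)$ be $x>y$. Then your intermediate quantity is $0$ for every $n$, whereas $\int\sup_y|D^\varphi_{\mu,n}|\,d\mu^{(n)}=1$. So your chain of identifications would prove that every definable measure is \fim, which is false.

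The missing idea is to interchange $\sup_y$ and $\Eb$ \emph{before} evaluating at the type, as an identity of predicates in $\xbar$. Use Fact \ref{fact:randomizationsb}, after rescaling $|D^\varphi_{\mu,n}|$ to $[0,1]$. For $\abar\in(\cU^\Omega_0)^{\xbar}$, one glues witnesses $h(t)$ that are nearly optimal for $\abar(t)$; this is exactly where the dependence on $\abar$ enters. The result is $\sup_y\Eb[|D^\varphi_{\mu,n}(\xbar,y)|]=\Eb[\sup_y|D^\varphi_{\mu,n}(\xbar,y)|]$ in $\cU^\Omega$, hence in $\Cc$.

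Since $\mu$ is definable over $M$, $\sup_y|D^\varphi_{\mu,n}(\xbar,y)|$ is a continuous function of $\xbar$. So Corollary \ref{cor:rmu-def} applied to $\mu^{(n)}$, together with $r^{(n)}_\mu=r_{\mu^{(n)}}$, yields $\bigl(\sup_y\Eb[|D^\varphi_{\mu,n}(\xbar,y)|]\bigr)^{r^{(n)}_\mu}=\int\sup_y|D^\varphi_{\mu,n}(\xbar,y)|\,d\mu^{(n)}$. Convergence of this to $0$ is equivalent to \fim. Here you only use that $r_{\mu^{(n)}}$ extends $\mu^{(n)}$, not the parametrized form of $(\ast)$.

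A smaller point in $(b)$: the uniform-average criterion does not pass to nonlinear combinations of basic formulas, so route the reduction through Morley sequences instead. Any $n$ terms of a Morley sequence $(a_i)_{i<\omega}$ in $r_\mu$ realize $r^{(n)}_\mu|_{M^\Omega}$. Hence the criterion for $\Eb[\varphi(x,y)]$ forces $\Eb[\varphi(a_i,b)]\to(\Eb[\varphi(x,b)])^{r_\mu}$ for every $b$. Fact \ref{fact:randomizationsQE} then gives $\tp(a_i/\Cc)\to r_\mu$, as in the proof of Proposition \ref{prop:SSA}.
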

As in \cite{CGH2}, the previous result has the following immediate consequence, which establishes the $(i)\Rightarrow(ii)$ direction of Theorem \ref{thm:main}.

\begin{corollary}[\cite{CGH2}]\label{cor:fimtorgs}
Suppose $\mu\in\kM_x(\cU)$ is fim over $M\prec\cU$. Then $\mu$ is definable over $M$ and  $r_\mu$ is generically stable over $M^\Omega$.
\end{corollary}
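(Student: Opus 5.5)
The plan is to read this off Proposition \ref{prop:ABC} by comparing the two limit conditions pointwise. First, \fim\ measures are definable; in fact they are \emph{fam}, hence definable over $M$. This is standard from \cite{HPS,CGH2}, or in continuous logic \cite{AndGS}. So the hypotheses of Proposition \ref{prop:ABC} are met, and $r_\mu$ and its Morley powers $r^{(n)}_\mu=r_{\mu^{(n)}}$ exist by Fact \ref{fact:rtrans}. It then suffices to show that, for each $\cL$-formula $\varphi(x,y)$ and each $n$,
\[
\textstyle\left(\sup_y |\Eb[D^\varphi_{\mu,n}(\xbar,y)]|\right)^{r^{(n)}_\mu}\leq \left(\sup_y \Eb[|D^\varphi_{\mu,n}(\xbar,y)|]\right)^{r^{(n)}_\mu}.
\]
Given this, the vanishing of the right side as $n\to\infty$ (part $(a)$, since $\mu$ is \fim) forces the vanishing of the left side, and part $(b)$ gives generic stability of $r_\mu$ over $M^\Omega$.

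To establish the inequality, I would work with realizations. Let $\bar h$ realize $r^{(n)}_\mu$ restricted to a suitable small model, or in a further elementary extension of $\Cc$, and let $g$ range over the function sort. Both $\Eb[D]$ and $\Eb[|D|]$ are values of the $\Eb$-map applied to continuous functions on $S_{\xbar y}(M)$, in the sense described after Fact \ref{fact:nup}. For each $g$, they are integrals against the measure $\nu_{\tp(\bar h,g)|_{M^c}}$. The triangle inequality for integrals gives $|\Eb[D](\bar h,g)|\leq \Eb[|D|](\bar h,g)$. Taking the supremum over $g$, and noting that $\sup_y$ is interpreted the same way on both sides, gives the inequality at the level of types.

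The only real care needed is in the continuous setting. $D^\varphi_{\mu,n}$ involves $F^\varphi_\mu$, which is continuous because $\mu$ is definable. So $\Eb[D]$ and $\Eb[|D|]$ are genuine $T^R$-formulas, or at least definable predicates, that are monotone under pointwise comparison. I expect this well-definedness to be the only point requiring attention; the rest is immediate from Proposition \ref{prop:ABC}.
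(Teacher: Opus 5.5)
Your proposal is correct and matches the paper's proof. The paper cites definability of \fim\ measures as standard and then obtains generic stability of $r_\mu$ from Proposition \ref{prop:ABC} via Jensen's inequality, which is exactly your pointwise bound $|\Eb[D^\varphi_{\mu,n}]|\leq\Eb[|D^\varphi_{\mu,n}|]$ followed by taking $\sup_y$; your remarks on realizations and integrals against $\nu_{p|_{M^c}}$ simply make explicit what the paper leaves implicit.
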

\begin{proof}
Definability of \fim\ measures is a standard result (see \cite[Fact 3.4$(a)$]{CGH2} and \cite[Lemma 3.2]{AndGS}). Hence, generic stability of $r_\mu$ follows directly from Proposition \ref{prop:ABC} and Jensen's inequality.
\end{proof}

 While it is less obvious that Proposition \ref{prop:ABC}  also implies the $(ii)\Rightarrow(i)$ direction of Theorem \ref{thm:main}, it turns out that this is indeed the case due to an elementary coding trick. 

\begin{theorem}\label{thm:rgstofim}
Suppose $\mu\in\kM_x(\cU)$ is definable over $M\prec\cU$ and $r_\mu$ is generically stable over $M^\Omega$. Then $\mu$ is $\fim$ over $M$.
\end{theorem}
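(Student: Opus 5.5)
The plan is to derive the criterion in Proposition \ref{prop:ABC}$(a)$ from the criterion in Proposition \ref{prop:ABC}$(b)$, which holds because $r_\mu$ is generically stable. The two quantities differ only by where the absolute value sits: inside the expectation in $(a)$, outside it in $(b)$. The coding trick is to absorb the sign of $D^\varphi_{\mu,n}$ into extra parameter variables. Then a clever choice of a random parameter makes $\Eb[D^\psi_{\mu,n}]$, for a new formula $\psi$, equal to $\Eb[|D^\varphi_{\mu,n}|]$.

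Concretely, fix an $\cL$-formula $\varphi(x,y)$ and define
\[
\psi(x;y,z_1,z_2)=\big(1-d(z_1,z_2)\big)\varphi(x,y)+d(z_1,z_2)\big(1-\varphi(x,y)\big),
\]
or in discrete logic $(z_1=z_2\wedge\varphi)\vee(z_1\neq z_2\wedge\neg\varphi)$. Here I normalize so that the metric is $[0,1]$-valued and there are two points at distance $1$; if necessary, replace $d$ by a suitable definable $[0,1]$-valued function of $z_1,z_2$ taking values $0$ and $1$. Since $\mu$ is a measure in $x$ only and $\psi$ is affine in $\varphi$ with coefficients depending only on the parameters, we get
\[
F^\psi_\mu(y,z_1,z_2)=(1-d(z_1,z_2))F^\varphi_\mu(y)+d(z_1,z_2)\big(1-F^\varphi_\mu(y)\big).
\]
The same identity holds for $\Av(\xbar)$. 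Hence
\[
D^\psi_{\mu,n}(\xbar;y,z_1,z_2)=\big(1-2d(z_1,z_2)\big)D^\varphi_{\mu,n}(\xbar,y).
\]
So $D^\psi_{\mu,n}$ equals $D^\varphi_{\mu,n}$ when $d(z_1,z_2)=0$ and equals $-D^\varphi_{\mu,n}$ when $d(z_1,z_2)=1$.

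Now let $\bar g\models r^{(n)}_\mu$ (over a small set) and let $h$ be any element of $\Cc$ in the $y$-sort. The function $t\mapsto D^\varphi_{\mu,n}(\bar g(t),h(t))$ is a random variable. Let $A$ be the event where it is negative, or more precisely its representative in the auxiliary sort, approximated if needed. Using the fact that models of $T^R$ are closed under gluing along events, pick $k_1,k_2$ with $k_1=k_2$ off $A$ and $d(k_1,k_2)=1$ on $A$; for example, glue constants $\tilde a,\tilde b$ with $d(a,b)=1$. Then
\[
\Eb[D^\psi_{\mu,n}(\bar g;h,k_1,k_2)]=\Eb[|D^\varphi_{\mu,n}(\bar g,h)|].
\]
The only care needed is that this pointwise computation is legitimate for elements of the monster $\Cc$ rather than of some $M^\Omega$. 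I would handle this by working in a randomization model $N^\Omega\preceq\Cc$ containing $\bar g,h$ (any small set lies in one, by the structure theory of \cite{BYRV}), approximating by finite-image functions and using uniform continuity of the connectives.

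Consequently
\[
\big(\tsup{y}\,\Eb[|D^\varphi_{\mu,n}|]\big)^{r^{(n)}_\mu}\le\big(\tsup{y,z_1,z_2}\,|\Eb[D^\psi_{\mu,n}]|\big)^{r^{(n)}_\mu}.
\]
The right-hand side tends to $0$ by Proposition \ref{prop:ABC}$(b)$ applied to $\psi$, since $r_\mu$ is generically stable over $M^\Omega$. Proposition \ref{prop:ABC}$(a)$ then gives that $\mu$ is \fim\ over $M$.

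The main obstacle I expect is the gluing step: justifying that the sign event of the random variable is available and that the glued parameters exist in $\Cc$ realizing exactly this expectation. The cleanest route is probably to use approximation to within $\e$ rather than exact equality, which suffices for the limit.
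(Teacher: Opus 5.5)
Your reduction is the paper's. The paper also codes the sign of $D^\varphi_{\mu,n}$ into extra parameter variables: it uses a non-constant formula $\chi(z)$ with $\chi(c_0)=0$ and $\chi(c_1)=1$ for $c_0,c_1\in M^z$, where you use $d(z_1,z_2)$. It then glues the parameter along the sign set and compares the criteria of Proposition \ref{prop:ABC}$(a)$ and $(b)$. Your formula $\psi$ works as well as the paper's $\hat\varphi$, since $D^\psi_{\mu,n}=(1-2d(z_1,z_2))D^\varphi_{\mu,n}$ holds identically.

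The gap is in the step you flagged: carrying out the gluing at an arbitrary $\bar g\models r^{(n)}_\mu$ and $h\in\Cc$. Your justification, that $\bar g,h$ lie in some $N^\Omega\preceq\Cc$, is false in general. For discrete $T$ one has $d^\Omega(f,f')=\mathbb{P}(f\neq f')$. By Borel--Cantelli, every element of $N^\Omega$ is then represented by a countable-image function, so $\theta\mapsto\Eb[\theta(f)]$ is a purely atomic measure on $S_x(T)$ for every $f\in N^\Omega$. But $\Eb[\theta(g)]=\mu(\theta)$ for $g\models r_\mu$. So as soon as $\mu$ restricted to parameter-free formulas is non-atomic, no realization of $r_\mu$ lies in any model of the form $N^\Omega$. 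An example is the coin-flipping measure in the stable theory of infinitely many independent unary predicates, which is \fim. Your other suggestion, gluing directly in $\Cc$ along an auxiliary-sort event, would need fullness properties of arbitrary models of $T^R$ that you have not established.

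The fix, which is what the paper does, is to use elementarity instead of trying to place $\bar g,h$ in a nice model. The inequality you need is $\Eb[|D^\varphi_{\mu,n}(\xbar,y)|]\le\sup_{z_1z_2}|\Eb[D^\psi_{\mu,n}(\xbar;y,z_1,z_2)]|$ for all $\xbar,y$. This is a closed condition on $\cL^R$-definable predicates with parameters from $M^c$.
\begin{itemize}
\item Since $M^\Omega\preceq\Cc$, it suffices to check the condition in $M^\Omega$.
\item By density of $M^\Omega_0$ and uniform continuity, it then suffices to check it at finite-image $\bar a,b$.
\item At such $\bar a,b$ the sign set is a genuine measurable set, because $D^\varphi_{\mu,n}$ is a definable predicate over $M$.
\item The glued parameters are then finite-image functions with values in $M$. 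So you should choose your two points at distance $1$ inside $M$, just as the paper chooses $c_0,c_1\in M^z$.
\end{itemize}
The paper phrases this as the pointwise inequality $G_n\le H_n$ on $S_{\xbar}(M^\Omega)$, verified on $M^\Omega_0$. With this replacement your argument goes through.
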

\begin{proof}
To ease notation, let $r_n=r^{(n)}_\mu|_{M^\Omega}$. 
By Proposition \ref{prop:ABC}$(a)$, it suffices to fix an $\cL$-formula $\varphi(x,y)$ and prove
\begin{equation}\label{eq:goal}
\lim_{n\to\infty}\left(\sup_y\Eb[|D^\varphi_{\mu,n}(\xbar,y)|]\right)^{r_n}=0.
\end{equation}
 Without loss of generality, we may assume $\varphi(x,y)$ is $[0,1]$-valued. Fix some non-constant $\cL$-formula $\chi(z)$ (in some tuple $z$ of variables). After shifting and scaling, we may assume there are $c_0,c_1\in M^z$ such that $\chi(c_0)=0$ and $\chi(c_1)=1$. Define the formula 
\[
\hat{\varphi}(x;y,z)={\min}\big({\max}(\chi(z),\varphi(x,y)),\max(1-\chi(z),1-\varphi(x,y))\big).
\]
In particular,
\[
\hat{\varphi}(x;y,c_0)=\varphi(x,y)\quad\text{and}\quad\hat{\varphi}(x;y,c_1)=1-\varphi(x,y),
\]
which implies
\begin{equation*}
D^{\hat{\varphi}}_{\mu,n}(\xbar;y,c_0)=D^\varphi_{\mu,n}(\xbar,y)\quad\text{ and }\quad D^{\hat{\varphi}}_{\mu,n}(\xbar;y,c_1)=-D^\varphi_{\mu,n}(\xbar,y).\tag{$\ast$}
\end{equation*}
Given $n\geq 1$,  define the following continuous functions on $S_{x_1,\ldots,x_n}(M^\Omega)$:
\[
G_n(\xbar)={\textstyle\sup_y}\Eb[|D^\varphi_{\mu,n}(\xbar,y)|]\quad\text{and}\quad H_n(\xbar)={\textstyle\sup_{yz}}\left|\Eb[D^{\hat{\varphi}}_{\mu,n}(\xbar;y,z)]\right|.
\]
We emphasize that $G_{n}(\bar{x})$ is constructed using the formula $\varphi$, while $H_{n}(\bar{x})$ uses $\hat{\varphi}$. Note that our goal (\ref{eq:goal}) is $\lim_{n\to\infty} G_n(r_n)=0$. On the other hand, since $r_\mu$ is generically stable over $M^\Omega$, it follows from Proposition \ref{prop:ABC}$(b)$ that 
\[
\lim_{n\to\infty}H_n(r_n)=0.
\]
 Therefore, to establish (\ref{eq:goal}), it suffices to fix $n\geq 1$ and prove the pointwise inequality 
 \begin{equation}\label{eq:ineq1}
 G_n(\xbar)\leq H_n(\xbar).
 \end{equation}
 By  density of $M_0^\Omega$ in $M^\Omega$, it suffices to verify (\ref{eq:ineq1}) on $M_0^\Omega$. So fix $\abar\in (M_0^\Omega)^{\xbar}$ and $b\in (M_0^\Omega)^y$. We will construct $c\in (M_0^\Omega)^z$ such that 
 \begin{equation}\label{eq:ineq2}
 \Eb[|D^\varphi_{\mu,n}(\abar,b)|]=  \left|\Eb[D^{\hat{\varphi}}_{\mu,n}(\abar;b,c)]\right|.
 \end{equation}
 Consider the set
 \[
 A=\{t\in\Omega:D^\varphi_{\mu,n}(\abar(t),b(t))\geq 0\}.
 \]
Since $D^\varphi_{\mu,n}(\xbar,y)$ is a definable predicate in $T$, we have $A\in\cB$. Thus we may define $c\in (M_0^\Omega)^z$ so that
 \[
 c(t)=\begin{cases}
 c_0 & \text{if $t\in A$,}\\
 c_1 & \text{if $t\not\in A$.}
 \end{cases}
 \]
 Then
\begin{align*}
  \Eb[|D^\varphi_{\mu,n}(\abar,b)|] &= \int_{\Omega} |D^{\varphi}_{\mu,n}(\abar(t),b(t))|\, d\mathbb{P} \\  
  &= \int_{A}  D^{\varphi}_{\mu,n}(\abar(t),b(t))\, d\mathbb{P} - \int_{\Omega \backslash A} D^{\varphi}_{\mu,n}(\abar(t),b(t))\, d\mathbb{P} \\ 
  &= \int_{A}  D^{\hat{\varphi}}_{\mu,n}(\abar(t),b(t),c(t))\, d\mathbb{P} + \int_{\Omega \backslash A} D^{\hat{\varphi}}_{\mu,n}(\abar(t),b(t),c(t))\, d\mathbb{P} \\ 
  &= \int_{\Omega} D^{\hat{\varphi}}_{\mu,n}(\abar(t),b(t),c(t))\, d\mathbb{P} \\
  &= \Eb[D^{\hat{\varphi}}_{\mu,n}(\abar;b,c)]\\
  &= \left|\Eb[D^{\hat{\varphi}}_{\mu,n}(\abar;b,c)]\right|, 
\end{align*}
where the third equality uses $(\ast)$. This establishes (\ref{eq:ineq2}). 
\end{proof}

\section{From self-averaging for $\mu$ to generic stability of $r_\mu$}\label{sec:sargs}

 Given a variable $x$, we will use $\bfx$ for an infinite sequence $(x_i)_{i<\omega}$ of variables, each of the same sort as $x$.  The following notion of self-averaging for measures was introduced in \cite{CGH2}.

\begin{definition}\label{def:SA}
Suppose $\mu\in\kM_x(\cU)$ is Borel-definable over $M\prec\cU$.
\begin{enumerate}[$(1)$]
\item Define $\KM(\mu/M)=\{\lambda\in\kM_{\bfx}(\cU):\lambda|_M=\mu^{(\omega)}|_M\}$. Measures in $\KM(\mu/M)$ are called \textbf{Keisler-Morley measures in $\mu$ over $M$}. 
\item $\mu$ is \textbf{self-averaging over $M$} if for any $\lambda\in\KM(\mu/M)$,
\[
\lim_{i\to\infty}\lambda|_{x_i}=\mu.
\]
\end{enumerate}
\end{definition}

If one views Keisler-Morley measures as analogues of Morley sequences for types, then self-averaging is analogous to generic stability as defined in Definition \ref{def:gs}. 

In the case that $T$ is discrete, Remark 3.21 of \cite{CGH2} characterizes self-averaging for $\mu$ as generic stability of $r_\mu$ with respect to formulas over constant parameters in $\cU^c$ (see Subsection \ref{sec:constants} for details). Consequently, generic stability for $r_\mu$ implies self-averaging for $\mu$. In this section, we will recover this result for continuous $T$ and prove the converse. The proof will consist of two main steps:

\begin{enumerate}[$(1)$]
\item In Proposition \ref{prop:SSA}, we characterize generic stability of $r_\mu$ by a  strengthening of the definition of self-averaging designed to address the discrepancy in the parameters mentioned above. For discrete $T$, this  characterization is largely evident from work of Chernikov, Gannon, and Krupi\'{n}ski  \cite{CGK} (see Remark \ref{rem:SSA}). 
\item We  prove that self-averaging implies the stronger form captured in the previous step. This is the portion of the proof requiring substantial new techniques. 
\end{enumerate}

In order to state Proposition \ref{prop:SSA}, we first modify the notion of a Keisler-Morley measure. In particular, rather than considering global measures in $\bfx$, we will focus on measures over $M$ in $\bfx$ together with additional variables $y$, which in some sense will function as ``random parameters" in the arguments below.

 \begin{definition}
  Suppose $\mu\in\kM_x(\cU)$ is Borel-definable over $M$. Given a tuple $y$ of variables, define
\[
\KM_y(\mu/M)=\{\sigma\in\kM_{\bfx y}(M):\sigma|_{\bfx}=\mu^{(\omega)}|_M\}. 
\]
  \end{definition}

In the context of the previous definition, measures in $\KM(\mu/M)$ can be associated to measures in $\KM_y(\mu/M)$ satisfying the additional property that the restriction to $y$ is a type (see Lemma \ref{lem:weird-extension} below for a more precise statement).

We will also use the following lemma.

\begin{lemma}\label{lem:measure-extension}
Fix $M\models T$, $p\in S^R_{\bfx}(M^\Omega)$, and $\sigma\in\kM_{\bfx y}(M)$ with $p$ extending $\sigma|_{\bfx}$. Then there is a type $q\in S^R_{\bfx y}(M^\Omega)$ such that $q|_{\bfx}=p$ and $q$ extends $\sigma$. 
\end{lemma}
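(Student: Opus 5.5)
Here $p$ is a type over $M^\Omega$ in variables $\bfx$, and $\sigma$ is a measure over $M$ in $\bfx y$ whose $\bfx$-marginal $\sigma|_{\bfx}$ equals $\nu_{p|_{M^c}}$. We need a type $q$ over $M^\Omega$ that restricts to $p$ and whose associated measure on $M^c$ is $\sigma$. The plan is a compactness argument reducing to finitely many conditions, each of which is realized concretely in $M_0^\Omega$ (or a saturated extension) by choosing a random $y$-coordinate via a disintegration.

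\emph{Step 1 (reduction).} By compactness in $T^R$, it suffices to show that the following set of conditions in $\bfx y$ is finitely satisfiable in some elementary extension $\mathcal{N}\succeq M^\Omega$ (e.g.\ in $\Cc$):
\begin{itemize}
\item[] the conditions of $p(\bfx)$;
\item[] the conditions $\Eb[\theta(\bfx,y)]=\sigma(\theta)$ for $\cL_M$-formulas $\theta$.
\end{itemize}
Since $\bfx$ is infinite but each condition involves only finitely many variables, we may fix finitely many $\cL_M$-formulas $\theta_1,\ldots,\theta_k$ in $\bar x=(x_0,\ldots,x_{m-1})$ and $y$, together with a finite piece of $p$, and an $\epsilon>0$.

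\emph{Step 2 (realize $p$).} Take $\abar\models p$ in $\Cc$. By saturation we may work inside a model of the form $N^{\Omega'}$ with $N\succeq M$ sufficiently saturated; alternatively, by density we approximate $\abar$ by finite-image functions. In either case, $\abar$ is a measurable map $t\mapsto \abar(t)\in N^{\bar x}$, and its pushforward distribution on $S_{\bar x}(M)$ is $\nu=\sigma|_{\bar x}$ (this uses Fact \ref{fact:nup}, since $p$ extends $\sigma|_{\bfx}$).

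\emph{Step 3 (construct $y$).} Disintegrate $\sigma$ over its $\bar x$-marginal. Because we only need the finitely many values $\sigma(\theta_j)$ up to $\epsilon$, we partition $S_{\bar x}(M)$ into finitely many Borel pieces $P_1,\ldots,P_l$ on which the relevant data are nearly constant (compare the approximation-by-finite-partition arguments in Theorem \ref{thm:rgstofim}). For each piece $P_i$, the conditional measure of $\sigma$ on $P_i$ is approximated by a finite convex combination of types $\tp(\bar e,b)/M$. On the event $\{t:\tp(\abar(t)/M)\in P_i\}$, which is measurable and has probability $\nu(P_i)$, we split $\Omega$ further (using atomlessness) into subsets with the prescribed weights. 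On each subset we set $y(t)=b$, where $b\in N$ is chosen by saturation of $N$ so that $\tp(\abar(t),b/M)$ is close to the target type for the relevant $\theta_j$. This last choice needs $b$ to depend measurably on $t$ with finite image. We achieve this by refining so that $\abar$ has finite image on these events, or we use definable-selection and approximation.

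\emph{Step 4 (conclusion).} We then verify that $\Eb[\theta_j(\abar,y)]$ is within $\epsilon$ of $\sigma(\theta_j)$, and conclude by compactness.

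The main obstacle is Step 3: making the fiberwise choice of $b$ compatible with the actual values $\abar(t)$ rather than merely their types over $M$. When $\abar$ has finite image, each value $\abar(t)$ realizes a single type over $M$, but the disintegration requires splitting a single value's fiber among several $b$'s. This is handled by atomlessness: we refine the event where $\abar$ takes that value. Since $\abar\in M^\Omega$ may have only been approximated by finite-image functions, with parameters in $M$ rather than in $N$, we must check that this approximation preserves the marginal distribution up to $\epsilon$, using uniform continuity of the $\theta_j$.
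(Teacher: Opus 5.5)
Your proposal has a genuine gap in Steps 2--3, exactly at the point you flag as the main obstacle, and the remedy you sketch does not close it.

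Step 3 needs two things at once. First, the law of $t\mapsto\tp(\bar{a}(t)/M)$ must be exactly $\nu=\sigma|_{\bar{x}}$, so that $\{t:\tp(\bar{a}(t)/M)\in P_i\}$ has probability $\nu(P_i)$ for Borel $P_i$. Second, $\bar{a}$ must have finite image, so that $b$ can be chosen value by value. In general these cannot both hold.

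\begin{itemize}
\item Your Step 2 claim, that by saturation you can realize $p$ inside some $N^{\Omega'}$, is false. For discrete $T$ the metric on $N^{\Omega'}$ is $\mathbb{P}(f\neq g)$, so every element is a countable-image map and its law on $S_{\bar{x}}(M)$ is purely atomic. But $\sigma|_{\bar{x}}$ may be non-atomic, e.g.\ in DLO the Keisler measure on $S_x(\mathbb{Q})$ induced by Lebesgue measure on $[0,1]$.
\item A realization of $p$ generally lies outside $M^\Omega$, and $\Cc$ is not a space of functions. So ``approximating $\bar{a}$ by finite-image functions'' can only mean choosing some $\bar{a}'\in(M^\Omega_0)^{\bar{x}}$ that approximately satisfies finitely many conditions of $p$.
\item The law of such an $\bar{a}'$ agrees with $\nu$ only on finitely many continuous test functions. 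That controls neither the masses of Borel pieces $P_i$ nor whether a value $\bar{a}'(t)$ can be paired with some $b$ so that $\tp(\bar{a}'(t),b/M)$ is close to a target type chosen \emph{after} the partition. Uniform continuity of the $\theta_j$ does not supply this.
\end{itemize}

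What your route actually requires is an open-mapping theorem: the marginal map $\kM_{\bar{x}y}(M)\to\kM_{\bar{x}}(M)$ is open. This follows from Ditor--Eifler \cite{DitEif}, since the restriction $S_{\bar{x}y}(M)\to S_{\bar{x}}(M)$ is open. With it, any finitely supported law weakly close enough to $\nu$ is the marginal of a measure weakly close to $\sigma$ on $\theta_1,\ldots,\theta_k$. That is a substantive theorem, not a routine check.

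None of this construction is needed, and the paper's proof avoids it entirely. Whether $q$ extends $\sigma$ depends only on $q|_{M^c}$, and Fact \ref{fact:nup} already supplies $r\in S^R_{\bfx y}(M^c)$ with $\nu_r=\sigma$; the coupling work is hidden inside Ben Yaacov's theorem.

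\begin{itemize}
\item Take $\boldsymbol{a}\models p$ and $\boldsymbol{a}'b'\models r$ in $\Cc$.
\item For every $\cL_M$-formula $\theta(\bfx)$ we have $\Eb[\theta(\boldsymbol{a})]=\sigma(\theta)=\Eb[\theta(\boldsymbol{a}')]$, so $\boldsymbol{a}\equiv_{M^c}\boldsymbol{a}'$ by quantifier elimination (Fact \ref{fact:randomizationsQE}).
\item By homogeneity of $\Cc$, choose $b$ with $\boldsymbol{a}b\equiv_{M^c}\boldsymbol{a}'b'$.
\item Set $q=\tp(\boldsymbol{a}b/M^\Omega)$. Then $q|_{\bfx}=p$ and $q|_{M^c}=r$, so $q$ extends $\sigma$.
\end{itemize}

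The point your approach misses is that you never need to control how $b$ interacts with parameters in $M^\Omega\setminus M^c$. So there is no reason to build $b$ by hand as a random variable.
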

\begin{proof}
By Fact \ref{fact:nup}, there is a type $r\in S^R_{\bfx y}(M^c)$ extending $\sigma$. Choose $\boldsymbol{a}\in \mathcal{C}^{\bfx}$ and $\boldsymbol{a}'b'\in \mathcal{C}^{\bfx y}$ such that $\boldsymbol{a}\models p$ and $\boldsymbol{a}'b'\models r$. Note that for any $\cL_M$-formula $\theta(\bfx)$, we have 
\[
\Eb[\theta(\boldsymbol{a})]=(\Eb[\theta(\bfx)])^p=\sigma(\theta(\bfx))=(\Eb[\theta(\bfx)])^r=\Eb[\theta(\boldsymbol{a}')].
\]
Therefore, by quantifier elimination, we have $\boldsymbol{a}\equiv_{M^c} \boldsymbol{a}'$. Choose some $b\in\mathcal{C}^y$ such that $\boldsymbol{a}b\equiv_{M^c}\boldsymbol{a}'b'$, and let $q=\tp(\boldsymbol{a}b/M^\Omega)$. Then $q|_{\bfx}=p$ and hence  $q$ extends $\sigma$ since $q|_{M^c}=r$. 
\end{proof}

We can now state and prove Proposition \ref{prop:SSA}.
Given a global measure $\mu\in\kM_x(\cU)$, which is Borel-definable over $M$, and some measure $\nu\in\kM_y(M)$, we will abuse notation and write $\mu\otimes \nu$ for the resulting restricted Morley product in $\kM_{xy}(M)$.

\begin{proposition}\label{prop:SSA}
Suppose $\mu\in\kM_x(\cU)$ is definable over $M\prec\cU$. Then the following are equivalent. 
\begin{enumerate}[$(i)$]
\item $r_\mu$ is generically stable over $M^{\Omega}$.
\item For any variable tuple $y$, if $\sigma\in \KM_y(\mu/M)$ then $\lim_{i\to\infty}\sigma|_{x_i y}=\mu\otimes\sigma|_y$.
\end{enumerate}
\end{proposition}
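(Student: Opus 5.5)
The plan is to translate both conditions into statements about Morley sequences in $r_\mu$ over $M^\Omega$, and then compare them using Fact \ref{fact:nup}, quantifier elimination (Fact \ref{fact:randomizationsQE}) and Lemma \ref{lem:measure-extension}. The basic dictionary is this. Let $\boldsymbol{a}=(a_i)_{i<\omega}$ realize $r_\mu^{(\omega)}|_{M^\Omega}$. By Fact \ref{fact:rtrans}, $r^{(n)}_\mu=r_{\mu^{(n)}}$ extends $\mu^{(n)}$, so $\tp(\boldsymbol{a}/M^\Omega)$ extends $\mu^{(\omega)}|_M$. Conversely, by quantifier elimination, any $\boldsymbol{a}'$ whose type over $M^c$ extends $\mu^{(\omega)}|_M$ has the same type over $M^c$ as such an $\boldsymbol{a}$. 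Moreover, for a parameter $b\in\Cc^y$ and an $\cL_M$-formula $\varphi(x,y)$, we have
\[
\Eb[\varphi(a_i,b)]\to (\Eb[\varphi(x,b)])^{r_\mu}=\Eb[F^\varphi_\mu(b)]
\]
if and only if $\tp(a_i/\cU^\Omega b)$ converges on the formulas $\Eb[\varphi(x,y)]$. Here the equality is $(\ast)$ of Fact \ref{fact:rtrans}. The right-hand side is exactly $(\mu\otimes\tp(b)|_{M^c})(\varphi(x,y))$, read via Fact \ref{fact:nup}. Thus ``$\sigma|_{x_iy}\to\mu\otimes\sigma|_y$'' is precisely the statement that $\Eb[\varphi(a_i,b)]\to \Eb[F^\varphi_\mu(b)]$ for a tuple $\boldsymbol{a}b$ whose type over $M^c$ extends $\sigma$.

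For $(i)\Rightarrow(ii)$, fix $\sigma\in\KM_y(\mu/M)$. Apply Lemma \ref{lem:measure-extension} with $p=r^{(\omega)}_\mu|_{M^\Omega}$, which extends $\sigma|_{\bfx}=\mu^{(\omega)}|_M$, to get $\boldsymbol{a}b$ with $\boldsymbol{a}\models p$ and $\tp(\boldsymbol{a}b/M^c)$ extending $\sigma$. Since $r_\mu$ is generically stable, the standard equivalents (\cite[Fact 3.4]{CGH2}) give $\lim_i\tp(a_i/\Cc)=r_\mu$, with no restriction on the parameters. In particular, for each $\varphi$,
\[
\sigma(\varphi(x_i,y))=\Eb[\varphi(a_i,b)]\to(\Eb[\varphi(x,b)])^{r_\mu}=\Eb[F^\varphi_\mu(b)]=(\mu\otimes\sigma|_y)(\varphi).
\]
The last equality holds because $F^\varphi_\mu$ is an $M$-definable predicate. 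Integrating it against $\nu_{\tp(b/M^c)}=\sigma|_y$ gives the Morley product, using the $\Eb[F]$ extension discussed after Fact \ref{fact:nup}.

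For $(ii)\Rightarrow(i)$, I would use the criterion in Proposition \ref{prop:ABC}$(b)$, or more directly the characterization of generic stability of $r_\mu$ through convergence of $\tp(a_i/\Cc)$ along a Morley sequence over $M^\Omega$. I would argue by contradiction. Suppose there are a Morley sequence $\boldsymbol{a}$ in $r_\mu$ over $M^\Omega$, a parameter $b\in\Cc^y$ and an $\cL_M$-formula $\varphi$ such that $\Eb[\varphi(a_i,b)]$ does not converge to $\Eb[F^\varphi_\mu(b)]$. Since $T^R$ types are determined by the $\Eb$-values (QE), failure of convergence of $\tp(a_i/\Cc)$ to $r_\mu$ must be witnessed by such a $\varphi$ and $b$. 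The parameters $b$ are arbitrary here: the $\cU^\Omega$-parameters are not lost, because they are absorbed into $y$, and the random parameters are handled by $b$ itself. Then I set $\sigma=\nu_{\tp(\boldsymbol{a}b/M^c)}\in\kM_{\bfx y}(M)$. Its $\bfx$-marginal is $\mu^{(\omega)}|_M$, so $\sigma\in\KM_y(\mu/M)$, and the dictionary above shows that $(ii)$ fails for $\sigma$.

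The main obstacle is the reduction in $(ii)\Rightarrow(i)$: making sure that generic stability of $r_\mu$ over $M^\Omega$ really only needs to be tested on formulas of the form $\Eb[\varphi(x,b)]$ with $\varphi$ over $M$ and $b$ arbitrary. One issue is that $\tp(a_i/\Cc)$ involves formulas $\Eb[\varphi(x,b)]$ with a possibly infinite tuple $b$ from $\Cc$, and also auxiliary-sort parameters. The first point is fine, since any $\cL^R$-formula is a uniform limit of connectives applied to such $\Eb$-atoms, and limits of types are computed atom by atom. I would handle the auxiliary sort either by noting that it is coded by events, hence by function-sort parameters through a trick like the one in Theorem \ref{thm:rgstofim}, or by citing the QE as stated in Fact \ref{fact:randomizationsQE}. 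I would also verify, from $M^\Omega$-definability, that the limit value of $\Eb[\varphi(x,b)]$ under $r_\mu$ is indeed $\Eb[F^\varphi_\mu(b)]$ for every $b\in\Cc$, not only for $b$ in $\cU^\Omega$. This follows by applying $(\ast)$ in Fact \ref{fact:rtrans}, since both sides are $M^\Omega$-definable predicates agreeing on $\Cc$.
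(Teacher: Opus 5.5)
Your proposal is correct and follows the paper's proof essentially step for step. For $(i)\Rightarrow(ii)$ you realize the type given by Lemma \ref{lem:measure-extension} over $r_\mu^{(\omega)}|_{M^\Omega}$ and use $(\ast)$ of Fact \ref{fact:rtrans} to identify the limit with $\int F^\varphi_\mu\,d\sigma|_y$; for $(ii)\Rightarrow(i)$ you reduce via Fact \ref{fact:randomizationsQE} to a single $\Eb[\varphi(x,b)]$ and apply $(ii)$ to $\sigma=\nu_{\tp(\boldsymbol{a}b/M^c)}$, exactly as the paper does (the contrapositive framing makes no difference, and the auxiliary-sort subtlety you flag is also passed over in the paper by simply invoking Fact \ref{fact:randomizationsQE}).
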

\begin{proof}
$(i)\Rightarrow(ii)$.
Assume $r_\mu$ is generically stable over $M^\Omega$ and fix $\sigma\in\KM_y(\mu/M)$. Let $p=r_\mu^{(\omega)}|_{M^\Omega}$. Then for any $n>0$, we have 
\[
p|_{x<n}=r^{(n)}_\mu|_{M^\Omega}=r_{\mu^{(n)}}|_{M^\Omega},
\]
where the final equality is by Fact \ref{fact:rtrans}. Since $r_{\mu^{(n)}}$ extends $\mu^{(n)}$ (by Fact \ref{fact:rtrans}) and $\sigma|_{\bfx}=\mu^{(\omega)}|_M$, it follows that $p$ extends $\sigma|_{\bfx}$. By Lemma \ref{lem:measure-extension}, there is $q\in S^R_{\bfx y}(M^\Omega)$ such that $q|_{\bfx}=p$ and $q$ extends $\sigma$. Let $(a_i)_{i<\omega}b\models q$. Then  $(a_i)_{i<\omega}$ is a Morley sequence in $r_\mu$ over $M^\Omega$. Altogether, given an $\cL_M$-formula $\varphi(x,y)$, we have
 \begin{multline*}
 \lim_{i\to\infty}\sigma(\varphi(x_i,y))\overset{(a)}{=}\lim_{i\to\infty}\Eb[\varphi(a_i,b)]\overset{(b)}{=}(\Eb[\varphi(x,b)])^{r_\mu}\\
 \overset{(c)}{=}\Eb[F^{\varphi(x,y)}_\mu](\tp(b/M^\Omega))
 \overset{(d)}{=}\int_{S_y(M)} F^{\varphi(x,y)}_\mu\,d\sigma|_y=(\mu\otimes\sigma|_y)(\varphi(x,y)),
 \end{multline*}
 where the labeled equations have the following justifications:
 \begin{enumerate}[$(a)$]
 \item for each $i<\omega$, $\sigma(\varphi(x_i,y))=(\Eb[\varphi(x_i,y)])^q=\Eb[\varphi(a_i,b)]$;
 \item by $(i)$ and since $(a_i)_{i<\omega}$ is a Morley sequence in $r_\mu$ over $M^\Omega$; 
 \item by Fact \ref{fact:rtrans};
 \item since $\tp(b/M^\Omega)$ extends $\sigma|_y$.
 \end{enumerate}
 Since $\varphi(x,y)$ was arbitrary, this establishes $\lim_{i\to\infty}\sigma|_{x_i y}=\mu\otimes\sigma|_y$.\medskip
 
$(ii)\Rightarrow (i)$.  Assume $(ii)$. To show $r_\mu$ is generically stable over $M^\Omega$, we fix a Morley sequence $(a_i)_{i<\omega}$ in $r_\mu$ over $M^\Omega$, and show that $\lim_{i\to\infty}\tp(a_i/\mathcal{C})=r_\mu$.
By  Fact \ref{fact:randomizationsQE}, it suffices to fix an $\cL$-formula $\varphi(x,y)$ and some $b\in\mathcal{C}^y$, and prove
\[
\lim_{i\to\infty}\Eb[\varphi(a_i,b)]=(\Eb[\varphi(x,b)])^{r_\mu}.
\]

 Let $p=\tp((a_i)_{i<\omega}b/M^\Omega)\in S^R_{\bfx y}(M^\Omega)$. Let $\sigma=\nu_{p|_{M^c}}\in\kM_{\bfx y}(M)$ (recall Fact \ref{fact:nup}). So $p$ extends $\sigma$. We claim that $\sigma|_{\bfx}=\mu^{(\omega)}|_M$, and hence $\sigma\in\KM_y(\mu/M)$. Indeed, as before, Fact \ref{fact:rtrans} implies that for any $n>0$, $p|_{x_{<n}}=r_{\mu^{(n)}}|_{M^\Omega}$, and hence $p|_{x_{<n}}$ extends both $\sigma|_{x_{<n}}$ and $\mu^{(n)}|_M$. Thus  these measures are the same.

Now we have
 \begin{multline*}
 \lim_{i\to\infty}\Eb[\varphi(a_i,b)] \overset{(a)}{=} \lim_{i\to\infty}\sigma(\varphi(x_i,y)) \overset{(b)}{=} (\mu\otimes\sigma|_y)(\varphi(x,y))\\
 =\int_{S_y(M)}F^{\varphi(x,y)}_\mu\,d\sigma|_y \overset{(c)}{=} \Eb[F^{\varphi(x,y)}_\mu](\tp(b/M^\Omega)) \overset{(d)}{=} (\Eb[\varphi(x,b)])^{r_\mu},
 \end{multline*}
  where the labeled equations have the following justifications:
 \begin{enumerate}[$(a)$]
 \item for each $i<\omega$, $\Eb[\varphi(a_i,b)]=(\Eb[\varphi(x_i,y)])^p=\sigma(\varphi(x_i,y))$ ;
 \item by $(ii)$ and since $\sigma\in\KM_y(\mu/M)$;
  \item since $\tp(b/M^\Omega)$ extends $\sigma|_y$;
 \item by Fact \ref{fact:rtrans}.\qedhere
 \end{enumerate}
\end{proof}

\begin{corollary}\label{cor:rgstoSA}
Suppose $\mu\in\kM_x(\cU)$ is definable over $M\prec\cU$ and $r_\mu$ is generically stable over $M^\Omega$. Then $\mu$ is self-averaging over $M$. 
\end{corollary}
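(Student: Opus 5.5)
The plan is to derive the corollary from Proposition \ref{prop:SSA}$(i)\Rightarrow(ii)$, using the empty tuple $y$ (or any tuple) to pass from global Keisler-Morley measures to restricted ones. Fix $\lambda\in\KM(\mu/M)$. Since $r_\mu$ is generically stable over $M^\Omega$, Proposition \ref{prop:SSA}$(ii)$ holds for every tuple $y$. We must show $\lim_{i\to\infty}\lambda|_{x_i}=\mu$ as global measures. By the definition of the weak$^*$ topology, it suffices to fix an $\cL$-formula $\varphi(x,y)$ and a parameter $b\in\cU^y$, and show $\lambda(\varphi(x_i,b))\to\mu(\varphi(x,b))$.

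The key step is to encode the parameter $b$ as a type over $M$ in an extra variable. Let $\sigma\in\kM_{\bfx y}(M)$ be defined by
\[
\sigma(\theta(\bfx,y))=\lambda(\theta(\bfx,b))
\]
for $\cL_M$-formulas $\theta$. Equivalently, $\sigma$ is the restriction to $M$ of the global product $\lambda\otimes\delta_{b}$, where the Dirac measure sits in the variable $y$. Since $y$ is realized by $b$, this product is just $\lambda$ with $y$ replaced by $b$, so no definability issues arise. This is presumably the content of Lemma \ref{lem:weird-extension}.

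We then check the relevant properties of $\sigma$:
\begin{enumerate}[$(1)$]
\item $\sigma$ is a Keisler measure.
\item $\sigma|_{\bfx}=\lambda|_M=\mu^{(\omega)}|_M$, so $\sigma\in\KM_y(\mu/M)$.
\item $\sigma|_y$ is the Dirac measure at $\tp(b/M)$.
\end{enumerate}

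Applying Proposition \ref{prop:SSA}$(ii)$ to $\sigma$ and the $\cL_M$-formula $\varphi(x,y)$ gives
\[
\lim_i\lambda(\varphi(x_i,b))=\lim_i\sigma(\varphi(x_i,y))=(\mu\otimes\sigma|_y)(\varphi(x,y))=\int_{S_y(M)}F^{\varphi}_\mu\,d\delta_{\tp(b/M)}.
\]
The integral on the right equals $F^\varphi_\mu(\tp(b/M))=\mu(\varphi(x,b))$, using that $\mu$ is $M$-invariant (it is definable over $M$).

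The only delicate point is the bookkeeping in the continuous setting. For continuous formulas one takes $\lambda(\theta(\bfx,b))$ to mean $\int\theta(\bfx,b)\,d\lambda$, and the same computation goes through. One also needs that convergence on formulas over $\cU$ with parameters $b$ is exactly convergence in $\kM_x(\cU)$. Both points are routine, so I expect no real obstacle here: the corollary is essentially immediate once parameters are treated as realized variables.
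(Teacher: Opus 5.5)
Your proposal is correct and is essentially the paper's proof: define $\sigma\in\kM_{\bfx y}(M)$ by $\sigma(\theta(\bfx,y))=\lambda(\theta(\bfx,b))$, note that $\sigma\in\KM_y(\mu/M)$ with $\sigma|_y$ concentrated on $\tp(b/M)$, and apply condition $(ii)$ of Proposition \ref{prop:SSA} to get $\lim_i\lambda(\varphi(x_i,b))=(\mu\otimes\sigma|_y)(\varphi(x,y))=\mu(\varphi(x,b))$. One small misattribution: this construction is elementary and is not the content of Lemma \ref{lem:weird-extension}, which goes the other way (from such a $\sigma$ with $\sigma|_y$ a type back to some $\lambda\in\KM(\mu/M)$).
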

\begin{proof}
More specifically, we observe that if $\mu\in\kM_x(\cU)$ is Borel-definable over $M\prec\cU$ and satisfies condition $(ii)$ of Proposition \ref{prop:SSA}, then $\mu$ is self-averaging over $M$. Indeed, assuming condition $(ii)$, fix some $\lambda\in \KM(\mu/M)$. Fix an $\cL_{\cU}$-formula $\varphi(x,b)$ with $b\in\cU^y$. Define $\sigma\in\kM_{\bfx y}(M)$ so that $\sigma(\theta(\bfx,y))=\lambda(\theta(\bfx,b))$ for any $\cL_M$-formula $\theta(\bfx,y)$. Then $\sigma\in\KM_y(\mu/M)$ and thus, by condition $(ii)$, we have
\[
 \lim_{i\to\infty}\lambda(\varphi(x_i,b))=\lim_{i\to\infty}\sigma(\varphi(x_i,y))=(\mu\otimes\sigma|_y)(\varphi(x,y))=\mu(\varphi(x,b)).\qedhere
 \]
\end{proof}

\begin{remark}\label{rem:SSA}
Proposition \ref{prop:SSA} and the proof of Corollary \ref{cor:rgstoSA} bear a strong resemblance to parts of \cite[Section 3]{CGK}. In particular, \cite[Proposition 3.23]{CGK} establishes the $(i)\Rightarrow (ii)$ direction of Proposition \ref{prop:SSA} when $T$ is discrete. Moreover, the proof is nearly the same, except that we replace the use of Ben Yaacov's general ``natural extension" construction (see \cite[Fact 3.17]{CGK}) with Lemma \ref{lem:measure-extension}. We also note that condition $(ii)$ of Proposition \ref{prop:SSA} is formulated in \cite[Proposition 3.23]{CGK} using measures $\sigma\in\kM_{\bfx y}(\cU)$ with $\sigma|_{\bfx,M}=\mu^{(\omega)}|_M$. This can be derived from our formulation by adding more variables for parameters from $\cU$ (similar to the proof of Corollary \ref{cor:rgstoSA}). Finally,  \cite[Theorem 3.13]{CGK} involves a strengthening of condition $(ii)$ with additional uniformity. This is done via a compactness argument based on \cite[Lemma 2.3]{CGH2} (cf. \cite[Proposition 2.5]{CGH2}), which works also in the continuous setting. 
\end{remark}

We now start toward  the converse of Corollary \ref{cor:rgstoSA}.  We will need the following result,  which is proved  in \cite{CGH2} when $T$ is discrete. The adaptation to continuous $T$ requires only minimal adjustments, which we describe in the appendix. 

\begin{lemma}[\cite{CGH2}]\label{lem:SAfam}
Suppose $\mu\in\kM_x(\cU)$ is self-averaging over $M\prec\cU$. Then $\mu$ is finitely approximable over $M$. In particular, $\mu$ is definable over $M$ and $\mu$ is self-commuting (i.e., $\mu_x\otimes\mu_{x'}=\mu_{x'}\otimes\mu_x$).
\end{lemma}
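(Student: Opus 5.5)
The plan is to follow the discrete argument of \cite{CGH2} and check that each step uses only tools available in continuous logic: compactness, weak$^*$-compactness of $\kM_{\bfx}(\cU)$, and Fact~\ref{fact:randomizationsQE}-style type determination. Throughout, $\varphi(x,y)$ is a fixed $[0,1]$-valued $\cL$-formula and $\e>0$. Only finite approximability needs a proof. Fam measures are definable over $M$, and a fam measure commutes with any Borel-definable measure, in particular with itself; both are standard and were adapted to continuous logic in \cite{AndGS}.

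\emph{Step 1: uniformity.} First I will show that there is $N=N(\varphi,\e)$ such that for every $\lambda\in\KM(\mu/M)$ and every $b\in\cU^y$, fewer than $N$ indices $i$ satisfy $|\lambda(\varphi(x_i,b))-\mu(\varphi(x,b))|>\e$. Suppose not. Then for each $N$ there are $\lambda_N$, $b_N$ and bad indices $i_1<\dots<i_N$.
\begin{itemize}
\item Restricting $\lambda_N$ to the subsequence $(x_{i_k})_k$ and reindexing keeps it in $\KM(\mu/M)$. This uses that $\mu^{(\omega)}|_M$ is invariant under passing to increasing subsequences, which follows from the definition of Morley powers of a Borel-definable measure restricted to $M$.
\item So we may assume the bad indices are $1,\dots,N$.
\item Pushing $\lambda_N$ forward along an automorphism fixing $M$, we may further assume that $b_N$ realizes a fixed type over $M$ along a subsequence.
\end{itemize}
Since $\KM(\mu/M)$ is a closed subset of the compact space $\kM_{\bfx}(\cU)$, an ultralimit of these data gives a single $\lambda\in\KM(\mu/M)$ and a single $b$ for which every index is bad. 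This contradicts self-averaging.

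There is one subtlety in Step~1. The quantity $\mu(\varphi(x,b))$ must behave well under the limit. Since $\mu$ is only Borel-definable, I will avoid continuity of $F^\varphi_\mu$ and argue differently. I will realize the limiting parameter as an actual $b$, and use that the condition ``$\lambda(\varphi(x_i,b))>\mu(\varphi(x,b))+\e$'' depends only on $\tp(b/M)$ on the $\mu$-side. Splitting into the two sign cases and passing to a subsequence keeps one of them, so this is manageable.

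\emph{Step 2: from uniformity to sampling.} The aim is to show that for $n\gg N/\e$ the Morley power $\mu^{(n)}|_M$ assigns positive measure to the set of $\bar{x}$ with
\[
\tsup{b}\,|\Av(\bar x)(\varphi(x,b))-\mu(\varphi(x,b))|\leq 3\e .
\]
Step~1 gives control over each single $\lambda$. To get from this to averages of actual types, the idea is to choose $\lambda\in\KM(\mu/M)$ cleverly: a measure under which a random $\omega$-tuple behaves like an i.i.d.\ sample, while global parameters $b$ are allowed to correlate with the sample. Applying Step~1 to $\lambda$ conditioned on finitely many coordinates, together with an averaging/Markov argument, bounds the expected proportion of bad coordinates in a random $n$-tuple uniformly in $b$. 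Once the displayed set has positive $\mu^{(n)}|_M$-measure, I take a positive-measure $M$-definable condition inside it. It is then realized by a tuple from $M$, which is exactly the fam condition over $M$. Doing this inside $M$ requires the supremum over $b$ to define a predicate over $M$, which is circular before definability is known. So I will first extract definability of $\mu$ from Step~1 and only then finish.

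\emph{Main obstacle.} I expect the hard part to be this interleaving: proving definability of $\mu$ directly from self-averaging before fam is available, and producing the ``conditioned'' Keisler--Morley measures in Step~2. For definability, my first attempt is to show that $F^\varphi_\mu$ is a uniform limit of $M$-definable predicates of the form $\tp(b/M)\mapsto\lambda(\varphi(x_i,b))$, with uniformity supplied by Step~1. The remaining continuous-logic changes look routine: replacing formulas by $[0,1]$-valued formulas and sets by level conditions $\varphi\leq r$.
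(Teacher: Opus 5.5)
Step~1 cannot be carried out as written before $\mu$ is known to be definable, and your route to definability is circular.

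In the limiting argument, the badness condition $|\lambda_N(\varphi(x_i,b_N))-\mu(\varphi(x,b_N))|>\e$ involves $F^\varphi_\mu$, which is only Borel on $S_y(M)$. In the limit you learn only that $\lambda(\varphi(x_i,b))$ stays $\e$-far from $\lim_N F^\varphi_\mu(\tp(b_N/M))$. That number has no relation to $F^\varphi_\mu(\tp(b/M))$, so self-averaging is not contradicted. Observing that the $\mu$-side depends only on $\tp(b/M)$, or splitting into sign cases, does not repair this. Automorphisms over $M$ also cannot bring the $b_N$ to a common type over $M$, since they preserve $\tp(b_N/M)$.

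Deriving definability from Step~1 afterwards also fails. For a global $\lambda\in\KM(\mu/M)$ the map $\tp(b/M)\mapsto\lambda(\varphi(x_i,b))$ is not well defined unless $\lambda$ is $M$-invariant. For the natural invariant choice $\lambda=\mu^{(\omega)}$, it is just $F^\varphi_\mu$ itself.

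The missing idea is a closed-graph argument giving definability directly. This is essentially the closedness argument behind \cite[Corollary 2.4]{CGH2}, which the paper invokes.
\begin{enumerate}
\item Given a net $q_\alpha\to q$ in $S_y(M)$, pick $b_\alpha\models q_\alpha$. Let $\sigma_\alpha\in\KM_y(\mu/M)$ be given by $\sigma_\alpha(\theta(\bfx,y))=\mu^{(\omega)}(\theta(\bfx,b_\alpha))$ for $\cL_M$-formulas $\theta$. Then $\sigma_\alpha|_y=q_\alpha$ and $\sigma_\alpha(\varphi(x_i,y))=F^\varphi_\mu(q_\alpha)$ for every $i$.
\item A subnet converges to some $\sigma\in\KM_y(\mu/M)$ with $\sigma|_y=q$ and $\sigma(\varphi(x_i,y))=L$ for all $i$, where $L$ is the limit of $F^\varphi_\mu(q_\alpha)$ along the subnet.
\item Lemma~\ref{lem:weird-extension} gives $\lambda\in\KM(\mu/M)$ and $b\models q$ with $\lambda(\varphi(x_i,b))=L$ for all $i$. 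Self-averaging then forces $L=F^\varphi_\mu(q)$.
\end{enumerate}
Hence $F^\varphi_\mu$ is continuous. Only then does your uniformity statement follow, by a routine compactness argument that again uses Lemma~\ref{lem:weird-extension} to realize limits.

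Step~2 is the more serious gap. Its target is that $\mu^{(n)}|_M$ gives positive measure to the set of $\bar x$ with $\tsup{b}|\Av(\bar x)(\varphi(x,b))-\mu(\varphi(x,b))|\leq 3\e$. This puts the supremum over parameters inside the measure, so it is a fim-type statement. Self-averaging, and your Step~1, only control a Keisler--Morley measure against one fixed parameter $b$. A bad parameter for a sample, however, depends on the sample. Bridging the two requires handling $\sigma\in\KM_y(\mu/M)$ whose $y$-marginal is a genuine measure rather than a type. ``Conditioning on finitely many coordinates plus a Markov argument'' supplies no mechanism for this.

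In the paper, that bridge is exactly Theorem~\ref{thm:SAtorgs}: Hewitt--Savage extremality of $\mu^{(\omega)}|_M$, ergodic decomposition of symmetric measures, and the averaging argument. It is followed by the coding trick of Theorem~\ref{thm:rgstofim}, which passes from $|\Eb[D^\varphi_{\mu,n}]|$ to $\Eb[|D^\varphi_{\mu,n}|]$. Theorem~\ref{thm:SAtorgs} uses the present lemma (definability and self-commutation) as input, so that route is unavailable here.

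Step~2 is also more than you need. Once $F^\varphi_\mu$ is an $M$-definable predicate, $\tsup{y}|\Av(\bar x)(\varphi(x,y))-F^\varphi_\mu(y)|<\e$ is an $\cL_M$-condition. So fam only asks for a single good tuple, which may be found anywhere in $\cU$ and moved into $M$ by elementarity.

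After definability, the paper obtains fam by citing \cite[Theorem 2.7(b)]{CGH2}, with Lemma~\ref{lem:weird-extension} as one ingredient. It then gets self-commutation from fam by a direct exercise. It explicitly avoids relying on a continuous-logic version of ``fam commutes with definable measures,'' so your appeal to \cite{AndGS} for that is not supported.
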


We will also use the following continuous analogue of  \cite[Lemma 2.9]{CGH2}. 

\begin{lemma}\label{lem:weird-extension}
 Suppose $\mu\in\kM_x(\cU)$ is Borel-definable over $M\prec\cU$ and   $\sigma \in \KM_y(\mu/M)$ is such that  $\sigma |_y$ is a type in $S_y(M)$. Fix $b\in\Uc^y$ realizing $\sigma|_y$. Then there is  some $\lambda \in \KM(\mu/M)$ such that, for any $\cL_M$-formula $\psi(\bfx,y)$, $\lambda(\psi(\bfx,b)) = \sigma(\psi(\bfx,y))$.
\end{lemma}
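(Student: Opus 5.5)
The plan is to pass through the space of measures over $Mb$: first turn $\sigma$ into a measure $\sigma'\in\kM_{\bfx}(Mb)$ with $\sigma'(\psi(\bfx,b))=\sigma(\psi(\bfx,y))$, and then extend $\sigma'$ to a global measure $\lambda$. Let $p=\sigma|_y\in S_y(M)$, which by hypothesis is the type $\tp(b/M)$. View $\sigma$ as a regular Borel probability measure on $S_{\bfx y}(M)$, and let $\pi\colon S_{\bfx y}(M)\to S_y(M)$ be the restriction map. Since $\sigma|_y=\pi_*\sigma=\delta_p$, the closed fiber $F=\pi^{-1}(p)$ has $\sigma(F)=1$.

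The key step is to identify $F$ with $S_{\bfx}(Mb)$. Define $\rho\colon S_{\bfx}(Mb)\to F$ by $\rho(\tp(\boldsymbol{a}/Mb))=\tp(\boldsymbol{a}b/M)$.
\begin{enumerate}[$(1)$]
\item The map $\rho$ is continuous, since for each $\cL_M$-formula $\psi(\bfx,y)$ the value of $\psi$ at $\rho(s)$ is the value of $\psi(\bfx,b)$ at $s$.
\item It is onto $F$: if $\boldsymbol{a}'b'\models q\in F$, then $b'\equiv_M b$, so by an automorphism over $M$ we obtain $\boldsymbol{a}$ with $\boldsymbol{a}b\models q$.
\item It is injective: $\tp(\boldsymbol{a}/Mb)$ is determined by the values $\psi(\boldsymbol{a},b)$, and these are read off from $\tp(\boldsymbol{a}b/M)$.
\end{enumerate}
Since both spaces are compact Hausdorff, $\rho$ is a homeomorphism. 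Define $\sigma'=(\rho^{-1})_*(\sigma|_F)\in\kM_{\bfx}(Mb)$. Then for every $\cL_M$-formula $\psi(\bfx,y)$,
\[
\sigma'(\psi(\bfx,b))=\int_F\psi\,d\sigma=\sigma(\psi(\bfx,y)).
\]
This identity also handles well-definedness in continuous logic: two formulas giving the same definable predicate over $Mb$ agree on every $q\in F$, because $\sup_{\bfx}|\psi-\psi'|(y)=0$ lies in $p$. Taking $\psi$ without $y$ gives $\sigma'|_M=\sigma|_{\bfx}=\mu^{(\omega)}|_M$.

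Finally, extend $\sigma'$ to some $\lambda\in\kM_{\bfx}(\cU)$ with $\lambda|_{Mb}=\sigma'$. The restriction map $S_{\bfx}(\cU)\to S_{\bfx}(Mb)$ is a continuous surjection of compact Hausdorff spaces, so pushforward of regular Borel probability measures along it is surjective. This is the standard fact that Keisler measures extend to larger parameter sets, obtainable for instance by Hahn--Banach and Riesz representation. Then $\lambda|_M=\mu^{(\omega)}|_M$, so $\lambda\in\KM(\mu/M)$, and $\lambda(\psi(\bfx,b))=\sigma(\psi(\bfx,y))$ as required.

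The main obstacle I expect is the fiber identification in the continuous setting with infinitely many variables $\bfx$: one must check that $\rho$ is a well-defined homeomorphism, and that the integrals of definable predicates transfer correctly, not just the integrals of formulas. This follows from uniform density of formulas among definable predicates, since both sides of the displayed identity are continuous under uniform limits.
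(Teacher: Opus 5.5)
Your proof is correct and follows the same route as the paper: first build a measure in $\kM_{\bfx}(Mb)$ with $\psi(\bfx,b)\mapsto\sigma(\psi(\bfx,y))$, then take any global extension, which the paper obtains from standard functional analysis (Ditor--Eifler). Your identification of $S_{\bfx}(Mb)$ with the fiber of the restriction map over $\tp(b/M)$ is a clean way to carry out the step the paper leaves as a ``straightforward exercise''.
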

\begin{proof}
The proof is essentially identical to \cite{CGH2}. By a straightforward exercise, there is a well-defined measure $\lambda_0 \in \frk{M}_{\bfx}(Mb)$ such that $\lambda_0(\psi(\bfx,b)) = \sigma(\psi(\bfx,y))$ for any $\cL_M$-formula $\psi(\bfx,y)$. Now let $\lambda\in\kM_{\bfx}(\cU)$ be a global extension (which exists by standard results in functional analysis, e.g., \cite[Section 1]{DitEif}).
\end{proof}

Given $n\geq 1$, let $\Sym(n)$ denote the group of permutations of $\omega$ that fix every point $i\geq n$. Let $\fSym(\omega)=\bigcup_{n\geq 1}\Sym(n)$. Now suppose $y$ is a (possibly empty) tuple of variables. Given $\sigma\in\kM_{\bfx y}(M)$ and $\gamma\in \fSym(\omega)$, let $\gamma_*\sigma$ denote the measure in $\kM_{\bfx y}(M)$ obtained as the pushforward of $\sigma$ along the map from $S_{\bfx y}(M)$ to $S_{\bfx y}(M)$ which permutes the variables in $\bfx$ according to $\gamma$. We say that $\sigma$ is \textbf{$\gamma$-invariant} if $\gamma_*\sigma=\sigma$, and $\sigma$ is \textbf{$\fSym(\omega)$-invariant} if it is $\gamma$-invariant for all $\gamma\in\fSym(\omega)$. Define 
\[
\kM^{\sym}_{\bfx y}(M)=\{\sigma\in\kM_{\bfx y}(M):\sigma\text{ is $\fSym(\omega)$-invariant}\}.
\]
Note that $\kM^{\sym}_{\bfx y}(M)$ is a closed convex set in $\kM_{\bfx y}(M)$. 

The next lemma is an adaptation of a classical result of Hewitt and Savage \cite{HewSav}. See Remark \ref{rem:HS} below for further details. 

\begin{lemma}\label{lem:ergodic}
Suppose $\mu\in\kM_x(\cU)$ is definable over $M\prec\cU$ and self-commuting. Then $\mu^{(\omega)}|_M$ is an extreme point in $\kM^{\sym}_{\bfx}(M)$. 
\end{lemma}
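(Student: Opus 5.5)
Suppose $\mu^{(\omega)}|_M=t\sigma_1+(1-t)\sigma_2$ with $0<t<1$ and $\sigma_1,\sigma_2\in\kM^{\sym}_{\bfx}(M)$. The plan is to show $\sigma_1=\sigma_2=\mu^{(\omega)}|_M$ by a Hewitt--Savage style second-moment argument. Since measures in $\kM_{\bfx}(M)$ are determined by their values on formulas in finitely many variables, and by symmetry on products of formulas in disjoint blocks of variables, it suffices to show that each $\sigma_j$ agrees with $\mu^{(\omega)}|_M$ on all $\cL_M$-formulas $\theta(x_0,\ldots,x_{k-1})$. This class includes products of continuous formulas.

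The key tool is averages of shifted copies. Fix $\theta(x_0,\ldots,x_{k-1})$ over $M$. For $m\geq1$, let $\theta_\ell$ denote $\theta$ applied to the block $x_{\ell k},\ldots,x_{\ell k+k-1}$, and set
\[
A_m=\frac1m\sum_{\ell<m}\theta_\ell .
\]
This is a continuous function on $S_{\bfx}(M)$. Under $\mu^{(\omega)}|_M$, the blocks behave like independent copies. Specifically, for $\ell\neq\ell'$, associativity of Morley products for definable measures gives
\[
\mu^{(\omega)}(\theta_\ell\theta_{\ell'})=\mu^{(k)}(\theta)^2,
\]
and self-commutativity makes the Morley power independent of the ordering of blocks. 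Set $c=\mu^{(k)}(\theta)$. Then $\int (A_m-c)^2\,d\mu^{(\omega)}|_M=O(1/m)$. This is the main step to check carefully: the cross terms $\theta_\ell\theta_{\ell'}$ integrate to $c^2$ because $\mu^{(\omega)}|_{x_{<2k}}$, restricted to the two relevant blocks and reordered, equals $\mu^{(k)}\otimes\mu^{(k)}$. This is where self-commutativity and definability are used. Without commutativity one would only control ordered pairs, but symmetric averaging would still need it in the next step.

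Since $\sigma_j\leq t^{-1}\mu^{(\omega)}|_M$ (or $(1-t)^{-1}\mu^{(\omega)}|_M$) as measures, we get $\int (A_m-c)^2\,d\sigma_j\to0$, and hence $\int A_m\,d\sigma_j\to c$. By $\fSym(\omega)$-invariance of $\sigma_j$, each $\int\theta_\ell\,d\sigma_j=\sigma_j(\theta)$, so $\int A_m\,d\sigma_j=\sigma_j(\theta)$. Therefore $\sigma_j(\theta)=c=\mu^{(\omega)}|_M(\theta)$.

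The measure domination must be justified for nonnegative continuous functions. This is immediate, since $(A_m-c)^2\geq0$ and integration is linear in the measure.

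This holds for every $\theta$ in finitely many variables, so $\sigma_j=\mu^{(\omega)}|_M$, which proves extremality. The obstacle I expect is purely the independence computation. It requires that $\mu^{(\omega)}|_M$ restricted to any two disjoint blocks is $\mu^{(k)}\otimes\mu^{(k)}$. This comes from $\mu^{(2k)}=\mu^{(k)}\otimes\mu^{(k)}$ (associativity), together with the block-permutation invariance of $\mu^{(\omega)}$ obtained from self-commutativity. The latter should be derived by noting that self-commuting definable measures have symmetric Morley powers, via associativity and swapping adjacent factors.
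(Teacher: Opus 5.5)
Your proposal is correct and is essentially the paper's proof. It runs the same Hewitt--Savage second-moment argument: the variance of averages over disjoint blocks is computed under $\mu^{(\omega)}|_M$ using independence of the blocks, transferred to $\sigma_j$ via the domination $t\sigma_1\leq\mu^{(\omega)}|_M$, and combined with $\fSym(\omega)$-invariance of $\sigma_j$ and Cauchy--Schwarz.

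The differences are cosmetic. You reduce to $\cL_M$-formulas and use consecutive blocks, whereas the paper applies Stone--Weierstrass to continuous functions depending on finitely many coordinates and uses arbitrary disjoint blocks. Note that for consecutive blocks taken in increasing order, the cross-term computation needs only associativity, not self-commutativity. So self-commutativity is really used only to get $\mu^{(\omega)}|_M\in\kM^{\sym}_{\bfx}(M)$ in the first place.
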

\begin{proof}
Given $\lambda\in\kM_{\bfx}(M)$ and $f\in C(S_{\bfx}(M),\R)$, to simplify notation we will write $\lambda(f)$ for $\int_{S_{\bfx}(M)}f\,d\lambda$. 

Set $\pi=\mu^{(\omega)}|_M$. Since $\mu$ is self-commuting, it follows that $\pi$ is in $\kM_{\bfx}^{\sym}(M)$. Suppose we can write $\pi=t\lambda+(1-t)\lambda'$ for some $\lambda,\lambda'\in\kM^{\sym}_{\bfx}(M)$ and $t\in (0,1)$. We want to show $\lambda=\pi$.  By the Riesz representation theorem, it suffices to show that $\lambda(f)=\pi(f)$ for any $f\in C(S_{\bfx}(M),\R)$.

Given $I\seq \omega$, we write $S_I(M)$ for $S_{x_I}(M)$ and, if $p\in S_{\bfx}(M)$ then we write $p|_I$ for $p|_{x_I}$. Given $f\in C(S_{\bfx}(M),\R)$ and $I\seq\omega$, we say that $f$ is \emph{$I$-invariant} if, for any $p,q\in S_{\bfx}(M)$, we have $f(p)=f(q)$ whenever $p|_I=q|_I$. Let $\mathcal{A}$ be the set of $f\in C(S_{\bfx}(M),\R)$ that are $I$-invariant for some finite $I\seq\omega$. It is straightforward to show that $\mathcal{A}$ is a unital subalgebra of $C(S_{\bfx}(M),\R)$ that separates points in $S_{\bfx}(M)$. Therefore, by Stone-Weierstrass and the fact that integration commutes with uniform limits, it suffices to show $\lambda(f)=\pi(f)$ for all $f\in\mathcal{A}$. 

Fix $f\in\mathcal{A}$ and fix a finite set $I\seq\omega$ such that $f$ is $I$-invariant. Let $n\geq 1$ be arbitrary and fix pairwise disjoint sets $I_1,\ldots,I_n\seq\omega$ each of size $|I|$. For $t\in[n]$, let $\alpha_t\colon S_{I_t}(M)\to S_{I}(M)$ be a variable substitution map defined from some fixed bijection between $I_t$ and $I$. Define $f_t\in C(S_{\bfx}(M),\R)$ so that, given $p\in S_{\bfx}(M)$, $f_t(p)=f(q)$ where $q\in S_{\bfx}(M)$ satisfies $q|_I=\alpha_t(p|_{I_t})$ (note that $f(q)$ is well-defined by $I$-invariance). Note that each $f_t$ is a finite-coordinate permutation of $f$.

Set $a=\pi(f)$ and $b=\pi(f^2)$. Define  $h=\sum_{t=1}^n (f_t-a)$.\medskip

\noindent\textit{Claim.} $\pi(h^2)=n(b-a^2)$.

\noindent\textit{Proof.} Since $\pi$ is  $\fSym(\omega)$-invariant, we have $\pi(f_t)=a$ and $\pi(f_t^2)=b$ for all $t\in [n]$. Recall $\pi=\mu^{(\omega)}|_M$. Together with   $\fSym(\omega)$-invariance, one sees that for any distinct $s,t\in [n]$,  we have
\[
\pi(f_sf_t)=(\mu^{(I_s)}\otimes\mu^{(I_t)})(f_s f_t)=\mu^{(I_s)}(f_s)\mu^{(I_t)}(f_t)=\pi(f_s)\pi(f_t).
\]
So by linearity (and since $\pi$ is a probability measure),
\[
\pi((f_s-a)(f_t-a))=\pi(f_s f_t)-a\pi(f_s)-a\pi(f_t)+a^2=a^2-2a^2+a^2=0.
\]
This implies
\[
\pi(h^2) = \sum_{t=1}^n\pi((f_t-a)^2)=\sum_{t=1}^n\big(\pi(f_t^2)-2a\pi(f_t)+a^2\big)=n(b-a^2).\clqed
\]

Now, since $\lambda$ is $\fSym(\omega)$-invariant, we have $\lambda(h)=n(\lambda(f)-a)$ by linearity. Therefore
\begin{multline*}
|\lambda(f)-\pi(f)|=|\lambda(f)-a|=\frac{|\lambda(h)|}{n}\leq \left(\frac{\lambda(h^2)}{n^2}\right)^{1/2}\\
\leq \left(\frac{t\inv\pi(h^2)}{n^2}\right)^{1/2}=\left(\frac{t\inv (b-a^2)}{n}\right)^{1/2},
\end{multline*}
where the first inequality uses Cauchy-Schwarz (recall that $\lambda$ is a probability measure), the second inequality uses $\pi=t\lambda+(1-t)\lambda'$ and the fact that $h^2$ is nonnegative on $S_{\bfx}(M)$, and the final equality uses the claim.
Since $n$ was arbitrary, this yields $\lambda(f)=\pi(f)$, as desired.
\end{proof}

\begin{remark}
Note that in the previous lemma, the assumption that $\mu$ is definable over $M$ can be weakened to the assumption that all Morley powers $\mu^{(n)}$ are Borel-definable over $M$, and the Morley product is associative for such powers. In \cite{GHevents}, this condition is referred to as ``$M$-adequacy", and ``$M$-excellence" is defined to be $M$-adequacy plus self-commuting. Thus the proof of Lemma \ref{lem:ergodic} shows that if $\mu\in\kM_x(\cU)$ is $M$-excellent, then $\mu^{(\omega)}|_M$ is an extreme point in $\kM^{\sym}_{\bfx}(M)$. 
\end{remark}

\begin{remark}\label{rem:HS}
The aforementioned work of Hewitt and Savage establishes a direct analogue of Lemma \ref{lem:ergodic} for symmetric product measures on infinite Cartesian powers of a measurable space $X$ (see Theorems 5.1 and 5.2 of \cite{HewSav}). We cannot apply these results directly in our context since $S_{\bfx}(\cU)$ is not a genuine product space. However, the proof of Lemma \ref{lem:ergodic} follows essentially the same second-moment argument from this earlier work. Similar results appear in \cite[Theorem 2.3]{Hoover} and \cite[Section 6]{Matus}. 
\end{remark}

We can now prove the main result of this section.

\begin{theorem}\label{thm:SAtorgs}
Suppose $\mu\in\kM_x(\cU)$ is self-averaging over $M\prec\cU$. Then $r_\mu$ is generically stable over $M^\Omega$.
\end{theorem}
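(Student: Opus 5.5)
The plan is to verify condition $(ii)$ of Proposition \ref{prop:SSA}. By Lemma \ref{lem:SAfam}, $\mu$ is definable over $M$ and self-commuting, so Proposition \ref{prop:SSA} applies and Lemma \ref{lem:ergodic} applies. Set $\pi=\mu^{(\omega)}|_M$.

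Fix $\sigma\in\KM_y(\mu/M)$ and an $\cL_M$-formula $\varphi(x,y)$, and suppose toward a contradiction that $\sigma(\varphi(x_i,y))\not\to(\mu\otimes\sigma|_y)(\varphi(x,y))$. Note that for any increasing $\iota\colon\omega\to\omega$, the pushforward of $\sigma$ along the reindexing $x_k\mapsto x_{\iota(k)}$ is again in $\KM_y(\mu/M)$, since $\pi$ is $\fSym(\omega)$-invariant. The quantity to be compared with the limit, $(\mu\otimes\sigma|_y)(\varphi)$, depends only on $\sigma|_y$, and this marginal is unchanged by reindexing.

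\textbf{Step 1: reduce to spreadable $\sigma$.} Pass to a subsequence along which $\sigma(\varphi(x_i,y))$ converges to some $c\neq(\mu\otimes\sigma|_y)(\varphi)$. Then apply a Ramsey and compactness argument in the compact space $\kM_{\bfx y}(M)$: iteratively thin out the indices so that, for every finite set of formulas, the joint distributions of increasing $k$-tuples $(x_{i_1},\dots,x_{i_k},y)$ converge. Taking a weak$^*$ limit of reindexed copies produces $\sigma'\in\KM_y(\mu/M)$ that is \emph{spreadable} (invariant under increasing reindexings of $\bfx$), with $\sigma'|_y=\sigma|_y$ and $\sigma'(\varphi(x_i,y))=c$ for all $i$.

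\textbf{Step 2: spreadable implies exchangeable.} Upgrade spreadability of $\sigma'$ to $\fSym(\omega)$-invariance by a Ryll-Nardzewski type theorem. This is where the cited work of de la Rue and Ryzhikov on dynamics of convex sets enters; it is needed because $S_{\bfx y}(M)$ is not a product space, so the classical theorem does not apply verbatim. I expect this to be the main obstacle. The first attempt would be to show that the action of increasing maps on the closed convex set of spreadable measures forces invariance under transpositions, by an ergodic-averaging argument on the functions from the algebra $\mathcal{A}$ of Lemma \ref{lem:ergodic}.

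\textbf{Step 3: disintegrate and use extremality.} Now $\sigma'$ is exchangeable. Disintegrate $\sigma'$ over $S_y(M)$ as $\sigma'=\int\sigma'_q\,d\sigma'|_y(q)$, where each $\sigma'_q$ concentrates on the fiber over $q$. Exchangeability of $\sigma'$ gives that almost every $\sigma'_q|_{\bfx}$ lies in $\kM^{\sym}_{\bfx}(M)$; to see this, use countably many test functions from $\mathcal{A}$ and countably many $\gamma\in\fSym(\omega)$, working on the appropriate countable fragment. Integrating gives $\int\sigma'_q|_{\bfx}\,d\sigma'|_y=\pi$. Since $\pi$ is extreme by Lemma \ref{lem:ergodic}, $\sigma'_q|_{\bfx}=\pi$ for almost every $q$. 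Otherwise, a positive-measure set on which some test integral exceeds its $\pi$-value would split $\pi$ as a nontrivial convex combination of two measures in $\kM^{\sym}_{\bfx}(M)$.

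\textbf{Step 4: conclude.} For such $q$, $\sigma'_q\in\KM_y(\mu/M)$ has $y$-marginal the type $q$. Lemma \ref{lem:weird-extension} then yields $\lambda_q\in\KM(\mu/M)$ and $b_q\models q$ with $\lambda_q(\varphi(x_i,b_q))=\sigma'_q(\varphi(x_i,y))$. Self-averaging gives
\[
\sigma'_q(\varphi(x_i,y))\to\mu(\varphi(x,b_q))=F^\varphi_\mu(q).
\]
By dominated convergence,
\[
c=\lim_i\sigma'(\varphi(x_i,y))=\int F^\varphi_\mu\,d\sigma'|_y=(\mu\otimes\sigma|_y)(\varphi),
\]
which contradicts the choice of $c$. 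Proposition \ref{prop:SSA} then gives that $r_\mu$ is generically stable over $M^\Omega$.
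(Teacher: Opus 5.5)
Your Step 2 is a genuine gap. The problem is not just that $S_{\bfx y}(M)$ fails to be a product space: the implication ``spreadable $\Rightarrow$ exchangeable'' is false for measures on type spaces.

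\begin{itemize}
\item A Dirac measure at the type of a sequence that is indiscernible, but not totally indiscernible, over $Mb$ is spreadable but not exchangeable.
\item This happens even inside $\KM_y(\mu/M)$ for definable self-commuting $\mu$. Work in the generic structure with one ternary relation $R$, and let $\mu$ be the ($\emptyset$-definable, self-commuting) global type of a new element lying in no instance of $R$. Take $(a_i)_{i<\omega}\models\mu^{(\omega)}|_M$ together with $b$ such that $R(a_i,a_j,b)$ holds iff $i<j$, and no other $R$-instance involves $b$ and the $a_i$. Then $\delta_{\tp((a_i)_{i<\omega}b/M)}\in\KM_y(\mu/M)$ is spreadable but not exchangeable.
\end{itemize}

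So Step 2 must use self-averaging in an essential way, and you give no mechanism for that. Self-averaging only controls a single $x_i$ against a parameter. Exchangeability of $\sigma'$ concerns pairs $(x_i,x_j)$ jointly with $y$, and treating $x_j$ as part of the parameter is exactly the ``random parameter'' problem the theorem is solving. Already for types, total indiscernibility of such sequences over $Mb$ is derived from generic stability itself, via the no-order-property characterization with parameters taken from the sequence. In effect, Step 2 carries the whole content of the theorem. Also, the de la Rue--Ryzhikov input the paper cites is not a Ryll-Nardzewski theorem; it is a symmetrization-by-averaging trick.

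That trick makes Steps 1--2 unnecessary. After passing to a subsequence and possibly replacing $\varphi$ by $-\varphi$, arrange $\sigma(\varphi(x_i,y))-(\mu\otimes\sigma|_y)(\varphi(x,y))\geq\epsilon$ for every $i$. Then average: $\sigma_n=\frac{1}{n!}\sum_{\gamma\in\Sym(n)}\gamma_*\sigma$.
\begin{itemize}
\item Self-commuting keeps $\sigma_n\in\KM_y(\mu/M)$, and the $y$-marginal is unchanged.
\item The continuous affine functional $\ell_\varphi(\tau)=\tau(\varphi(x_0,y)-F^\varphi_\mu(y))$ satisfies $\ell_\varphi(\sigma_n)\geq\epsilon$, because every index carries the same one-sided gap.
\item A limit point $\sigma'$ is then $\fSym(\omega)$-invariant with $\ell_\varphi(\sigma')\geq\epsilon$.
\end{itemize}

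The paper also avoids your disintegration in Step 3, which is delicate here. $S_y(M)$ need not be metrizable. A countable-fragment reduction would not directly give fibers whose $y$-marginal is a complete type over $M$ and whose $\bfx$-marginal is all of $\mu^{(\omega)}|_M$, which Lemma~\ref{lem:weird-extension} and self-averaging require. Instead, the paper works with extreme points of the compact convex set $\mathcal{K}=\kM^{\sym}_{\bfx y}(M)\cap\KM_y(\mu/M)$:
\begin{itemize}
\item By Lemma~\ref{lem:ergodic}, an extreme point of $\mathcal{K}$ is extreme in $\kM^{\sym}_{\bfx y}(M)$, hence ergodic, hence has $y$-marginal a type.
\item Your Step 4 argument then gives $\ell_\varphi=0$ at such points.
\item Krein--Milman, using continuity and affineness of $\ell_\varphi$, extends this to all of $\mathcal{K}$.
\end{itemize}

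Within your own framework you could alternatively drop Step 2 altogether. The second-moment argument of Lemma~\ref{lem:ergodic} goes through for spreadable measures using order-preserving block shifts, so $\mu^{(\omega)}|_M$ is also extreme among spreadable measures. The disintegration issue would remain, though.
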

\begin{proof}
First, by Lemma \ref{lem:SAfam}, $\mu$ is definable over $M\prec\cU$, and hence $r_\mu$ exists. Also, $\mu$ is self-commuting. We will prove generic stability of $r_\mu$ using condition $(ii)$ of Proposition \ref{prop:SSA}. Toward this end, fix a tuple $y$ of variables. Define
\[
\mathcal{K}=\kM^{\sym}_{\bfx y}(M)\cap \KM_y(\mu/M).
\]
 Note that $\mathcal{K}$ is a closed convex set in $\kM_{\bfx y}(M)$.\medskip

\noindent\textit{Claim 1.} Suppose $\sigma$ is an extreme point in $\mathcal{K}$. Then $\sigma|_y$ is a type in $S_y(M)$.

\noindent\textit{Proof.} 
This follows from Lemma \ref{lem:ergodic} using standard facts and basic exercises in convexity theory. For the sake of clarity, we include some explanation. First, we claim that $\sigma$ is also an extreme point in $\kM^{\sym}_{\bfx y}(M)$ (cf. \cite[Lemma 2.1]{BRZK}). Indeed,  suppose $\sigma=t\sigma_1+(1-t)\sigma_2$, where $t\in (0,1)$ and $\sigma_1,\sigma_2\in\kM^{\sym}_{\bfx y}(M)$. Restricting to $\bfx$, we have $\mu^{(\omega)}|_M=t\sigma_1|_{\bfx}+(1-t)\sigma_2|_{\bfx}$. So by Lemma \ref{lem:ergodic}, $\mu^{(\omega)}|_M=\sigma_1|_{\bfx}=\sigma_2|_{\bfx}$. Therefore $\sigma_1,\sigma_2\in \mathcal{K}$, and hence $\sigma=\sigma_1=\sigma_2$ since $\sigma$ is extreme in $\mathcal{K}$. 

Now, since $\sigma$ is an extreme point in $\kM^{\sym}_{\bfx y}(M)$, it follows that $\sigma$ is ergodic with respect to the ambient action of $\fSym(\omega)$ on $S_{\bfx y}(M)$ (see, e.g., \cite[Proposition 12.4]{phelps-book}). To explain how this yields the claim, fix a Borel set $B\seq S_y(M)$ and set $B'=\{p\in S_{\bfx y}(M):p|_y\in B\}$. Then  $B'$ is a Borel set in $S_{\bfx y}(M)$ and $\sigma(B')=\sigma|_y(B)$. Since $B'$ is $\fSym(\omega)$-invariant,  ergodicity of $\sigma$ implies $\sigma|_y(B)$ is either $0$ or $1$. Therefore $\sigma|_y$ is a type in $S_y(M)$.\clqed\medskip

Now, given an $\cL_M$-formula $\varphi(x,y)$, define $\ell_\varphi\colon \KM_y(\mu/M)\to\R$ so that 
\[
\ell_\varphi(\sigma)= \sigma(\varphi(x_0,y))-(\mu\otimes\sigma|_y)(\varphi(x,y)).
\]
In other words, $\ell_\varphi(\sigma)=\sigma(\varphi(x_0,y)-F^\varphi_\mu(y))$. Since $\mu$ is definable, it follows that $\ell_\varphi$ is continuous. Moreover, $\ell_\varphi$ is an affine function  in the sense of \cite[p. 13]{phelps-book}. 

\medskip

\noindent\textit{Claim 2.} If $\sigma\in\mathcal{K}$, then $\ell_\varphi(\sigma)=0$ for any $\cL_M$-formula $\varphi(x,y)$.

\noindent\textit{Proof.} Fix an $\cL_M$-formula $\varphi(x,y)$. We want to show that $\ell_\varphi$ is identically $0$ on $\mathcal{K}$. By \cite[Proposition 16.6]{phelps-book} (applied to $\ell_\varphi$ and $-\ell_\varphi$), it suffices to show that $\ell_\varphi(\sigma)=0$ for any extreme point $\sigma$ of $\mathcal{K}$. 

Fix an extreme point $\sigma$ of $\mathcal{K}$. By Claim 1, $\sigma|_y$ is a type in $S_y(M)$. Fix $b\in\cU^y$ realizing $\sigma|_y$ and, using Lemma \ref{lem:weird-extension}, fix $\lambda\in\KM(\mu/M)$  such that, for any $\cL_M$-formula $\theta(\bfx,y)$, $\lambda(\theta(\bfx,b))=\sigma(\theta(\bfx,y))$. Since $\sigma$ is $\fSym(\omega)$-invariant, we have that for all $i<\omega$,
\[
\lambda(\varphi(x_i,b))=\sigma(\varphi(x_i,y))=\sigma(\varphi(x_0,y))=\lambda(\varphi(x_0,b)).
\]
Therefore, since $\mu$ is self-averaging over $M$, we have
\[
\mu(\varphi(x,b))=\lim_{i\to\infty}\lambda(\varphi(x_i,b))=\lambda(\varphi(x_0,b)).
\]
 Altogether,
\[
\sigma(\varphi(x_0,y))=\lambda(\varphi(x_0,b))=\mu(\varphi(x,b))=(\mu\otimes\sigma|_y)(\varphi(x,y)),
\]
as desired.\clqed\medskip

Now, toward a contradiction, suppose there is  $\sigma\in\KM_y(\mu/M)$ and an $\cL_M$-formula $\varphi(x,y)$ such that $\lim_{i\to\infty}\sigma(\varphi(x_i,y))\neq(\mu\otimes\sigma|_y)(\varphi(x,y))$. After passing to a subsequence and relabeling, we may assume that for some fixed $\epsilon>0$, we have $|\sigma(\varphi(x_i,y))-(\mu\otimes\sigma|_y)(\varphi(x,y))|\geq\epsilon$ for all $i<\omega$ (note that $\KM_y(\mu/M)$ is closed under substitution by  infinite subsequences of $\bfx$). After passing to a further subsequence, and replacing $\varphi(x,y)$ with $-\varphi(x,y)$ if necessary, we may assume that for all $i<\omega$,
\begin{equation*}
\sigma(\varphi(x_i,y))-(\mu\otimes\sigma|_y)(\varphi(x,y))\geq\epsilon.\tag{$\ast$}
\end{equation*}

For each integer $n\geq 1$, define 
\[
\sigma_n=\frac{1}{n!}\sum_{\gamma\in\Sym(n)}\gamma_*\sigma.
\]
Since $\mu$ is self-commuting and $\sigma|_{\bfx}=\mu^{(\omega)}|_M$, it follows that $(\gamma_*\sigma)|_{\bfx}=\mu^{(\omega)}|_M$ for any $\gamma\in\fSym(\omega)$, and hence $\sigma_n|_{\bfx}=\mu^{(\omega)}|_M$ for all $n\geq 1$. Thus $\sigma_n\in\KM_y(\mu/M)$ for all $n\geq 1$. Moreover, given $\gamma\in\fSym(\omega)$, since $\gamma_*\sigma$ only differs from $\sigma$ by a permutation of   $\bfx$, we have $(\gamma_*\sigma)|_y=\sigma|_y$. Therefore, given $n\geq 1$, we have
\begin{align*}
\ell_\varphi(\sigma_n) &= \frac{1}{n!}\sum_{\gamma\in\Sym(n)}\big((\gamma_*\sigma)(\varphi(x_0,y))-(\mu\otimes(\gamma_*\sigma)|_y)(\varphi(x,y))\big)\\
 &= \frac{1}{n!}\sum_{\gamma\in\Sym(n)}\big(\sigma(\varphi(x_{\gamma(0)},y))-(\mu\otimes\sigma|_y)(\varphi(x,y))\big)\\
 &\geq \epsilon,
\end{align*}
where the final inequality follows from $(\ast)$. Let $\sigma'\in\KM_y(\mu/M)$ be a limit point of the set $\{\sigma_n:n\geq 1\}$. By continuity, we have $\ell_\varphi(\sigma')\geq\e$. Given a fixed $\gamma\in\fSym(\omega)$,   $\sigma_n$ is $\gamma$-invariant for sufficiently large $n$ 
and thus, since the set of $\gamma$-invariant measures is closed, it follows that $\sigma'$ is $\gamma$-invariant. Therefore $\sigma'\in\mathcal{K}$. But then $\ell_\varphi(\sigma')=0$ by Claim 2, which is a contradiction. 
\end{proof}

\begin{remark}\label{rem:fixedpoint}
The final averaging construction at the end of the previous proof is similar to a ``joining argument" of Ryzhikov \cite{Ryzh}, as presented by de la Rue \cite[Theorem 2.4]{deLaRue}. This construction may also be viewed as a refinement of the standard averaging argument for obtaining fixed points in continuous affine actions of $\fSym(\omega)$ on compact convex sets (a special case of  Day's fixed-point characterization of amenability \cite{Day}). 
\end{remark}

\section{Morley products of generically stable measures}\label{sec:MP}

In this section we show that generically stable types are closed under Morley products in any (continuous) theory. Before getting to the proof, we first explain some history of this problem.

First, if $T$ is a discrete NIP theory then, by \cite[Proposition 3.2]{HP}, a type is generically stable if and only if it is definable and finitely satisfiable (in some small model). This immediately yields that generically stable types are closed under Morley products in this context (see also \cite[Observation 7.7]{Usvy}). The analogous characterization of \fim\ measures is proved in \cite{HPS}, and the adaptation of all of this to continuous NIP theories is done in \cite{AndGS} (see also \cite[Section 2]{KhGSmodes} for related results on generically stable types in continuous theories). 

Outside of NIP theories, the problem is more difficult since the fundamental characterization of generic stability as ``definable and finitely satisfiable" fails. In \cite{ACPgs}, Adler, Casanovas, and Pillay proposed an example of a (discrete) theory $T$ and a generically stable type $p$ such that $p\otimes p$ is not generically stable. This example was revisited by the first two authors in \cite{CoGa}, where it was further observed that the theory $T$ is NSOP$_1$ and TP$_2$. However, the third author later noticed that the type $p$ is in fact not well-defined. An explanation of the issue was given in \cite[Section 8.1]{CoGaHa} (see Remark 8.6), where we further showed that the theory $T$ in question has no nontrivial generically stable types. 

In the first draft of \cite{CoGaHa} posted to the arXiv, we claimed to prove that \fim\ measures in any discrete theory are closed under Morley products. A flaw in our argument was soon found by Silvain Rideau-Kikuchi and Paul Wang, which led to the removal of this result (see the discussion at the end of \cite[Section 6]{CoGaHa} for details). 
Then in \cite{CGH2}, we returned to the question, with a specific focus on generically stable types in discrete theories. We introduced the notion of a ``stable \emph{ict}-pattern", and showed that in a discrete theory, such a pattern characterizes the existence of two generically stable types whose Morley product is not generically stable. Using this, we proved that generically stable types are closed under Morley products in discrete NTP$_2$ theories. Around the same time, Kaplan, Ramsey, and Simon \cite{KRS} established that the same is true for \emph{treeless} theories. Beyond this, the question of preservation of generic stability under Morley products in arbitrary theories has remained open, even in the case of types. Here we will finally put the issue to rest.

We begin with a standard fact.

\begin{fact}\label{fact:commute}
Fix an integer $\ell\geq 1$ and a  model $M\prec\cU$. Suppose $p_1\in S_{x_1}(\cU),\ldots, p_\ell\in S_{x_\ell}(\cU)$ are each generically stable over $M$. Then for any set $C\supseteq M$ and any  Morley sequence $(a_{1,i}\ldots a_{\ell,i})_{i<\omega}$ in $p_1\otimes\ldots\otimes p_\ell$ over $C$, the sets $\{a_{t,i}:i<\omega\}$ for $1\leq t\leq \ell$ are mutually indiscernible over $C$.
\end{fact}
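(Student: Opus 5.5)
The plan is to reduce Fact \ref{fact:commute} to two standard properties of generically stable types: they are stationary/definable over $M$, and they commute with every $M$-invariant type. Combining these, we show that the joint type of any finite configuration of the $a_{t,i}$ depends only on the ordering within each row $t$, and then that it does not even depend on that ordering, because each row is totally indiscernible.

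\emph{Step 1 (total indiscernibility of each row).} Fix $t$. Since $p_1\otimes\cdots\otimes p_\ell$ restricted to $x_t$ is $p_t$, the sequence $(a_{t,i})_{i<\omega}$ is a Morley sequence in $p_t$ over $C$. Generic stability of $p_t$ over $M$, using the equivalent formulations in \cite[Fact 3.4]{CGH2} which include total indiscernibility of Morley sequences over any $C\supseteq M$, gives that $(a_{t,i})_i$ is totally indiscernible over $C$. If that formulation is only available over $M$, we reduce to $M$ by noting that generic stability is preserved when passing to the invariant extension over $C$.

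\emph{Step 2 (commutation and reordering).} The key tool is that a generically stable type $p$ commutes with any $M$-invariant global type $q$, i.e.\ $p_x\otimes q_y=q_y\otimes p_x$. In continuous logic this is again part of \cite[Fact 3.4]{CGH2}, or it follows from the limit definition: take $b\models q|_{\cU}$ and use $\lim_i \tp(a_i/\cU b)$. Iterating, the product $p_1\otimes\cdots\otimes p_\ell$ is invariant under any permutation of its factors. More generally, the type over $C$ of the finite array $(a_{t,i})_{t\le\ell,\,i<n}$ is
\[
\bigotimes_{i<n}\big(p_1\otimes\cdots\otimes p_\ell\big)\Big|_C,
\]
a Morley product of $n\ell$ generically stable types. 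Using commutation repeatedly, this product is invariant under any permutation of its $n\ell$ factors. In particular, for any permutations $\pi_1,\dots,\pi_\ell$ of $n$, the arrays $(a_{t,i})$ and $(a_{t,\pi_t(i)})$ have the same type over $C$. This is exactly mutual indiscernibility: each row is indiscernible over $C$ together with the other rows. Step 2 subsumes Step 1 if we take all $\pi_t$ equal.

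\emph{Main obstacle.} The delicate point is justifying commutation $p\otimes q=q\otimes p$ for $p$ generically stable and $q$ merely $M$-invariant, in continuous logic and over an arbitrary $C\supseteq M$ rather than $\cU$. The approach is to work globally. Let $(b_j)$ realize the global Morley sequences, and use the generic stability limit $\lim_i\tp(a_i/\cU b)=p|_{\cU b}$ to compute $\tp(a,b/\cU)$ in both orders. Also check that a product of generically stable types is $M$-invariant, which suffices for the iteration (full generic stability of the product is not needed). Finally, restrict from $\cU$ to $C$ by taking $\cU$ large enough to contain $C$.
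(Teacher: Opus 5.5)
Your proposal is correct and follows essentially the same route as the paper. The paper notes that any two of the $p_t$ (including a $p_t$ with itself) commute by generic stability and leaves the rest as a straightforward exercise; your Step 2 carries out that exercise by writing the type over $C$ of the finite array as an iterated Morley product of copies of the $p_t$ and reordering the factors within each row using commutation and associativity. Only pairwise commutation among the generically stable $p_t$ is needed, so commutation with arbitrary $M$-invariant types is more than required, and Step 1 is subsumed by Step 2.
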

\begin{proof}
First, by generic stability, any two (not necessarily distinct) types chosen from $\{p_1,\ldots,p_\ell\}$ commute.  The claim now follows by a straightforward  exercise (cf., \cite[Proposition 4.7$(b)$]{CGH2}). 
\end{proof}

 Next we prove main technical result needed to establish that generically stable types are closed under Morley products. In particular, we show that generically stable types satisfy a strengthening of Definition \ref{def:gs} in which a Morley sequence in the type can be paired with an arbitrary convergent sequence of parameters. This result appears to be new in full generality. In Remark \ref{rem:NIP} below, we sketch a shorter proof in the NIP case using standard methods. 

\begin{lemma}\label{lem:MPtypes}
Suppose $p\in S_x(\cU)$ is generically stable over $M\prec\cU$ and $(b_i)_{i<\omega}$ is a sequence in $\cU^y$ such that $(\tp(b_i/\cU))_{i<\omega}$ converges to some $q\in S_y(\cU)$. Let $(a_i)_{i<\omega}$ be a Morley sequence in $p$ over $Mb_{<\omega}$. Then $\lim_{i\to\infty}\tp(a_ib_i/\cU)=p\otimes q$. 
\end{lemma}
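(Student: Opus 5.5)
The plan is to argue by contradiction, reduce to a configuration indiscernible over a single extra parameter, and then extract an order-type pattern that generic stability of $p$ forbids.

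\textbf{Reduction and target value.} Suppose the conclusion fails. Then there are an $\cL$-formula $\varphi(x,y,z)$ (without loss of generality $[0,1]$-valued), a parameter $c\in\cU^z$, and $\epsilon>0$ such that, along an infinite set $I\seq\omega$,
\[
|\varphi(a_i,b_i,c)-(p\otimes q)(\varphi(x,y,c))|\geq\epsilon .
\]
Since $p$ is generically stable, it is definable over $M$. So $(p\otimes q)(\varphi(x,y,c))$ is the value at $q$ of the continuous $M$-definable predicate $F^{\varphi}_p(y,c)$. By convergence of $\tp(b_i/\cU)$, this value equals $t:=\lim_i F^\varphi_p(b_i,c)=\lim_i \varphi(x,b_i,c)^p$. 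Passing to a subsequence (which keeps $(a_i)$ Morley in $p$ over $Mb_{<\omega}$) and replacing $\varphi$ by $1-\varphi$ if needed, I may assume
\[
\varphi(a_i,b_i,c)-F^\varphi_p(b_i,c)\geq\epsilon\quad\text{for all }i .
\]
So the goal becomes a uniform statement: a Morley sequence cannot sit on the ``wrong side'' of the $p$-average along the diagonal $b_i$ with a fixed external parameter $c$.

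\textbf{Extraction.} Next I apply Ramsey and compactness to $(a_ib_i)_{i<\omega}$ over $Mc$. This yields a sequence $(a'_ib'_i)_{i<\omega}$ indiscernible over $Mc$ which still satisfies:
\begin{itemize}
\item $a'_j\models p|_{Mb'_{<\omega}a'_{<j}}$, a finitary condition on finitely many indices, given by $M$-definable predicates since $p$ is $M$-definable;
\item the diagonal gap $\varphi(a'_i,b'_i,c)-F^\varphi_p(b'_i,c)\geq\epsilon$.
\end{itemize}
For $j<i$, the tail $(a'_i)_{i>j}$ is a Morley sequence in $p$ over $Mb'_j$ that is indiscernible over $Mb'_jc$. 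By generic stability (in the form of \cite[Fact 3.4]{CGH2}: for a Morley sequence indiscernible over a parameter, the eventual value equals the $p$-value), this gives
\[
\varphi(a'_i,b'_j,c)=F^\varphi_p(b'_j,c)\quad\text{for all } j<i .
\]
By indiscernibility this common value is a constant $t_0$, while the diagonal values are all $\geq t_0+\epsilon$. Indiscernibility also makes the values $\varphi(a'_i,b'_j,c)$ for $i<j$ a constant $s$.

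\textbf{Main obstacle.} The difficulty is to rule out this ``diagonal spike'': off-diagonal $t_0$ below, $s$ above, and $\geq t_0+\epsilon$ on the diagonal. Such patterns are not forbidden outright; this is exactly the phenomenon behind stable ict-patterns in \cite{CGH2}. The extra input must therefore be the convergence of $\tp(b_i/\cU)$.

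I expect the hard point is that this convergence is only pointwise in the parameter. So $\varphi(a_i,b_i,c)\approx\varphi(a_i,b_j,c)$ for $j\gg i$ does not follow directly, since $a_i$ moves with $i$. My first attempt would be to carry the convergence through the extraction. Enumerate $(a_i)$ and $c$ as elements of $\cU$, and require in the Ramsey step that for each fixed $i$ the values $\varphi(a_i,b_j,c)$ stabilize as $j\to\infty$ to $\varphi(a_i,b,c)$, where $b\models q|_{Ma_{<\omega}c}$. Then I would use generic stability of $p$ (so $p$ commutes with $\tp(b/M\ldots)$ along Morley sequences, Fact \ref{fact:commute}) to show $s=t_0$, which forces the values of $\tp(b'_i/Ma'_{\neq i}c)$ to be symmetric. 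Finally, I would realize a new Morley sequence in $p$ over $Mb'_{<\omega}c$ interleaved with $(a'_i)$. Comparing it with the diagonal should produce an alternation $t_0$ / $t_0+\epsilon$ for a single parameter, contradicting generic stability via the order-property characterization in \cite[Fact 3.4]{CGH2}.
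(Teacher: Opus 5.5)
Your reduction to a one-sided diagonal gap is fine, and so is the computation $\varphi(a'_i,b'_j,c)=F^\varphi_p(b'_j,c)=t_0$ for $j<i$. But the proof stops where the real work begins. The last paragraph is a plan rather than an argument, and as sketched it cannot succeed. A Ramsey/compactness extraction retains only an EM-type over a small set fixed in advance. The hypothesis that $\tp(b_i/\cU)$ converges is a condition at \emph{all} parameters of $\cU$, and it does not survive extraction, since indiscernible sequences need not have a limit type.

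The data you propose to keep is consistent with generic stability of $p$: indiscernibility over $Mc$, the Morley condition, the diagonal gap, $s=t_0$, symmetry, and even an extra element witnessing stabilization in $j$. For instance, take the Fra\"{\i}ss\'{e} limit of finite structures with sorts $P,Q,Z$ and a function $f\colon Q\times Z\to P$. Quantifier elimination shows that the non-realized type $p$ in $P$ is generically stable. Take a totally indiscernible sequence $(b_i)$ of distinct elements of $Q$, a Morley sequence $(a_i)$ in $p$ over $Mb_{<\omega}$, and $c\in Z$ with $f(b_i,c)=a_i$ for all $i$ and otherwise generic. For $\varphi(x,y,z)$ the formula $f(y,z)=x$, this is a diagonal spike, totally indiscernible over $Mc$, with $s=t_0=0$ and diagonal value $1$. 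Here $\tp(b_i/\cU)$ cannot converge. So no alternation contradicting generic stability can be derived from this data alone. Separately, Fact \ref{fact:commute} does not give $s=t_0$, since $q$ is not assumed invariant, let alone generically stable; that equality needs a double-limit argument via the order-property characterization.

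The missing idea is a way to merge infinitely many diagonal spikes into a single parameter while keeping convergence usable. The paper never extracts; it only passes to subsequences of the original sequence, which preserve both the Morley property over $Mb_{<\omega}$ and convergence.

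\begin{enumerate}[$(1)$]
\item Since $\{a_i\}$ is an indiscernible \emph{set} over $Mb_{<\omega}$ (Fact \ref{fact:commute} with $\ell=1$), the paper picks $\sigma\in\Aut(\cU/Mb_{<\omega})$ fixing each $a_{2k}$ and sending $a_{2k+1}$ to $a_{2k+3}$.
\item It sets $c_t=\sigma^t(c)$ and $\cbar=(c_0,\ldots,c_m)$. Then for odd $i$ the Morley sequence $(a_{i+2t})_{t\le m}$ has spikes $\theta(a_{i+2t},b_i,c_t)\geq\epsilon$ against the single tuple $(b_i,\cbar)$.
\end{enumerate}

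This is run as an induction on $n$ over the classes $\mathcal{P}_n$ of nonnegative $M$-definable $\theta$ with $\theta^n(p^{(n)})=0$:
\begin{enumerate}[$(a)$]
\item the auxiliary predicate $\chi(y,\cbar)\dotminus\psi(x,y,\cbar)$ lies in $\mathcal{P}_n$;
\item the induction hypothesis at even $i$, where $\psi(a_i,b_i,\cbar)=0$, gives $\chi(b_{2i},\cbar)\to 0$;
\item convergence of $\tp(b_i/\cU)$, applied to the single predicate $\chi(y,\cbar)$, transfers this to odd $i$;
\item at an odd $i$, a witness $\bar{e}$ for $\chi(b_i,\cbar)\approx 0$ combines with the spikes to produce $m+1$ exceptions to the uniform bound $m$ from generic stability.
\end{enumerate}

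Since $\cbar$ is built from $c$ and the $a_i$ after the fact, convergence has to be applied to the original sequence at a parameter that an extraction over $Mc$ cannot anticipate.
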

\begin{proof}
Given  a tuple $z$ of variables, and an $M$-definable predicate $\theta(x,y,z)$, define
\[
\theta^n(x_0,\ldots,x_{n-1})=\tsup{yz}\min\{\theta(x_0,y,z),\ldots,\theta(x_{n-1},y,z)\}.
\]
Given $n\geq 1$, let $\mathcal{P}_n$ be the set of nonnegatively-valued $M$-definable predicates $\theta(x,y,z)$ such that $\theta^n(p^{(n)})=0$. \medskip

\noindent\textit{Claim.} For all $n\geq 1$, if $\theta(x,y,z)$ is in $\mathcal{P}_n$ then, for any $c\in\cU^z$, 
\[
\lim_{i\to\infty}\theta(a_i,b_i,c)=0.
\]

Since the proof of the claim is somewhat technical, we will first explain how it yields the lemma. In particular, fix an $\cL$-formula $\varphi(x,y,z)$ and a tuple $c\in\cU^z$. We want to show
\[
\lim_{i\to\infty}\varphi(a_i,b_i,c)=\varphi(x,y,c)^{p\otimes q}.
\]

Fix an arbitrary $\epsilon>0$. 
Let $F=F^{\varphi(x;y,z)}_p$ and define
\[
\theta(x,y,z)=|\varphi(x,y,z)-F(y,z)|\dotminus\epsilon.
\]
Note that $\theta(x,y,z)$ is a nonnegatively-valued $M$-definable predicate. Since $p$ is generically stable over $M$, it follows from \cite[Fact 3.4$(b)$]{CGH2} that there is some integer $n\geq 1$ such that $\theta^n(p^{(n)})=0$. So $\theta(x,y,z)$ is in $\mathcal{P}_n$. Thus, by the claim, 
\[
\lim_{i\to\infty}|\varphi(a_i,b_i,c)-F(b_i,c)|\dotminus\epsilon=0.
\]

Since  $\epsilon$  was arbitrary, we now have  $\lim_{i\to\infty}|\varphi(a_i,b_i,c)-F(b_i,c)|=0$.
Therefore
\[
\lim_{i\to\infty}\varphi(a_i,b_i,c)=\lim_{i\to\infty}F(b_i,c)=F(y,c)^q=\varphi(x,y,c)^{p\otimes q}.
\]

So it suffices to establish the claim.\medskip

\noindent\textit{Proof of the Claim.} We proceed by induction on $n$. For the base case, note that if $\theta(x,y,z)$ is in $\mathcal{P}_1$, then $\theta(a,b,c)=0$ for any $a\models p|_M$, $b\in\cU^y$, and $c\in\cU^z$, and hence the conclusion of the claim holds trivially. 

Assume the claim holds for some fixed $n\geq 1$ and fix some $\theta(x,y,z)$ in $\mathcal{P}_{n+1}$. Let $\xbar=(x_0,\ldots,x_{n-1})$, and let $G(\xbar)$ denote the $M$-definable predicate $F^{\theta^{(n+1)}(x;\xbar)}_p$. 
Note that if $\bar{e}\models p^{(n)}|_M$, then $G(\bar{e})=\theta^{(n+1)}(p^{(n+1)})=0$, where the final equality holds since $\theta(x,y,z)$ is in $\mathcal{P}_{n+1}$.

Now, toward a contradiction, suppose $\lim_{i\to\infty}\theta(a_i,b_i,c)\neq 0$ for some $c\in\cU^z$. After passing to a subsequence, we may fix  $\epsilon>0$ such that $\theta(a_i,b_i,c)\geq\epsilon$ for all $i<\omega$. Since $p$ is generically stable over $M$, we can apply \cite[Fact 3.4$(b)$]{CGH2} to obtain an integer $m\geq 0$ such that for any $\bar{e}\in\cU^{\xbar}$ and any Morley sequence $(a'_i)_{i<\omega}$ in $p$ over $M$, we have
\begin{equation}\label{eq:MP2}
|\{i<\omega:|\theta^{n+1}(a'_i;\bar{e})-G(\bar{e})|\geq {\textstyle\frac{\e}{2}}\}|\leq m.
\end{equation}

Set $\zbar=(z_0,\ldots,z_m)$ where $m$ is from (\ref{eq:MP2}). Consider the following $M$-definable predicates:
\begin{align*}
\psi(x,y,\zbar) &= \epsilon\dotminus \tmin{0\leq t\leq m}  \theta(x,y,z_t),\\
\chi(y,\zbar) &=\tinf{\xbar}(G(\xbar)+{\textstyle \max_{0\leq i<n}}\psi(x_i,y,\zbar)),\\
\theta_0(x,y,\zbar) &= \chi(y,\zbar)\dotminus\psi(x,y,\zbar).
\end{align*}

 We claim that $\theta_0(x,y,\zbar)$ is in $\mathcal{P}_n$. To verify this, first note that $\theta_0(x,y,\zbar)$ is nonnegatively-valued by definition. So we need to show $\theta^n_0(p^{(n)})=0$. Fix $\bar{e}\models p^{(n)}|_M$, $b\in\cU^y$, and $\cbar\in\cU^{\zbar}$. We need to find  $0\leq i^*<n$ such that $\theta_0(e_{i^*},b,\cbar)=0$. Choose $0\leq i^*<n$ such that $\psi(e_{i^*},b,\cbar)=\max_{0\leq i<n}\psi(e_i,b,\cbar)$. Recall that $G(\bar{e})=0$. Hence $\bar{e}$ witnesses that $\chi(b,\cbar)\leq \psi(e_{i^*},b,\cbar)$, and so $\theta_0(e_{i^*},b,\cbar)=0$. 
 
 Since $\theta_0(x,y,\zbar)$ is in $\mathcal{P}_n$, we can apply the induction hypothesis to conclude that for any $\cbar\in\cU^{\zbar}$,
 \begin{equation}
 \lim_{i\to\infty}\chi(b_i,\cbar)\dotminus \psi(a_i,b_i,\cbar)=0.\label{eq:MP3}
 \end{equation}

Recall that $(a_i)_{i<\omega}$ is a Morley sequence in $p$ over $M b_{<\omega}$. By Fact \ref{fact:commute} (with $\ell=1$ and $p_1=p$), $\{a_i:i<\omega\}$ is an indiscernible set over $Mb_{<\omega}$. So we may fix an automorphism $\sigma\in\Aut(\cU/Mb_{<\omega})$ such that $\sigma(a_i)=a_i$ if $i$ is even and $\sigma(a_i)=a_{i+2}$ if $i$ is odd. For $t<\omega$, set $c_t=\sigma^t(c)$. Then  for any $i,t<\omega$, we have
\begin{equation}\label{eq:MP0}
\epsilon\leq \theta(a_i,b_i,c)=\begin{cases}
\theta(a_i,b_i,c_t) & \text{if $i$ is even,}\\
\theta(a_{i+2t},b_i,c_t) & \text{if $i$ is odd.}
\end{cases}
\end{equation}

Set $\cbar=(c_0,\ldots,c_m)\in\cU^{\zbar}$. If $i<\omega$ is even then $\psi(a_i,b_i,\cbar)=0$ by (\ref{eq:MP0}), and thus  $\lim_{i\to\infty}\chi(b_{2i},\cbar)=0$ by (\ref{eq:MP3}). On the other hand, $\lim_{i\to\infty} \chi(b_i,\cbar)$ exists since $(\tp(b_i/\cU))_{i<\omega}$ converges (to $q$). Therefore $\lim_{i\to\infty}\chi(b_i,\cbar)=0$, which implies that we may fix some odd $i<\omega$ and some $\bar{e}\in\cU^{\xbar}$ such that 
\begin{equation}\label{eq:MP4}
G(\bar{e})+\max_{0\leq j<n}\psi(e_j,b_i,\cbar)<{\textstyle\frac{\epsilon}{2}}.
\end{equation} 
Since $(a_{i+2t})_{t<\omega}$ is a Morley sequence in $p$ over $M$, we can also apply (\ref{eq:MP2}) to find some  $0\leq t\leq m$ such that $\theta^{n+1}(a_{i+2t};\bar{e})<G(\bar{e})+{\textstyle\frac{\epsilon}{2}}$. 
But then we have
\begin{align*}
\epsilon-\tmax{0\leq j<n}\psi(e_j,b_i,\cbar) &\leq \min\{\epsilon,\tmin{0\leq j<n}\theta(e_j,b_i,c_t)\}\\
 &\leq \min\{\theta(a_{i+2t},b_i,c_t),\theta(e_0,b_i,c_t),\ldots\theta(e_{n-1},b_i,c_t)\}\\
  &\leq  \theta^{(n+1)}(a_{i+2t};\bar{e})\\
   &<G(\bar{e})+{\textstyle\frac{\epsilon}{2}}\\
   &< \epsilon-\tmax{0\leq j<n}\psi(e_j,b_i,\cbar),
\end{align*}
where the first inequality holds by definition of $\psi$, the second inequality holds by (\ref{eq:MP0}), the third inequality holds by definition of $\theta^{n+1}$,  the fourth inequality holds by choice of $t$, and the fifth inequality holds by (\ref{eq:MP4}).  This is a contradiction. 
\end{proof}

\begin{remark}\label{rem:NIP}
If $T$ is NIP, then Lemma \ref{lem:MPtypes} has a much shorter proof. For simplicity, we also assume $T$ is discrete. Let $p$, $(b_i)_{i<\omega}$, and $q$ be as in the statement. Fix an $\cL_{\cU}$-formula $\varphi(x,y,c)$. Define an equivalence relation $\sim$ on $\omega$ so that $i\sim i'$ if and only if, for all $j<\omega$, $\varphi(a_i,b_j,c)$ and $ \varphi(a_{i'},b_j,c)$ have the same truth value. We claim that $\sim$ has finitely many classes.\footnote{This also follows from the $n=1$ case of \cite[Theorem 3.33]{Sibook}, which is a more general result proved using honest definitions.} Indeed, suppose $(i_t)_{t<\omega}$ is an increasing sequence of pairwise $\sim$-inequivalent elements of $\omega$. For each $t<\omega$, choose $j_t<\omega$ such that $\varphi(a_{i_{2t}},b_{j_t},c)$ and $\varphi(a_{i_{2t+1}},b_{j_t},c)$ have different truth values. For $\sigma\seq\omega$ and $t<\omega$, choose $a^\sigma_t\in\{a_{i_{2t}},a_{i_{2t+1}}\}$ such that $\varphi(a^\sigma_t,b_{j_t},c)$ has the same truth value as ``$t\in\sigma$". Since $(a_i)_{i<\omega}$ is indiscernible over $b_{<\omega}$, we can find, for each $\sigma\seq\omega$, some $c_\sigma$ such that $a_{<\omega}b_{<\omega}c_\sigma\equiv a^\sigma_{<\omega}b_{<\omega}c$. So now $\varphi(a_t,b_{j_t},c_\sigma)$ holds if and only if $t\in\sigma$, contradicting NIP. 

Now fix some infinite  $\sim$-class $I\seq\omega$. For each $j<\omega$, the truth value of $\varphi(a_i,b_j,c)$ is constant on $I$. Since $p$ is generically stable, it follows that $\varphi(a_i,b_j,c)$ coincides with $F(b_j,c)$ for all $i\in I$ and $j<\omega$, where $F=F^{\varphi(x;y,z)}_p$.  Therefore $I$ is the only infinite $\sim$-class, and hence $\lim_{i\to\infty}\varphi(a_i,b_i,c)=\lim_{i\to\infty}F(b_i,c)=F(y,c)^q=\varphi(x,y,c)^{p\otimes q}$. 
\end{remark}

\begin{theorem}\label{thm:MPtypes}
Suppose $p\in S_x(\cU)$ and $q\in S_y(\cU)$ are generically stable over $M\prec\cU$. Then $p\otimes q$ is generically stable over $M$.
\end{theorem}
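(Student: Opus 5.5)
We will verify Definition \ref{def:gs} directly for $p\otimes q$. First, $p\otimes q$ is $M$-invariant, since both $p$ and $q$ are (indeed definable over $M$). So we fix a Morley sequence $(a_ib_i)_{i<\omega}$ in $p\otimes q$ over $M$ and aim to show that $\lim_{i\to\infty}\tp(a_ib_i/\cU)=p\otimes q$. Unwinding the Morley product, for each $i$ we have $b_i\models q|_{Ma_{<i}b_{<i}}$ and $a_i\models p|_{Mb_{\leq i}a_{<i}b_{<i}}$. Our goal is to reduce this to Lemma \ref{lem:MPtypes}.

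The first step is to get a convergent sequence of parameters. Since $(b_i)_{i<\omega}$ is a Morley sequence in $q$ over $M$ and $q$ is generically stable, we have $\lim_{i\to\infty}\tp(b_i/\cU)=q$. The second step is to arrange that the $a$'s form a Morley sequence in $p$ over $Mb_{<\omega}$, as Lemma \ref{lem:MPtypes} requires. This is where we use commutation. By Fact \ref{fact:commute} (with $\ell=2$, $p_1=p$, $p_2=q$), the sets $\{a_i\}$ and $\{b_i\}$ are mutually indiscernible over $M$. More to the point, generically stable types commute with each other, so $p\otimes q=q\otimes p$ and $p_x\otimes q^{(\omega)}=q^{(\omega)}\otimes p_x$.

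We will use this to show that $\tp((a_i)_{i}(b_i)_{i}/M)$ equals the type of $(a'_i)_i(b'_i)_i$, where $(b'_i)_i\models q^{(\omega)}|_M$ and $(a'_i)_i\models p^{(\omega)}|_{Mb'_{<\omega}}$. Concretely, we argue by induction on finite initial segments: we move each $a_i$ past the later $b_j$ ($j>i$) by repeatedly swapping adjacent factors $p_{x_i}\otimes q_{y_j}=q_{y_j}\otimes p_{x_i}$ inside the iterated product, using associativity of Morley products of definable types. This yields the equality of the two types over $M$. Applying an automorphism over $M$, we may therefore assume that $(a_i)_{i<\omega}$ is a Morley sequence in $p$ over $Mb_{<\omega}$.

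The third step is to apply Lemma \ref{lem:MPtypes}, which now gives $\lim_{i\to\infty}\tp(a_ib_i/\cU)=p\otimes q$. The convergence statement only involves $\tp(a_ib_i/\cU)$ for each $i$, and these types were not changed by the automorphism over $M$ in a way that matters. The point is that the original sequence and the transported sequence realize the same type over $M$, and both sides of the convergence claim are transported consistently. More carefully, we apply Lemma \ref{lem:MPtypes} to the image sequence, and we note that the condition "every Morley sequence over $M$ converges to the type" can be checked on any sequence with the same type over $M$. Alternatively, we invoke the equivalent formulation of generic stability from \cite[Fact 3.4]{CGH2}, which depends only on the type over $M$.

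I expect the main obstacle to be the second step: justifying rigorously that a Morley sequence in $p\otimes q$ over $M$ has the same type over $M$ as "first $q^{(\omega)}$, then $p^{(\omega)}$ over it". This needs commutation of $p$ with $q$ (from generic stability, as in the proof of Fact \ref{fact:commute}) together with associativity. I would verify it on finite initial segments by the adjacent-swap argument described above.
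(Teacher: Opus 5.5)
Your proposal is correct and follows essentially the same route as the paper. You get convergence of $(b_i)_{i<\omega}$ to $q$ from generic stability of $q$, use commutation (the content of Fact \ref{fact:commute}) to see that $(a_i)_{i<\omega}$ is a Morley sequence in $p$ over $Mb_{<\omega}$, and then apply Lemma \ref{lem:MPtypes}. The automorphism detour in your second and third steps is unnecessary: since $p^{(\omega)}$ is $M$-invariant, the equality of types over $M$ that you establish already shows that the original sequence $(a_i)_{i<\omega}$ is Morley in $p$ over $Mb_{<\omega}$.
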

\begin{proof}
Let $(a_i,b_i)_{i<\omega}$ be a Morley sequence in $p\otimes q$ over $M$. Then $(b_i)_{i<\omega}$ is a Morley sequence in $q$ over $M$, and hence $\lim_{i\to\infty}\tp(b_i/\cU)=q$ since $q$ is generically stable over $M$. Moreover,  by Fact \ref{fact:commute} (with $\ell=2$, $p_1=p$, and $p_2=q$), $(a_i)_{i<\omega}$ is a Morley sequence in $p$ over $Mb_{<\omega}$. Therefore $\lim_{i\to\infty}\tp(a_ib_i/\cU)=p\otimes q$ by Lemma \ref{lem:MPtypes}. By definition, we have shown that $p\otimes q$ is generically stable over $M$.
\end{proof}

Finally, we upgrade the  result to measures using  the  connection between generically stable measures in $T$ and generically stable types in $T^R$.

\begin{proof}[\textnormal{\textbf{Proof of Theorem \ref{thm:MP}}}]
Suppose $\mu\in \kM_x(\cU)$ and $\nu\in \kM_y(\cU)$ are generically stable over $M\prec\cU$. 
By assumption and Definition \ref{def:gsm}, the types $r_\mu\in S^R_x(\Cc)$ and $r_\nu\in S^R_y(\Cc)$ are generically stable over $M^\Omega$. Therefore $r_\mu\otimes r_\nu$ is generically stable over $M^\Omega$ by Theorem \ref{thm:MPtypes} (applied in $T^R$). Thus $r_{\mu\otimes\nu}$ is generically stable over $M^\Omega$ by Proposition \ref{prop:prod-same-1}. So $\mu\otimes\nu$ is generically stable over $M$ by Definition \ref{def:gsm}.  
\end{proof}

\section{Related properties and questions}\label{sec:more}

\subsection{Order property patterns for generically stable types}
Recall  that a global type $p\in S_x(\cU)$ is generically stable over $M\prec\cU$ if and only if there is no instance of the order property for a formula $\varphi(x,y)$ involving sequences $(a_i)_{i<\omega}$ from $\cU^x$ and $(b_i)_{i<\omega}$ from $\cU^y$, where $(a_i)_{i<\omega}$ is a Morley sequence in $p$ over $M$. In \cite{CGH2}, we pursued an analogous characterization of generic stability for Keisler measures (with $T$ discrete), but without a conclusive result (see \cite[Proposition 2.6]{CGH2} and subsequent remarks). Here we establish such a characterization, which differs from  \cite{CGH2} (see Corollary \ref{cor:CGHOP} and Question \ref{ques:OP} below), but nevertheless accomplishes the desired analogy.

\begin{theorem}\label{thm:COP}
Suppose $\mu\in\kM_x(\cU)$ is Borel-definable over $M\prec\cU$. Then the following are equivalent.
\begin{enumerate}[$(i)$]
\item $\mu$ is generically stable over $M$.
\item There does not exist an $\cL$-formula $\varphi(x,y)$, a measure $\eta\in\KM_{\boldsymbol{y}}(\mu/M)$, and some $r<s$ in $\R$ such that for all $i,j<\omega$, if $i\leq j$ then $\eta(\varphi(x_i,y_j))\leq r$ and if $i>j$ then $\eta(\varphi(x_i,y_j))\geq s$.
\end{enumerate}
\end{theorem}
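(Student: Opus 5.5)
We would prove $(i)\Rightarrow(ii)$ by passing to the randomization and using the classical fact that a generically stable type admits no order property along its Morley sequences. For $(ii)\Rightarrow(i)$ we would argue by contraposition, reading off an $\eta$ from a failure of generic stability of $r_\mu$, with a separate argument when $\mu$ is not definable.

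\emph{$(i)\Rightarrow(ii)$.} Assume $\mu$ is generically stable over $M$, so $\mu$ is definable and $r_\mu$ is generically stable over $M^\Omega$. Suppose $\varphi$, $\eta\in\KM_{\boldsymbol{y}}(\mu/M)$ and $r<s$ witness a pattern as in $(ii)$. Let $p=r_\mu^{(\omega)}|_{M^\Omega}$. By Fact \ref{fact:rtrans}, $p$ extends $\mu^{(\omega)}|_M=\eta|_{\bfx}$. Lemma \ref{lem:measure-extension} (with the infinite tuple $\boldsymbol{y}$ in place of $y$) then gives $(a_i)_{i<\omega}(b_j)_{j<\omega}$ in $\Cc$ such that:
\begin{itemize}
\item $(a_i)_{i<\omega}$ is a Morley sequence in $r_\mu$ over $M^\Omega$;
\item $\tp((a_i)_i(b_j)_j/M^\Omega)$ extends $\eta$.
\end{itemize}
Hence $\Eb[\varphi(a_i,b_j)]=\eta(\varphi(x_i,y_j))$, which is $\le r$ for $i\le j$ and $\ge s$ for $i>j$. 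This is an instance of the order property for the $\cL^R$-formula $\Eb[\varphi(x,y)]$ along a Morley sequence in the generically stable type $r_\mu$. That contradicts the order-property characterization of generic stability (in its continuous form, cf.\ \cite[Fact 3.4]{CGH2}) applied in $T^R$. Concretely, for fixed $j$ the limit $\lim_i\Eb[\varphi(a_i,b_j)]$ equals $(\Eb[\varphi(x,b_j)])^{r_\mu}$ and is $\ge s$. The Morley sequence $(a_i)_{i<\omega}$ is totally indiscernible over $M^\Omega$ by Fact \ref{fact:commute}. A standard compactness and reversal argument then also forces the value to be $\le r$.

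\emph{$(ii)\Rightarrow(i)$, definable case.} Suppose $\mu$ is not generically stable. First assume $\mu$ is definable over $M$, so that $r_\mu$ exists and is not generically stable over $M^\Omega$. By the order-property characterization in $T^R$ together with Fact \ref{fact:randomizationsQE}, there are:
\begin{itemize}
\item a Morley sequence $(a_i)_{i<\omega}$ in $r_\mu$ over $M^\Omega$;
\item a sequence $(b_j)_{j<\omega}$ in $\Cc$;
\item an $\cL$-formula $\varphi(x,y)$ and reals $r<s$;
\end{itemize}
such that $\Eb[\varphi(a_i,b_j)]\le r$ for $i\le j$ and $\ge s$ for $i>j$. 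We may need to absorb $M^\Omega$-parameters into $b_j$ and reorder indices using indiscernibility. Now let $\eta=\nu_{\tp((a_i)_i(b_j)_j/M^c)}$, using Fact \ref{fact:nup}. As in the proof of Proposition \ref{prop:SSA}, Fact \ref{fact:rtrans} gives $\eta|_{\bfx}=\mu^{(\omega)}|_M$. So $\eta\in\KM_{\boldsymbol{y}}(\mu/M)$ and it witnesses the failure of $(ii)$.

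\emph{$(ii)\Rightarrow(i)$, non-definable case.} If $\mu$ is not definable, then by Lemma \ref{lem:SAfam} it is not self-averaging. So there are $\lambda\in\KM(\mu/M)$, $b$ and $\varphi$ with, after passing to a subsequence, $\lambda(\varphi(x_i,b))\ge\mu(\varphi(x,b))+\e$ for all $i$. We would then rerun the argument of Lemma \ref{lem:SAfam} from \cite{CGH2}, which derives definability (fam) from self-averaging, and extract from it an explicit order pattern. The idea is to take $y_j$ to be copies of $b$ moved by automorphisms fixing the first $j$ coordinates of a Morley-type sequence. Equivalently, one can pass through Theorem \ref{thm:main} once definability is secured. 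This non-definable case, and making the order pattern in $T^R$ directly readable as a measure $\eta$, is where we expect the main obstacle to lie.
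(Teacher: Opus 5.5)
\paragraph{Assessment.} Your $(i)\Rightarrow(ii)$ is correct, but it takes a different route from the paper. You lift $\eta$ to $T^R$ alongside a Morley sequence in $r_\mu$ and use the uniform counting bound for the generically stable type $r_\mu$. The paper instead stays in $T$: it takes a subnet limit $\sigma$ of $(\eta|_{\bfx y_j})_{j<\omega}$ in the compact set $\KM_y(\mu/M)$ and uses Proposition \ref{prop:SSA}$(ii)$ to reach $s\leq\int F^\varphi_\mu\,d\sigma|_y\leq r$. Your definable case of $(ii)\Rightarrow(i)$ also works. Quantifier elimination gives an atomic $\Eb[\varphi(x,y)]$ witnessing non-convergence, and the usual $\omega+\omega$ automorphism trick for the $M^\Omega$-invariant type $r_\mu$ turns it into an order pattern. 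Then $\nu_{\tp((a_i)_i(b_j)_j/M^c)}$ lies in $\KM_{\bfy}(\mu/M)$.

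\paragraph{The gap.} The case where $\mu$ is Borel-definable but not definable is covered by the statement, and you leave it unresolved. Neither suggested remedy works. Passing through Theorem \ref{thm:main} ``once definability is secured'' does not apply, since here $\mu$ is not definable and $r_\mu$ does not exist. The automorphism idea also fails: $\lambda$ is only a measure on $\bfx$, so there is no realized sequence in $\cU$ whose first $j$ coordinates an automorphism could fix.

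\paragraph{The missing idea.} Perform the $\omega+\omega$ shift at the level of measures, via Morley products. This needs only Borel-definability and makes your case split unnecessary.
\begin{enumerate}[$(1)$]
\item Failure of $(i)$ gives failure of self-averaging, by Theorem \ref{thm:main}. So take $\lambda\in\KM(\mu/M)$, $b$, and $\varphi$ with $\lambda(\varphi(x_i,b))\geq\mu(\varphi(x,b))+\e$ for all $i$.
\item Let $\sigma\in\KM_y(\mu/M)$ be given by $\sigma(\theta(\bfx,y))=\lambda(\theta(\bfx,b))$.
\item Set $\sigma_0=\mu\otimes\sigma$ and $\sigma_{j+1}=\mu\otimes\sigma_j$, each time placing the new $\mu$-coordinate at $x_0$ and shifting the others up.
\item All the $\sigma_j$ have $\bfx$-marginal $\mu^{(\omega)}|_M$. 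Realize each $\sigma_j$ over $M^c$ as $\boldsymbol{a}_jc_j$ (Fact \ref{fact:nup}); then $\boldsymbol{a}_j\equiv_{M^c}\boldsymbol{a}_0$.
\item Choose $c'_j$ with $\boldsymbol{a}_0c'_j\equiv_{M^c}\boldsymbol{a}_jc_j$. This glues the $\sigma_j$ into a single $\eta\in\KM_{\bfy}(\mu/M)$ with $\eta|_{\bfx y_j}=\sigma_j$.
\item Since $\sigma|_y=\tp(b/M)$, for $i\leq j$ we get $\eta(\varphi(x_i,y_j))=\mu(\varphi(x,b))$. For $i>j$ we get $\eta(\varphi(x_i,y_j))=\lambda(\varphi(x_{i-j-1},b))\geq\mu(\varphi(x,b))+\e$.
\end{enumerate}
This is exactly the pattern forbidden by $(ii)$.
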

\begin{proof}
$(i)\Rightarrow (ii)$. Assume $(i)$ and, toward a contradiction,  suppose $(ii)$ fails witnessed by $\varphi(x,y)$, $\eta$, and $r<s$. By $(i)$ and Proposition \ref{prop:SSA}, we have that for any $\sigma\in\KM_y(\mu/M)$,
\begin{equation*}
\lim_{i\to\infty}\sigma(\varphi(x_i,y))=(\mu\otimes\sigma|_y)(\varphi(x,y)).\tag{$\ast$}
\end{equation*}

Since $\KM_y(\mu/M)$ is compact, the sequence $(\eta|_{\bfx y_j})_{j<\omega}$ in $\KM_y(\mu/M)$ has a subnet converging to some $\sigma\in\KM_y(\mu/M)$. For a fixed $i<\omega$, since $\eta(\varphi(x_i,y_j))\leq r$ for all $j\geq i$, it follows that $\sigma(\varphi(x_i,y))\leq r$. Therefore by $(\ast)$,
\[
(\mu\otimes\sigma|_y)(\varphi(x,y))=\lim_{i\to\infty}\sigma(\varphi(x_i,y))\leq r.
\]
On the other hand, for any fixed $j<\omega$, using $(\ast)$, we have
\[
\int F^\varphi_\mu\,d\eta|_{y_j}=(\mu\otimes\eta|_{y_j})(\varphi(x,y_j))=\lim_{i\to \infty}\eta(\varphi(x_i,y_j))\geq s.
\]
 Therefore, since $F^\varphi_\mu$ is continuous, 
 \[
s\leq \int F^\varphi_\mu\, d\sigma|_y=(\mu\otimes\sigma|_y)(\varphi(x,y))\leq r,
\]
which is a contradiction.\medskip

$(ii)\Rightarrow (i)$. Suppose $(i)$ fails. Then $\mu$ is not self-averaging over $M$. So there is a measure $\lambda\in\kM_{\bfx}(\cU)$, with $\lambda|_M=\mu^{(\omega)}|_M$, and an $\cL_{\cU}$-formula $\varphi(x,b)$, with $b\in\cU^y$, such that $\lim_{i\to\infty}\lambda(\varphi(x_i,b))\neq\mu(\varphi(x,b))$. After passing to a subsequence and replacing $\varphi(x,y)$ with $-\varphi(x,y)$ if necessary, we may assume that for some $\epsilon>0$, we have $\lambda(\varphi(x_i,b))\geq\mu(\varphi(x,b))+\epsilon$ for all $i<\omega$. 

Now define $\sigma\in\KM_y(\mu/M)$ such that for any $\cL_M$-formula $\theta(\bfx,y)$, $\sigma(\theta(\bfx,y))=\lambda(\theta(\bfx,b))$. 
We inductively define a sequence $(\sigma_i)_{i<\omega}$ in $\KM_y(\mu/M)$ so that $\sigma_0=\mu\otimes\sigma$ and $\sigma_{i+1}=\mu\otimes\sigma_i$.\footnote{Here we use the same abuse of notation mentioned before Proposition \ref{prop:SSA}.} 
\medskip

\noindent\textit{Claim.} There is some $\eta\in\KM_{\bfy}(\mu/M)$ such that for all $j<\omega$, $\eta|_{\bfx y_j}=\sigma_j$. 

\noindent\textit{Proof.} 
For $j<\omega$, let $\boldsymbol{a}_j c_j$ realize an extension of $\sigma_j$ in $S^R_{\bfx y_j}(M^c)$ (via Fact \ref{fact:nup}). Then for any $j<\omega$, we have $\boldsymbol{a}_j\equiv_{M^c} \boldsymbol{a}_0$, so we can choose some $c'_j$ such that $\boldsymbol{a}_jc_j\equiv_{M^c} \boldsymbol{a}_0c'_j$. Let $p=\tp(\boldsymbol{a}_0\boldsymbol{c}'/M^c)$ and let $\eta=\nu_p$. \clqed  
\medskip

Let $\eta\in\KM_{\bfy}(\mu/M)$ be as in the claim. Then for all $i,j<\omega$, if $i\leq j$ then $\eta(\varphi(x_i,y_j))=\sigma_j(\varphi(x_i,y))=\mu(\varphi(x,b))$, while if $i>j$ then 
\[
\eta(\varphi(x_i,y_j))=\sigma_j(\varphi(x_i,y))=\sigma(\varphi(x_{i-j-1},y))=\lambda(\varphi(x_{i-j-1},b))\geq\mu(\varphi(x,b))+\epsilon.
\]
Altogether, we have constructed a failure of $(ii)$.
\end{proof}

As an immediate corollary, we obtain the following extension of \cite[Proposition 2.6]{CGH2} to continuous $T$.\footnote{In \cite{CGH2}, this result is stated in terms of self-averaging.}

\begin{corollary}\label{cor:CGHOP}
Suppose $\mu\in\kM_x(\cU)$ is generically stable over $M\prec\cU$. Then there does not exist an $\cL$-formula $\varphi(x,y)$, a measure  $\lambda\in\KM(\mu/M)$, a sequence $(b_i)_{i<\omega}$ from $\cU^y$, and some $r<s$ in $\R$ such that for all $i,j<\omega$, if $i\leq j$ then $\lambda(\varphi(x_i,b_j))\leq r$ and if $i>j$ then $\lambda(\varphi(x_i,b_j))\geq s$.
\end{corollary}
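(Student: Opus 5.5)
The plan is to deduce Corollary \ref{cor:CGHOP} from the $(i)\Rightarrow(ii)$ direction of Theorem \ref{thm:COP}. Suppose toward a contradiction that $\varphi(x,y)$, $\lambda\in\KM(\mu/M)$, $(b_j)_{j<\omega}$ in $\cU^y$, and $r<s$ witness the forbidden pattern. We will build from this data a measure $\eta\in\KM_{\bfy}(\mu/M)$ with $\eta(\varphi(x_i,y_j))=\lambda(\varphi(x_i,b_j))$ for all $i,j<\omega$. Then $\varphi$, $\eta$, $r$, $s$ contradict Theorem \ref{thm:COP}$(ii)$, which holds because $\mu$ is generically stable over $M$.

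The construction of $\eta$ mirrors the definition of $\sigma$ in the proof of Corollary \ref{cor:rgstoSA}. Regard $\boldsymbol{b}=(b_j)_{j<\omega}$ as a single (infinite) parameter tuple in $\cU^{\bfy}$. Define $\eta\in\kM_{\bfx\bfy}(M)$ by
\[
\eta(\theta(\bfx,\bfy))=\lambda(\theta(\bfx,\boldsymbol{b}))
\]
for every $\cL_M$-formula (or $M$-definable predicate) $\theta(\bfx,\bfy)$. Each such $\theta$ involves only finitely many variables, so $\theta(\bfx,\boldsymbol{b})$ is an $\cL_\cU$-formula and $\lambda$ evaluates it. The map $\theta\mapsto\lambda(\theta(\bfx,\boldsymbol{b}))$ is a positive normalized linear functional on continuous functions on $S_{\bfx\bfy}(M)$: it is the pushforward of $\lambda$ under the restriction map $S_{\bfx}(\cU)\to S_{\bfx\bfy}(M)$, $\tp(\boldsymbol{a}/\cU)\mapsto\tp(\boldsymbol{a}\boldsymbol{b}/M)$. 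So $\eta$ is a well-defined Keisler measure.

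Its restriction to $\bfx$ is $\lambda|_M=\mu^{(\omega)}|_M$, since $\lambda\in\KM(\mu/M)$. Hence $\eta\in\KM_{\bfy}(\mu/M)$. By construction, $\eta(\varphi(x_i,y_j))=\lambda(\varphi(x_i,b_j))$, which is $\leq r$ when $i\leq j$ and $\geq s$ when $i>j$.

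There is no substantive obstacle. The only point needing care is that $\KM_{\bfy}(\mu/M)$ allows an infinite parameter tuple $\bfy$, which the definition and Theorem \ref{thm:COP} permit. Well-definedness of $\eta$ is immediate from the pushforward description.
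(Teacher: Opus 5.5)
Your proof is correct and is essentially the paper's argument. The paper presents this as an immediate consequence of Theorem \ref{thm:COP}$[(i)\Rightarrow(ii)]$, and the natural way to pass from $\lambda$ and $(b_j)_{j<\omega}$ to $\eta\in\KM_{\bfy}(\mu/M)$ is the substitution $\eta(\theta(\bfx,\bfy))=\lambda(\theta(\bfx,\boldsymbol{b}))$ you use, the same device as in the proof of Corollary \ref{cor:rgstoSA}.
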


\begin{question}\label{ques:OP}
Does the converse of Corollary \ref{cor:CGHOP} hold?
\end{question}

\subsection{Random generic stability}\label{sec:rgs}

In \cite{Khanaki}, Khanaki introduces the following (a priori) weaker notion of generic stability for measures.

\begin{definition}
A measure $\mu\in\kM_x(\cU)$ is \textbf{randomly generically stable (\emph{rgs}) over $M\prec\cU$} if it has an extension $p\in S^R_x(\Cc)$ which is generically stable over $M^\Omega$.
\end{definition}

Strictly speaking, this definition is formulated in \cite{Khanaki} under the assumption that $\mu$ is definable over $M\prec\cU$. However, Fact 2.11$(i)$ and Theorem 3.2[$(i)\Rightarrow(ii)$] of \cite{Khanaki} show that if $\mu$ is \emph{rgs} over $M$ then it is definable over $M$ (when $T$ is discrete). A more direct proof valid for continuous $T$ can be obtained from the purely topological criterion given by \cite[Lemma 2.3]{CGH2} (similar to the proof of definability for self-averaging measures).

In any case, Khanaki \cite{Khanaki} shows that any \emph{rgs} measure is finitely approximable, and that \emph{rgs} coincides with generic stability at the level of types (see Proposition 3.5, Theorem 3.2, and Fact 2.11 in \cite{Khanaki}). He also characterizes \emph{rgs} measures as those admitting a ``stable basic sequence" \cite[Theorem 3.2]{Khanaki}, which  roughly means that $\mu$ can be  approximated by certain finitely supported measures in a rather strong way allowing uniform control over ``random parameters". Altogether, these results establish  \emph{rgs} as a robust  generic stability-like property. This leaves open the following natural question, which is due to Khanaki \cite[Question 3.9]{Khanaki}.

\begin{question}\label{ques:rgs}
Suppose $\mu\in\kM_x(\cU)$ is \emph{rgs} over $M\prec\cU$. Is $\mu$ generically stable over $M$?\footnote{In \cite{Khanaki}, generic stability is phrased in terms of ``\emph{irgs}", i.e., condition $(ii)$ of Theorem \ref{thm:main}.}
\end{question}

\subsection{Final note}\label{sec:probs} Given the significant role played by AI models in the above results, the reader may wonder about  the situation  with Questions \ref{ques:OP} and \ref{ques:rgs}. In the interest of full transparency, we feel it is appropriate to disclose our findings in this direction. ChatGPT 5.6 did not successfully produce answers to either question. However, a later query with ChatGPT 6 resulted in proposed proofs of positive answers to both. As of writing, the authors have not  fully checked the details of these proofs, which are both fairly involved. Nor have we  done the due diligence of finding the proper attribution for the underlying arguments. Moreover, it appears that a presentation these arguments would require a major overhaul of the present paper in order to be done efficiently and with the proper clarity. For this reason, we have decided to not pursue these lines of investigation here. Both questions are left open for future research.

\appendix 

\section{Proofs for continuous logic}\label{sec:appendix}

\subsection{Proof of Fact \ref{fact:rtrans}}\label{sec:randoproofs}

In this subsection, we go through each result and cited fact in \cite[Section 3.2]{CGH2} and note only which lines in the proofs from \cite{CGH2} need adjustment to extend from discrete $T$ to continuous $T$. Throughout the section, we make parenthetical references to \cite{CGH2} in each statement to indicate the corresponding result for \emph{discrete} $T$.  The first result, namely \cite[Fact 3.7]{CGH2}, is the quantifier elimination property for $T^R$ that we have already restated for continuous $T$ in Fact \ref{fact:randomizationsQE} above. So we will begin with \cite[Lemma 3.8]{CGH2}.

\begin{lemma}[{\cite[Lemma 3.8]{CGH2}}]\label{lem:randomization-of-continuous-function} For any continuous function $F\colon S_{x}(\Uc) \to \Rb$ (with $x$ finite), there is a unique continuous function $\Eb[F(-)]\colon S^R_{x}(\Uc^\Omega) \to \Rb$ satisfying that for any $\hbarr \in (\Uc_0^\Omega)^x$,
      \[
    \Eb[F(\tp(\hbarr/\Uc^\Omega))] = \sum_{A \in \Ac} \mathbb{P}(A)F(\tp(\hbarr|_A/\Uc)), 
      \]
      where $\Ac$ is a finite measurable partition of $\Omega$ on which the elements of $\hbarr$ are constant.
\end{lemma}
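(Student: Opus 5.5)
The idea is to define $\Eb[F(-)]$ first on the dense set of types of tuples from $\Uc_0^\Omega$ by the displayed formula, and then extend it by uniform continuity. The key tool is Fact \ref{fact:nup}, applied with $M=\Uc$.

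\textbf{Well-definedness on step functions.} Given $\hbarr\in(\Uc_0^\Omega)^x$, the displayed sum does not depend on the choice of the partition $\Ac$, since refining a partition leaves the sum unchanged. To see that it depends only on $\tp(\hbarr/\Uc^\Omega)$, rewrite it as $\int_{S_x(\Uc)} F\,d\nu$, where
\[
\nu=\sum_{A\in\Ac}\mathbb{P}(A)\,\delta_{\tp(\hbarr|_A/\Uc)}.
\]
For every $\cL_\Uc$-formula $\theta(x)$ we have $\nu(\theta)=\Eb[\theta(\hbarr)]$, so $\nu=\nu_{p|_{\Uc^c}}$ with $p=\tp(\hbarr/\Uc^\Omega)$ by Fact \ref{fact:nup}. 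Hence the value is determined by $p$.

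\textbf{Definition and continuity on all types.} This suggests defining, for every $p\in S^R_x(\Uc^\Omega)$,
\[
\Eb[F(p)]=\int F\,d\nu_{p|_{\Uc^c}}.
\]
This function is continuous, because it is the composition of the restriction map $S^R_x(\Uc^\Omega)\to S^R_x(\Uc^c)$, the homeomorphism $p\mapsto\nu_p$ of Fact \ref{fact:nup}, and the weak$^*$-continuous map $\nu\mapsto\int F\,d\nu$. By the previous paragraph it agrees with the displayed formula on $\hbarr$ from $\Uc_0^\Omega$.

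\textbf{Uniqueness.} It remains to show that types realized in $\Uc_0^\Omega$ are dense in $S^R_x(\Uc^\Omega)$. Types realized in the model $\Uc^\Omega$ are dense in $S^R_x(\Uc^\Omega)$. Moreover, $\Uc_0^\Omega$ is metrically dense in $\Uc^\Omega$, and the map $\hbarr\mapsto\tp(\hbarr/\Uc^\Omega)$ is continuous from the metric topology to the logic topology, since formulas are uniformly continuous. Combining these gives the required density, and two continuous functions agreeing on a dense set are equal.

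\textbf{Main obstacle.} The step needing the most care is applying Fact \ref{fact:nup} over the monster $\Uc$ together with the identification $\nu(\theta)=\Eb[\theta(\hbarr)]$. In continuous logic, $\theta$ is real-valued rather than a set, so this identity should be read as $\int\theta\,d\nu=\Eb[\theta(\hbarr)]$. It holds directly from the definition of $\Eb$ on step functions, because a type is determined by the values of formulas. Once that is in place, everything else is routine.
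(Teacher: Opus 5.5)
Your proposal is correct and follows essentially the same route as the paper. Existence comes from defining $\Eb[F](p)=\int F\,d\nu_{p|_{\Uc^c}}$, as in the discussion after Fact \ref{fact:nup} with $M=\Uc$, and uniqueness comes from density of $\Uc_0^\Omega$ in $\Uc^\Omega$; you simply fill in details the paper leaves implicit, namely identifying $\nu_{p|_{\Uc^c}}$ with the finitely supported measure $\sum_{A}\mathbb{P}(A)\delta_{\tp(\hbarr|_A/\Uc)}$ and passing from metric density to density in the type space.
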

\begin{proof}
Existence is immediate from the description of $\Eb[F]$ following Fact \ref{fact:nup}, and uniqueness follows from density of $\cU_0^\Omega$ in $\cU^\Omega$. The explicit proof in \cite{CGH2} also adapts directly to continuous $T$ by replacing indicator functions of discrete formulas  by continuous  formulas (viewed as functions on type spaces).
\end{proof}

The next fact is similar to the  ``perfect witness" property for full models of randomizations, which is established for discrete $T$ in \cite[Proposition 2.5]{BYKR}.  A detailed proof  is given in \cite{CGH2}, which works verbatim for continuous $T$. Alternatively, this fact can be easily derived from the $\sup$ version of \cite[Lemma 3.13]{BYRV}. 

\begin{fact}[{\cite[Fact 3.9]{CGH2}}]\label{fact:randomizationsb}
 For any continuous map $\varphi(x,y):S_{xy}(\mathcal{U}) \to [0,1]$, 
\begin{equation*}
    \mathcal{U}^{\Omega} \models \forall {x}\left(\textstyle \sup_{{y}} \Eb[\varphi({x},{y})] = \Eb\left[\sup_{{y}} \varphi({x},{y})\right] \right).
\end{equation*}
\end{fact}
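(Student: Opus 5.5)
We need to prove that in $\cU^\Omega$, for every $\bar h\in(\cU^\Omega)^x$, the two sides $\sup_y\Eb[\varphi(\bar h,y)]$ and $\Eb[\sup_y\varphi(\bar h,y)]$ agree. Here $\sup_y\varphi(x,y)$ is the continuous function on $S_x(\cU)$ given by the definable predicate $\sup_y\varphi$. The right-hand side is interpreted via Lemma~\ref{lem:randomization-of-continuous-function}.

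By uniform continuity of both sides in the metric $d^\Omega$, and density of $\cU_0^\Omega$ in $\cU^\Omega$, it suffices to check the identity for $\bar h\in(\cU_0^\Omega)^x$. Both sides are $1$-Lipschitz-type in $\bar h$ up to the modulus of uniform continuity of $\varphi$, so they pass to limits. We also restrict the supremum on the left to $y$ ranging over $\cU_0^\Omega$. This is allowed because $\cU_0^\Omega$ is dense and $\Eb[\varphi(\bar h,y)]$ is continuous in $y$.

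Fix $\bar h\in(\cU_0^\Omega)^x$ and a finite measurable partition $\Ac$ on whose pieces $\bar h$ is constant, say with value $\bar a_A$ on $A\in\Ac$. Then the right-hand side equals $\sum_{A}\mathbb P(A)\,\sup_y\varphi(\bar a_A,y)$.

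For the upper bound $\le$, take any $g\in(\cU_0^\Omega)^y$ and refine $\Ac$ to a partition $\Ac'$ on which $g$ is also constant. We get
\[
\Eb[\varphi(\bar h,g)]=\sum_{B\in\Ac'}\mathbb P(B)\varphi(\bar a_B,g_B)\le\sum_{B\in\Ac'}\mathbb P(B)\sup_y\varphi(\bar a_B,y),
\]
and the last sum is the right-hand side.

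For the lower bound $\ge$, fix $\epsilon>0$. Since $\cU$ is a model, for each $A\in\Ac$ we can pick $b_A\in\cU^y$ with $\varphi(\bar a_A,b_A)\ge\sup_y\varphi(\bar a_A,y)-\epsilon$. Let $g$ be the finite-image function equal to $b_A$ on $A$; this is measurable since $\Ac$ is finite. Then $\Eb[\varphi(\bar h,g)]\ge\text{RHS}-\epsilon$. Letting $\epsilon\to0$ gives the bound.

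The only delicate point is the density and continuity reduction. One must check that $\sup_y\Eb[\varphi(x,y)]$, evaluated in $\cU^\Omega$, equals the supremum over the dense set $\cU_0^\Omega$. One must also check that both sides depend uniformly continuously on $x$ in $d^\Omega$. Both follow from the uniform continuity modulus of $\varphi$ on $\cU$ together with the integral definition of $d^\Omega$.
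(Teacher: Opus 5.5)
Your argument is correct and matches the route the paper relies on. The paper gives no proof of its own: it defers to the detailed proof in \cite{CGH2}, which it says carries over verbatim to continuous $T$, or alternatively to the $\sup$ version of \cite[Lemma 3.13]{BYRV}. Your direct argument has the expected shape: a density and continuity reduction to finite-image tuples, the piecewise upper bound on a common refinement, and gluing $\epsilon$-optimal witnesses from $\cU$ over a finite partition for the lower bound. The only implicit assumption is that $x$ and $y$ are finite tuples, as in Lemma \ref{lem:randomization-of-continuous-function}, so that a single finite partition makes $\bar h$ and $g$ constant.
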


The next statement combines  \cite[Fact 3.10]{CGH2} and \cite[Remark 3.11]{CGH2}, which  together yield everything in Fact \ref{fact:rtrans} except for the moreover statement.

\begin{fact}[{\cite[Fact 3.10]{CGH2} and \cite[Remark 3.11]{CGH2}}]\label{fact:unique-definable-transfer}
   Let $\mu\in \mathfrak{M}_x(\Uc)$ be definable over $M\prec\cU$. There is a unique $M^\Omega$-definable type $r_\mu \in S^R_x(\Cc)$ defined as follows: for any $\varphi(x,\ybar) \in \Lc$, $F^{\Eb[\varphi(x,\ybar)]}_{r_\mu}= \Eb[F^{\varphi(x,\ybar)}_{\mu}]$.  
\end{fact}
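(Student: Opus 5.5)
We need to prove the final Fact: given $\mu$ definable over $M$, there is a unique $M^\Omega$-definable type $r_\mu\in S^R_x(\Cc)$ whose $\Eb[\varphi(x,\ybar)]$-definitions are $\Eb[F^{\varphi}_\mu]$.

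The plan is to build $r_\mu$ from its definition scheme. For each $\cL$-formula $\varphi(x,\ybar)$, definability of $\mu$ over $M$ makes $F^{\varphi(x,\ybar)}_\mu\colon S_{\ybar}(\cU)\to\R$ continuous. Hence, by Lemma \ref{lem:randomization-of-continuous-function}, it induces a continuous function $\Eb[F^{\varphi}_\mu]$ on $S^R_{\ybar}(\cU^\Omega)$, which restricts to $S^R_{\ybar}(M^\Omega)$ because $F^\varphi_\mu$ factors through $S_{\ybar}(M)$. Such a continuous function on the type space is an $M^\Omega$-definable predicate $\psi_\varphi(\ybar)$. The candidate $r_\mu$ is then the set of conditions $\Eb[\varphi(x,\bar c)]=\psi_\varphi(\bar c)$ for $\bar c\in\Cc$.

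Three things must be checked, in this order.

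First, \emph{consistency and completeness}. By quantifier elimination (Fact \ref{fact:randomizationsQE}), a type over $\Cc$ is determined by the values of $\Eb[\varphi(x,\bar c)]$. So it is enough to show these conditions are finitely satisfiable in $\Cc$, and realizing them at $\cU^\Omega$-parameters suffices. Because everything is elementary over $M^\Omega$, it is in fact enough to verify that, inside $\cU^\Omega$, for finitely many $\varphi_k$ and parameters $\bar h_k\in\cU^\Omega$ there is $a$ with each $\Eb[\varphi_k(a,\bar h_k)]$ approximately equal to $\Eb[F^{\varphi_k}_\mu](\bar h_k)$.

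This step is the main obstacle. For step functions $\bar h_k$ constant on the pieces $A$ of a finite partition, I would work piece by piece. By definability, $\mu$ is weak$^*$-approximable in the relevant finitely many formulas over the finitely many parameters $\bar h_k|_A$ by averages of types realized in $\cU$. So on each $A$ one can find $e_{A,1},\dots,e_{A,N}\in\cU$ whose average approximates $\mu$ on these instances. One then splits $A$ into $N$ subsets of equal measure, using that $\Omega$ is atomless, and lets $a$ take value $e_{A,j}$ on the $j$-th subset. The resulting computation $\Eb[\varphi_k(a,\bar h_k)]=\sum_A\mathbb{P}(A)\,\mu(\varphi_k(x,\bar h_k|_A))\pm\epsilon$ matches Lemma \ref{lem:randomization-of-continuous-function}. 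Fact \ref{fact:randomizationsb} handles the passage from dense step functions to arbitrary parameters, and the continuity of each $\psi_\varphi$ makes the approximation uniform.

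Second, the consistent complete type obtained this way is $M^\Omega$-definable, because its defining schema consists of the $M^\Omega$-definable predicates $\psi_\varphi$. The approach in \cite{CGH2} (their Fact 3.10 and Remark 3.11) does exactly this, replacing indicator functions with continuous formulas in the continuous setting.

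Third, \emph{uniqueness}. Any two $M^\Omega$-definable global types with the same definitions $F^{\Eb[\varphi]}$ agree on every $\Eb[\varphi(x,\bar c)]$. Quantifier elimination then shows they coincide. Finally, taking $\ybar$ to range over $M^c$ shows $r_\mu$ extends $\mu$, since $\Eb[F^\varphi_\mu](\tilde m)=\mu(\varphi(x,m))$.
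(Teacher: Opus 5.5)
Your argument is correct, but it is organized differently from the paper's. The paper's proof defers existence and uniqueness to the proof of \cite[Fact 3.10]{CGH2}, which only produces a $\cU^\Omega$-definable $r_\mu$. It then upgrades this to $M^\Omega$-definability via invariance. Since $F^\varphi_\mu$ is uniformly approximated by $\cL_M$-formulas $\psi_\epsilon(y,b)$, the approximants $\widehat{G}_\epsilon=\Eb[\psi_\epsilon(y,f_b)]$ of $\Eb[F^\varphi_\mu]$ agree on $M^\Omega$-conjugate tuples, so $r_\mu$ is $M^\Omega$-invariant.

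You instead build $M^\Omega$-definability into the scheme from the start. This is the same observation in measure-theoretic form. By the integral description $\Eb[F](q)=\int F\,d\nu_{q|_{\cU^c}}$, if $F^\varphi_\mu=G\circ\rho$ with $\rho\colon S_{\bar y}(\cU)\to S_{\bar y}(M)$ the restriction map, then $\Eb[F^\varphi_\mu](q)=\Eb[G](q|_{M^\Omega})$. This ``factoring through'' is what you should say, rather than that the function ``restricts''. It lets you skip the detour through $\cU^\Omega$-definability and invariance.

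Your explicit consistency argument is a sound, self-contained substitute for the citation. It uses elementarity to reduce to parameters in $\cU^\Omega$, uniform continuity to reduce to step functions, and then splits each cell into equal-measure pieces carrying points whose average approximates $\mu$.

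Three small corrections:
\begin{enumerate}
\item Approximating $\mu$ on finitely many instances by averages of realized types works for any Keisler measure over a model. It does not come from definability, which you need only for continuity of $F^\varphi_\mu$.
\item Fact \ref{fact:randomizationsb} is not what moves you from step functions to arbitrary parameters. That step is just uniform continuity of $\Eb[\varphi(x,\bar y)]$ and of $\psi_\varphi$.
\item To get $M^\Omega$-definitions for all $\cL^R$-formulas, not just the basic ones, invoke quantifier elimination to write each $\cL^R$-formula as a uniform limit of continuous combinations of formulas $\Eb[\varphi]$.
\end{enumerate}
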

\begin{proof}
The proof of \cite[Fact 3.10]{CGH2} works verbatim when $T$ is continuous. This establishes the existence and uniqueness of $r_\mu$, but only definability over $\mathcal{U}^\Omega$. Remark 3.11 of \cite{CGH2} then shows that $r_\mu$ is also $M^\Omega$-invariant (and hence definable over $M^\Omega$). For continuous $T$, one only needs to adjust the middle paragraph as follows.

Since $\mu$ is definable over $M$, for every $\epsilon > 0$, there exists an $\mathcal{L}_M$-formula $\psi_\epsilon(y,b)$ such that 
\begin{equation*} 
\sup_{q \in S_{y}(\mathcal{U})}\left|F_{\mu}^{\varphi}(q) -  \psi(q,b)\right| < \epsilon. 
\end{equation*}  
The $G_{\epsilon}$ in the proof of Lemma \ref{lem:randomization-of-continuous-function} (adapted directly from the proof of \cite[Lemma 3.8]{CGH2} as indicated above) can be constructed using this $\psi(y,b)$. Observe, 
\begin{multline*} 
\widehat{G}_{\epsilon}(\tp(h/\mathcal{U}^{\Omega})) =  (\mathbb{E}[\psi(y,f_{b})])^{\tp(h/\mathcal{U}^{\Omega})} =  (\mathbb{E}[\psi(h,f_{b})]) \\ 
=(\mathbb{E}[\psi(g,f_{b})]) = (\mathbb{E}[\psi(y,f_{b})])^{\tp(g/\mathcal{U}^{\Omega})} = \widehat{G}_{\epsilon}(\tp(g/\mathcal{U}^{\Omega})). 
\end{multline*} 
Other than this change, the rest of the argument is identical. 
\end{proof}

The next corollary in \cite{CGH2} follows immediately from the fact that $r_\mu$ extends $\mu$ and the discussion after Fact \ref{fact:nup}. (The direct proof in \cite{CGH2} also has a clear translation to continuous $T$.)

\begin{corollary}[{\cite[Corollary 3.12]{CGH2}}]\label{cor:rmu-def} 
Let $\mu \in \mathfrak{M}_x(\mathcal{U})$ be a definable measure. Then for any continuous function $f\colon S_{x}(\mathcal{U}) \to \mathbb{R}$, we have  $\int f\, d\mu = (\Eb[f])^{r_{\mu}}$. 
\end{corollary}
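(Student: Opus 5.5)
The plan is to deduce the statement directly from the fact that $r_\mu$ extends $\mu$ (Fact \ref{fact:rtrans}), together with the description of $\Eb[F]$ given after Fact \ref{fact:nup}. Fix a continuous $f\colon S_x(\cU)\to\R$. Applying that description with $\cU$ in place of $M$, the function $\Eb[f]\colon S^R_x(\cU^\Omega)\to\R$ is defined by
\[
\Eb[f](p)=\int_{S_x(\cU)} f\,d\nu_{p|_{\cU^c}}.
\]
We evaluate it at $p=r_\mu|_{\cU^\Omega}$. Since $\mu$ is definable over $M$, it is in particular definable over $\cU$. So Fact \ref{fact:rtrans} (used with parameters from $\cU$, or equivalently via condition $(\ast)$ for $\cL_\cU$-formulas) gives that $r_\mu$ extends $\mu$ as a global measure. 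This means $\nu_{r_\mu|_{\cU^c}}=\mu$, that is, $(\Eb[\theta(x)])^{r_\mu}=\mu(\theta(x))$ for every $\cL_\cU$-formula $\theta(x)$. Hence
\[
(\Eb[f])^{r_\mu}=\int f\,d\mu.
\]

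The one point needing care is that $\Eb[f]$, as defined through Fact \ref{fact:nup}, agrees with the continuous function from Lemma \ref{lem:randomization-of-continuous-function}. It must also be a legitimate ``formula'' (definable predicate) that can be evaluated at $r_\mu$. I would handle this in three steps.

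\begin{enumerate}
\item Stone--Weierstrass: any continuous $f$ on $S_x(\cU)$ is a uniform limit of $\cL_\cU$-formulas $\theta_k(x)$.
\item For each $k$, $\Eb[\theta_k]$ is a genuine $\cL^R$-formula over $\cU^c\subseteq\cU^\Omega$, and $|\Eb[f]-\Eb[\theta_k]|\le\|f-\theta_k\|_\infty$ everywhere. So $\Eb[f]$ is a definable predicate over $\cU^\Omega$, and its value at $r_\mu$ is $\lim_k(\Eb[\theta_k])^{r_\mu}$.
\item That limit equals $\lim_k\mu(\theta_k)=\int f\,d\mu$, because $r_\mu$ extends $\mu$ and integration commutes with uniform limits.
\end{enumerate}

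I expect no real obstacle here. The only subtlety is bookkeeping: identifying the two descriptions of $\Eb[f]$ (both are continuous and agree on formulas, hence on uniform limits, by the uniqueness in Lemma \ref{lem:randomization-of-continuous-function}), and noting that ``extends'' is applied with $\cU$ in the role of $M$.
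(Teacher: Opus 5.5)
Your proposal is correct and is essentially the paper's own argument: the paper derives the corollary immediately from the fact that $r_\mu$ extends $\mu$ (read with $\cU$ in the role of $M$ in Definition \ref{def:extend}) together with the description $\Eb[F](p)=\int F\,d\nu_{p|_{M^c}}$ given after Fact \ref{fact:nup}. Your extra Stone--Weierstrass bookkeeping is harmless but not needed, because that description already defines $\Eb[f]$ directly as a continuous function on the type space.
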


\begin{lemma}[{\cite[Lemma 3.13]{CGH2}}]\label{random:1} 
Let $\mu \in \mathfrak{M}_{x}(\mathcal{U})$ be definable. Let $(h_i)_{i \in I}$ be a net in $(\cU_0^{\Omega})^{x}$ such that $\lim_{i \in I} \tp(h_i/\mathcal{U}^{\Omega}) = r_{\mu} |_{\Uc^\Omega}$. For each $i \in I$,  let $\mathcal{D}_i$ be a finite measurable partition of $\Omega$ such that $h_i$ is constant on each $D \in \mathcal{D}_i$. Set $\mu_i = \sum_{D \in \mathcal{D}_i} \mathbb{P}(D) \delta_{h_i|_{D}}$. Then $\lim_{i \in I} \mu_i = \mu$.
\end{lemma}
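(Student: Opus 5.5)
The plan is to reduce everything to the weak$^*$ continuity of the map $p\mapsto\nu_{p|_{\cU^c}}$ from Fact \ref{fact:nup}, applied with $M$ replaced by $\cU$. That is, we view types over $\cU^\Omega$ through their restrictions to the constants $\cU^c$.

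First we identify $\mu_i$ with the measure attached to $\tp(h_i/\cU^\Omega)$. Fix an $\cL_\cU$-formula $\theta(x)$, or more generally a continuous $F\colon S_x(\cU)\to\R$. By Lemma \ref{lem:randomization-of-continuous-function},
\[
\Eb[F](\tp(h_i/\cU^\Omega))=\sum_{D\in\Dc_i}\mathbb{P}(D)F(\tp(h_i|_D/\cU))=\int F\,d\mu_i .
\]
For $F=\theta$ the left side equals $(\Eb[\theta(x)])^{\tp(h_i/\cU^\Omega)}$. So $\mu_i=\nu_{\tp(h_i/\cU^\Omega)|_{\cU^c}}$, where Fact \ref{fact:nup} is applied to the model $\cU$. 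We should check that the value does not depend on the choice of partition $\Dc_i$; this is clear, since refining a partition on which $h_i$ is constant does not change the weighted sum.

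Second, Fact \ref{fact:unique-definable-transfer} gives that $r_\mu$ extends $\mu$; more directly, Corollary \ref{cor:rmu-def} gives $\int F\,d\mu=(\Eb[F])^{r_\mu}$ for every continuous $F$. Hence $\nu_{r_\mu|_{\cU^c}}=\mu$.

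Third, we pass to the limit. For each continuous $F$, the function $\Eb[F]$ is continuous on $S^R_x(\cU^\Omega)$ (see the discussion after Fact \ref{fact:nup}). Since $\tp(h_i/\cU^\Omega)\to r_\mu|_{\cU^\Omega}$, we get
\[
\int F\,d\mu_i=\Eb[F](\tp(h_i/\cU^\Omega))\to \Eb[F](r_\mu|_{\cU^\Omega})=\int F\,d\mu .
\]
This is precisely weak$^*$ convergence $\mu_i\to\mu$, which is the topology on $\kM_x(\cU)$.

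The only mildly delicate step is the first one, namely making sure Lemma \ref{lem:randomization-of-continuous-function} really computes $\Eb[\theta]$ for $\cL_\cU$-formulas $\theta$ on $\cU_0^\Omega$, in the sense of the integral definition of $\Eb$. For constant pieces this is immediate from $\Eb[\theta(\hbarr)]=\int_\Omega\theta(\hbarr(t))\,d\mathbb{P}$. Everything else is continuity bookkeeping.
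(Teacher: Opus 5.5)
Your proposal is correct and is essentially the paper's argument. The paper, following \cite{CGH2}, computes
\[
\mu_i(\varphi(x,\bar b))=\sum_{D\in\mathcal{D}_i}\mathbb{P}(D)\varphi(h_i|_D,\bar b)=\int_\Omega\varphi(h_i(t),\bar b)\,d\mathbb{P}(t),
\]
which is the value of $\Eb[\varphi(x,\tilde{b})]$ at $h_i$ with constant parameters. It then passes to the limit using $\tp(h_i/\cU^\Omega)\to r_\mu|_{\cU^\Omega}$ and the fact that $r_\mu$ extends $\mu$. Your repackaging via $\nu_{p|_{\cU^c}}$ and continuity of $\Eb[F]$ is the same computation, stated for continuous test functions $F$ instead of formulas.
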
 
\begin{proof}
In the continuous setting, the first two displayed equations naturally condense into one: 
\[
\lim_{i \in I} \mu_i(\varphi(x,\bbar)) = \lim_{i \in I}
  \sum_{D \in \mathcal{D}_i} \mathbb{P}(D)\varphi(h_i|_{D},\bbar).
  \]
Then after the third and fourth equation symbols, replace the two instances of
\[
\mathbb{P}(\{t \in \Omega: \Uc \models \varphi(h_i(t),-) \}) \quad \text{with} \quad \int_\Omega \varphi(h_i(t),-)\,d\mathbb{P}(t).\qedhere
\]
\end{proof}

\begin{lem}[{\cite[Lemma 3.14]{CGH2}}]\label{lem:independent-limit} Fix a definable measure $\mu \in \mathfrak{M}_{x}(\mathcal{U})$ and a finite, measurable partition $\mathcal{A}$ of $\Omega$. Then there is a net $(h_i)_{i \in I}$ of elements in $(\Uc^\Omega_0)^{x}$    and finite measurable partitions $(\Dc_i)_{i \in I}$ of $\Omega$ satisfying the following properties:
\begin{enumerate}[$(i)$]
    \item $\lim_{i \in I} \tp(h_i/\Uc^\Omega) = r_{\mu}|_{\mathcal{U}^{\Omega}}$. 
    \item $h_i$ is constant on each element of $\mathcal{D}_i$.
    \item  $\mathbb{P}(A \cap D) = \mathbb{P}(A) \mathbb{P}(D)$ for each $A \in \mathcal{A}$ and $D \in \mathcal{D}_i$. 
\end{enumerate}
\end{lem}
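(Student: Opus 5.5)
The plan is to build the net directly by approximating $\mu$ with finitely supported measures and then realizing those averages by step functions that are independent of $\mathcal{A}$. The atomless probability space gives enough room for this. The rough idea is to produce an independent copy of each approximating average on each cell of $\mathcal{A}$.

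First, fix a net $(h'_i)_{i\in I}$ in $(\Uc_0^\Omega)^x$ with $\tp(h'_i/\Uc^\Omega)\to r_\mu|_{\Uc^\Omega}$. Such a net exists since $\Uc_0^\Omega$ is dense in $\Uc^\Omega$ and $\Uc^\Omega$ is a model: every type over it is approximately realized, and types over $\Uc^\Omega$ realized in $\Uc^\Omega$ form a dense set. Take partitions $\mathcal{D}'_i$ on whose pieces $h'_i$ is constant. By Lemma \ref{random:1}, the measures $\mu_i=\sum_{D\in\mathcal{D}'_i}\mathbb{P}(D)\delta_{h'_i|_D}$ converge to $\mu$.

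Next, "spread" $h'_i$ independently of $\mathcal{A}$. Since $\Omega$ is atomless, for each $i$ choose a partition $\mathcal{D}_i=\{E_D: D\in\mathcal{D}'_i\}$ with $\mathbb{P}(A\cap E_D)=\mathbb{P}(A)\mathbb{P}(D)$ for all $A\in\mathcal{A}$; this is done by splitting each $A$ into pieces of relative measures $\mathbb{P}(D)$. Define $h_i$ to take the value $h'_i|_D$ on $E_D$. Then $(ii)$ and $(iii)$ hold by construction, and it remains to check $(i)$.

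The key observation is that $\tp(h_i/\Uc^\Omega)$ need not equal $\tp(h'_i/\Uc^\Omega)$, since parameters in $\Uc^\Omega$ can correlate with the partition. So I would not try to prove equality of types. Instead, I would compare values of definable predicates $\Eb[\varphi(x,f)]$ for $f\in(\Uc_0^\Omega)^y$, which suffices by Fact \ref{fact:randomizationsQE} and density. Fix such $f$, constant on a partition $\mathcal{B}$. Then
\[
\Eb[\varphi(h_i,f)]=\sum_{B\in\mathcal{B}}\sum_{D}\mathbb{P}(B\cap E_D)\,\varphi(h'_i|_D,f|_B).
\]

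This is the main obstacle: the independence is only from $\mathcal{A}$, not from arbitrary $\mathcal{B}$. To handle this, I would let the index set also range over finite partitions $\mathcal{B}$ refining $\mathcal{A}$, directed by refinement, together with the original index. For such a pair, the splitting is done so that $\mathcal{D}_i$ is independent of $\mathcal{B}$, which is possible by atomlessness. Then for $f$ constant on $\mathcal{B}$, the displayed sum equals $\sum_B\mathbb{P}(B)\int\varphi(x,f|_B)\,d\mu_i$. This tends to $\sum_B\mathbb{P}(B)\int\varphi(x,f|_B)\,d\mu=\sum_B\mathbb{P}(B)F^\varphi_\mu(f|_B)$ by Lemma \ref{random:1}. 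By Fact \ref{fact:unique-definable-transfer} and Lemma \ref{lem:randomization-of-continuous-function}, this last quantity equals $(\Eb[\varphi(x,f)])^{r_\mu}$. Since every $f\in\Uc_0^\Omega$ is eventually constant on the partitions in the index, and types over $\Uc^\Omega$ are determined by such values via density and uniform continuity, condition $(i)$ follows.
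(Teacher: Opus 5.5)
Your proposal is correct and is essentially the argument the paper defers to (the proof of \cite[Lemma 3.14]{CGH2}, with indicators replaced by formulas). You realize finitely supported approximations of $\mu$ by step functions whose constancy partition is independent of a common refinement of $\mathcal{A}$ and the parameter partitions, then compute $\Eb[\varphi(h,f)]$ cell by cell to reduce to $\sum_{B}\mathbb{P}(B)F^{\varphi}_\mu(f|_B)=(\Eb[\varphi(x,f)])^{r_\mu}$; the one step worth writing out is that once $\mathcal{B}$ refines a partition $\mathcal{B}_0$ on which $f$ is constant, your sum regroups as $\sum_{B_0\in\mathcal{B}_0}\mathbb{P}(B_0)\int\varphi(x,f|_{B_0})\,d\mu_i$, which depends only on $i$, so convergence along your product directed set follows directly from Lemma \ref{random:1}.
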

\begin{proof}
In the final displayed computations, after the first three equation symbols replace the instances of $\boldsymbol{1}_{\varphi_i}(-,-)$ with $\varphi_i(-,-)$. 
\end{proof}

\begin{prop}[{\cite[Proposition 3.15]{CGH2}}]\label{prop:prod-same-1}
    If $\mu\in\kM_x(\cU)$ and $\nu\in\kM_y(\cU)$ are definable, then $ r_{\mu\otimes \nu}(x,y) = r_\mu(x)\otimes r_\nu(y)$.
\end{prop}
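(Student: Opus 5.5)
The plan is to use the uniqueness clause of Fact \ref{fact:unique-definable-transfer}. First, $\mu\otimes\nu$ is definable over some small model (Morley products preserve definability), so $r_{\mu\otimes\nu}$ is the \emph{unique} definable type in $S^R_{xy}(\Cc)$ such that, for every $\cL$-formula $\varphi(x,y,\zbar)$,
\[
F^{\Eb[\varphi(x,y,\zbar)]}_{r_{\mu\otimes\nu}}=\Eb[F^{\varphi(x,y,\zbar)}_{\mu\otimes\nu}].
\]
Next, $r_\mu\otimes r_\nu$ is a definable type in $T^R$, being a Morley product of definable types. So it suffices to show that $r_\mu\otimes r_\nu$ satisfies the same identity. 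By quantifier elimination (Fact \ref{fact:randomizationsQE}) and density, it is enough to check this on parameters $\bar h\in(\cU_0^\Omega)^{\zbar}$, after which both sides extend by continuity and definability.

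Fix $\bar h$ constant on the pieces of a finite measurable partition $\mathcal{A}$ of $\Omega$. The left side for $r_\mu\otimes r_\nu$ is computed as follows: realize $g\models r_\nu|_{\cU^\Omega}$, then evaluate
\[
F^{\Eb[\varphi(x,y,\zbar)]}_{r_\mu}(g,\bar h)=\Eb[F^{\varphi(x;y,\zbar)}_\mu](\tp(g\bar h/\cU^\Omega))
\]
by $(\ast)$ of Fact \ref{fact:rtrans}, and then take this value as a function of $g$ against $r_\nu$. The key step is to write $\Eb[F^{\varphi}_\mu(y,\bar h)]$, viewed as a function of $y$, as
\[
\sum_{A\in\mathcal{A}}\Eb\bigl[\mathbf{1}_A\cdot F^\varphi_\mu(y,\bar h|_A)\bigr].
\]
To keep everything continuous, I would encode $\mathbf{1}_A$ through an auxiliary-sort parameter, or through the coding trick of Theorem \ref{thm:rgstofim} using a constant-valued random element that takes distinct values on the pieces of $\mathcal{A}$. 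Then I would want
\[
\Bigl(\Eb\bigl[\mathbf{1}_A\, G(y)\bigr]\Bigr)^{r_\nu}=\mathbb{P}(A)\int G\,d\nu
\]
for continuous $G$ on $S_y(\cU)$. This is the content of Lemma \ref{lem:independent-limit} combined with Lemma \ref{random:1}. Approximate $r_\nu|_{\cU^\Omega}$ by $h_i\in(\cU_0^\Omega)^y$ with partitions $\Dc_i$ independent of $\mathcal{A}$. Then
\[
\Eb[\mathbf{1}_A G(h_i)]=\sum_{D\in\Dc_i}\mathbb{P}(A)\mathbb{P}(D)G(h_i|_D)\to\mathbb{P}(A)\int G\,d\nu .
\]
Applying this with $G=F^\varphi_\mu(-,\bar h|_A)$ gives
\[
\sum_A\mathbb{P}(A)\int F^\varphi_\mu(y,\bar h|_A)\,d\nu=\sum_A\mathbb{P}(A)\,F^{\varphi}_{\mu\otimes\nu}(\bar h|_A),
\]
which is exactly $\Eb[F^\varphi_{\mu\otimes\nu}](\tp(\bar h))$ by Lemma \ref{lem:randomization-of-continuous-function}.

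The main obstacle is justifying the interchange in the key step. We need that the $r_\nu$-definition of the predicate
\[
y\mapsto \Eb[F^\varphi_\mu](\tp(y\bar h))
\]
is obtained as the limit along $h_i$. This requires that the predicate be a genuine $\cU^\Omega$-definable predicate, approximable by $\Eb[\psi(y,\bar h,\bar b)]$ for formulas $\psi$ (using definability of $\mu$), so that evaluating along a net converging to $r_\nu|_{\cU^\Omega}$ gives the $r_\nu$-value. I would handle this by the same $\epsilon$-approximation $\psi_\epsilon$ used in the proof of Fact \ref{fact:unique-definable-transfer}, with the independence from Lemma \ref{lem:independent-limit} ensuring the product formula for each piece $A$. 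Definability over $M^\Omega$ of both sides then transfers the identity from $\cU^\Omega$ to all of $\Cc$, and uniqueness yields $r_{\mu\otimes\nu}=r_\mu\otimes r_\nu$.
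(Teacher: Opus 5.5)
Your proposal is correct and is essentially the argument the paper imports verbatim from \cite{CGH2}: you use uniqueness of the definable transfer (Fact \ref{fact:unique-definable-transfer}), check it on parameters $\bar h$ from $\cU_0^\Omega$ by approximating $r_\nu$ with finitely-valued $h_i$ whose partitions are independent of $\Ac$ (Lemma \ref{lem:independent-limit}), and use $\nu_i\to\nu$ (Lemma \ref{random:1}). The step you flag as the main obstacle is actually immediate: $F^{\Eb[\varphi]}_{r_\mu}(y,\bar h)$ is already a $\cU^\Omega$-definable predicate because $r_\mu$ is definable, so its $r_\nu$-value is $\lim_i F^{\Eb[\varphi]}_{r_\mu}(h_i,\bar h)$, and the splitting over $\Ac$ is only needed for the finitely-valued $h_i$ (via Lemma \ref{lem:randomization-of-continuous-function}), with no encoding of $\mathbf{1}_A$ and no $\psi_\epsilon$-approximations required.
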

\begin{proof}
The  proof works verbatim for continuous $T$. One only needs to account for the background fact that Morley products preserve definability, which is proved for continuous logic in \cite[Lemma 2.9]{AndGS} (as discussed at the beginning of Section \ref{sec:rando}). 
\end{proof}

The previous fact then yields the following corollary by induction.

\begin{corollary}[{\cite[Corollary 3.16]{CGH2}}]\label{cor:prod-same}
  Suppose  $\mu \in \mathfrak{M}_{x}(\mathcal{U})$  is definable. Then for every $n\geq 1$, $r_{\mu^{(n)}}(\bar{x}) = (r_\mu)^{(n)}(\bar{x})$.
\end{corollary}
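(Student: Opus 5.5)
The plan is to argue by induction on $n$, with Proposition \ref{prop:prod-same-1} supplying the inductive step. First note that every Morley power $\mu^{(n)}$ is definable, since Morley products preserve definability (\cite[Lemma 2.9]{AndGS}, as recalled at the start of Section \ref{sec:rando}). Hence $r_{\mu^{(n)}}$ exists by Fact \ref{fact:unique-definable-transfer}.

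The base case $n=1$ is trivial, since $\mu^{(1)}=\mu$ and $r_\mu^{(1)}=r_\mu$.

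For the inductive step, assume $r_{\mu^{(n)}}(\bar{x}) = r_\mu^{(n)}(\bar{x})$ with $\bar{x}=(x_1,\ldots,x_n)$, and write $\mu^{(n+1)}=\mu_{x_{n+1}}\otimes\mu^{(n)}_{\bar{x}}$. This is the same product convention used to define $r_\mu^{(n+1)}=r_\mu(x_{n+1})\otimes r_\mu^{(n)}(\bar{x})$; the identification of $\mu^{(n+1)}$ with this product, rather than some other bracketing, uses associativity of Morley products of definable measures. Applying Proposition \ref{prop:prod-same-1} to the definable measures $\mu$ and $\mu^{(n)}$ gives
\[
r_{\mu^{(n+1)}} = r_{\mu\otimes\mu^{(n)}} = r_\mu\otimes r_{\mu^{(n)}}.
\]
By the inductive hypothesis, the right-hand side equals $r_\mu\otimes r_\mu^{(n)}=r_\mu^{(n+1)}$, which completes the induction.

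The only point needing care is bookkeeping. The ordering and product conventions used for $\mu^{(n)}$ and for $r_\mu^{(n)}$ must agree, and for types in $T^R$ the Morley product of definable types is associative, so the bracketing on that side does not matter either. With the conventions of \cite{CGH2} fixed consistently on both sides, the argument needs nothing beyond Proposition \ref{prop:prod-same-1}.
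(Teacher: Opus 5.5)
Your proposal is correct and matches the paper, which obtains this corollary from Proposition~\ref{prop:prod-same-1} by induction on $n$, exactly as you do via $r_{\mu\otimes\mu^{(n)}}=r_\mu\otimes r_{\mu^{(n)}}$ and the inductive hypothesis. Your appeal to associativity is unnecessary: under the convention $\mu^{(n+1)}=\mu(x_{n+1})\otimes\mu^{(n)}(\bar{x})$, and likewise for $r_\mu^{(n+1)}$, the product is already in the form Proposition~\ref{prop:prod-same-1} requires.
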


This corollary supplies the final moreover statement in Fact \ref{fact:rtrans}.

\subsection{Proof of Proposition \ref{prop:ABC}}\label{sec:ABC}
When $T$ is discrete, Proposition \ref{prop:ABC} is nearly identical to \cite[Proposition 3.18]{CGH2}, except that we have been more specific about the base model $M$. In any case, with Subsection \ref{sec:randoproofs} in hand for continuous $T$, one can now follow the proof of \cite[Proposition 3.18]{CGH2} verbatim to obtain the proof of Proposition \ref{prop:ABC}.

\subsection{Proof of Lemma \ref{lem:SAfam}}

Recall that Lemma \ref{lem:SAfam} states that self-averaging measures are finitely approximable and thus, in particular, definable and self-commuting. This is proved for discrete $T$ in \cite{CGH2} by first establishing definability (see \cite[Corollary 2.4]{CGH2}), and then finite approximability (see \cite[Theorem 2.7$(b)$]{CGH2}). Almost every line in the proofs of these results can be copied verbatim in the case that $T$ is continuous. The only  points requiring comment are as follows.

\begin{enumerate}[$(1)$]
\item The proof of \cite[Corollary 2.4]{CGH2} contains a claim asserting that a certain set $C$ is closed in $[0,1]^\omega\times S_y(M)$. In the proof of this claim, replace  ``a $\psi(y)\in\tp(b/M)$'' with ``an open set $U$ containing $\tp(b/M)$", and then replace the two instances of ``$c\in\psi(\cU)$'' with ``$c$ such that $\tp(c/M)\in U$''.
\item The proof of \cite[Lemma 2.9]{CGH2} (one ingredient needed for \cite[Theorem 2.7$(b)$]{CGH2}) is written tersely since it is completely standard  in discrete logic. The continuous analogue was already addressed in Lemma \ref{lem:weird-extension}. 
\item The conclusion that self-averaging measures are self-commuting is explained in \cite[Remark 2.11]{CGH2} using \cite[Proposition 5.17]{CoGaHa}, which states that finitely approximable measures commute with definable measures (in discrete logic). The proof of \cite[Proposition 5.17]{CoGaHa} can be rewritten for continuous logic, but the translation would require some amount of work. Another proof of this result appears in \cite[Corollary 3.8]{GanSA} using different techniques that are  perhaps more readily adaptable to continuous logic. However, for our purposes, it suffices to just directly prove that any finitely approximable measure is self-commuting. This is a much easier exercise (see, e.g., \cite[Proposition 2.10$(b)$]{CoGa} and \cite[Corollary 2.32]{Ganthesis} whose proofs translate directly to continuous logic). 
\end{enumerate}

\subsection{Further discussion on generic stability of $r_\mu$}\label{sec:constants}

In this section, we revisit Remark 3.21 of \cite{CGH2}. In light of the  equivalence between generic stability for $r_\mu$ and self-averaging for $\mu$ established here, this remark asserts that to show $r_\mu$ is generically stable it suffices to prove generic stability with respect to basic $\cL^R$-formulas with parameters in $\cU^c$. We formulate this precisely in the next result.

\begin{proposition}\label{prop:SAgs}
Suppose $\mu\in\kM_x(\cU)$ is definable over $M\prec\cU$. Assume that for any $\cL_M$-formula $\varphi(x,y)$ and any $b\in (\cU^c)^y$,  if $(a_i)_{i<\omega}$ is a Morley sequence in $r_\mu$ over $M^\Omega$ then 
\[
\lim_{i\to\infty}\Eb[\varphi(a_i,b)]=(\Eb[\varphi(x,b)])^{r_\mu}.
\]
Then $r_\mu$ is generically stable over $M^\Omega$. 
\end{proposition}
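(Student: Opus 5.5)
The plan is to route the argument through self-averaging: first show that the hypothesis implies $\mu$ is self-averaging over $M$, and then apply Theorem \ref{thm:SAtorgs}, which says self-averaging implies $r_\mu$ is generically stable over $M^\Omega$. Since $\mu$ is assumed definable over $M$, it is in particular Borel-definable, so Definition \ref{def:SA} applies. It therefore suffices to fix $\lambda\in\KM(\mu/M)$, an $\cL_M$-formula $\varphi(x,y)$ and $b\in\cU^y$, and prove
\[
\lim_{i\to\infty}\lambda(\varphi(x_i,b))=\mu(\varphi(x,b)).
\]
(Parameters from $\cU$ can be absorbed into $y$, so restricting to $\cL_M$-formulas loses no generality.)

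The key step is to realize $\lambda$, restricted to formulas over $Mb$, inside $T^R$ as a Morley sequence in $r_\mu$ together with the constant parameter $\tilde b\in(\cU^c)^y$. This mirrors the proof of Proposition \ref{prop:SSA}$(i)\Rightarrow(ii)$, but with the parameter taken to be constant.

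1. Define $\sigma\in\KM_y(\mu/M)$ by $\sigma(\theta(\bfx,y))=\lambda(\theta(\bfx,b))$. Its marginal $\sigma|_y$ is the type $\tp(b/M)$.
2. Set $p=r_\mu^{(\omega)}|_{M^\Omega}$. By Fact \ref{fact:rtrans}, $p$ extends $\sigma|_{\bfx}=\mu^{(\omega)}|_M$.
3. Apply Lemma \ref{lem:measure-extension} to get $q\in S^R_{\bfx y}(M^\Omega)$ with $q|_{\bfx}=p$ such that $q$ extends $\sigma$.

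The main obstacle is that the second coordinate of a realization of $q$ need not be the constant $\tilde b$: it is only some element whose associated measure on $y$ is the Dirac measure at $\tp(b/M)$. I would handle this with two observations.

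First, an element $b'\in\Cc^y$ with $\nu_{\tp(b'/M^c)}=\delta_{\tp(b/M)}$ has the same type over $M^c$ as $\tilde b$. This follows from quantifier elimination (Fact \ref{fact:randomizationsQE}), since $\Eb[\theta(\tilde b)]=\theta(b)$ for every $\cL_M$-formula $\theta$.

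Second, I need the stronger statement that $(\boldsymbol{a},b')\equiv_{M^c}(\boldsymbol{a}',\tilde b)$ for some Morley sequence $\boldsymbol{a}'$ in $r_\mu$ over $M^\Omega$, which needs more care. I would instead build the realization directly. Work in $\cU^\Omega\prec\Cc$, take a Morley sequence $(a_i)$ in $r_\mu$ over $\cU^\Omega$ (hence over $M^\Omega\tilde b$), and compute
\[
\Eb[\varphi(a_i,\tilde b)]=(\Eb[\varphi(x,\tilde b)])^{r_\mu}=F^\varphi_\mu(b)=\mu(\varphi(x,b)),
\]
using $(\ast)$ of Fact \ref{fact:rtrans}. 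This computation does not identify the joint behaviour, however, so I would compare the two sides only through their types over $M^c$. The type of $(a_i)_i\tilde b$ over $M^c$ is a measure $\sigma'\in\KM_y(\mu/M)$ whose $y$-marginal is $\tp(b/M)$. The difficulty is that $\sigma'$ need not equal $\sigma$, because Morley sequences in $r_\mu$ over $M^\Omega\tilde b$ do not range over all of $\KM$.

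So the plan should instead use the hypothesis as stated: it quantifies over all Morley sequences over $M^\Omega$, not over $M^\Omega\tilde b$. That suggests the following approach. Take $(\boldsymbol{a},b')\models q$, find an automorphism of $\Cc$ fixing $M^\Omega$ pointwise and sending $b'$ to some element, and use homogeneity. Concretely, since $b'\equiv_{M^\Omega}\tilde b$ should hold (to be checked: for basic formulas with parameters in $M_0^\Omega$, one reduces piecewise via Lemma \ref{lem:randomization-of-continuous-function} to formulas over $M^c$), there is an automorphism $\tau$ over $M^\Omega$ with $\tau(b')=\tilde b$. Then $\tau(\boldsymbol{a})$ is still a Morley sequence in $r_\mu$ over $M^\Omega$, because $r_\mu$ is $M^\Omega$-invariant. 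Applying the hypothesis to $\tau(\boldsymbol{a})$ and $\tilde b$ gives
\[
\lim_i\lambda(\varphi(x_i,b))=\lim_i\Eb[\varphi(\tau(a_i),\tilde b)]=(\Eb[\varphi(x,\tilde b)])^{r_\mu}=\mu(\varphi(x,b)).
\]
This is exactly self-averaging, and Theorem \ref{thm:SAtorgs} then finishes the proof.

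The delicate point is verifying $b'\equiv_{M^\Omega}\tilde b$. I expect it to follow because the Dirac marginal forces $b'$ to be "constant" in distribution relative to every finite-image element of $M^\Omega$. If that fails, the fallback is to apply Lemma \ref{lem:measure-extension} with $M^\Omega\tilde b$ in place of $M^\Omega$.
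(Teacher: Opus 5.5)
Your overall route is the paper's. You reduce to self-averaging and apply Theorem \ref{thm:SAtorgs}, build $\sigma$, $p=r^{(\omega)}_\mu|_{M^\Omega}$ and $q$ via Lemma \ref{lem:measure-extension}, realize $q$ by $\boldsymbol{a}b'$, and note $b'\equiv_{M^c}\tilde b$. You then upgrade this to $b'\equiv_{M^\Omega}\tilde b$, move $b'$ to $\tilde b$ over $M^\Omega$ (so the image of $\boldsymbol{a}$ is still a Morley sequence in $r_\mu$ over $M^\Omega$), and apply the hypothesis. Your closing computation is correct. The gap is that the upgrade from $b'\equiv_{M^c}\tilde b$ to $b'\equiv_{M^\Omega}\tilde b$ is only asserted (``should hold (to be checked)'', ``I expect it to follow''). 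This is exactly the nontrivial content of the argument: the paper isolates it as Lemma \ref{lem:constanttypes} and calls it the missing ingredient in the continuous case.

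The justification you sketch does not work as written. Lemma \ref{lem:randomization-of-continuous-function} evaluates $\Eb[F]$ at types of finite-image tuples from $\cU^\Omega_0$. But $b'$ is an arbitrary element of the monster $\Cc$, so there is nothing to decompose piecewise on the $b'$ side. Your fallback of rerunning Lemma \ref{lem:measure-extension} over $M^\Omega\tilde b$ faces the same amalgamation problem: that lemma only matches types over $M^c$ and gives no control of the new $y$-coordinate over $M^\Omega$.

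The missing idea is as follows.
\begin{itemize}
\item Fix an $\cL$-formula $\varphi(y,z)$ and $f\in(M^\Omega_0)^z$ taking constant value $m_A$ on each piece $A$ of a finite partition $\mathcal{A}$.
\item Since $|\varphi(y,z)-r|$ is itself a formula, $b'\equiv_{M^c}\tilde b$ gives the almost-sure statement $\Eb[|\varphi(b',\tilde m_A)-\varphi(b,m_A)|]=0$, not merely equality of expectations.
\item Using $M^\Omega\prec\Cc$ and density of $M^\Omega_0$, choose $d\in(M^\Omega_0)^y$ with $|\Eb[\varphi(d,f)]-\Eb[\varphi(b',f)]|<\epsilon$ and $\Eb[|\varphi(d,\tilde m_A)-\varphi(b,m_A)|]<\epsilon/|\mathcal{A}|$ for every $A\in\mathcal{A}$.
\item Since $d$ and $f$ have finite image, splitting the integral over the pieces of $\mathcal{A}$ gives $|\Eb[\varphi(d,f)]-\Eb[\varphi(\tilde b,f)]|<\epsilon$. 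Hence $\Eb[\varphi(b',f)]=\Eb[\varphi(\tilde b,f)]$.
\item Quantifier elimination and density of $M^\Omega_0$ then yield $b'\equiv_{M^\Omega}\tilde b$.
\end{itemize}
Alternatively, you could split $\Eb[\varphi(b',f)]$ directly along the events of $\mathcal{A}$ using the suppressed auxiliary sort, but some such argument must be supplied. With this lemma in place, your proof coincides with the paper's.
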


Our  results above have not fully established this in the continuous case. The missing ingredient is the following lemma, which is likely known to experts but does not appear to be written explicitly in the literature. When $T$ is discrete, this result is implicit in the proof of \cite[Fact 3.4]{Khanaki}.  
Our proof for continuous $T$ will take a more direct elementary approach.

\begin{lemma}\label{lem:constanttypes}
Fix $M\prec\cU$, $a\in \cU^x$ and $b\in\mathcal{C}^x$. Then 
\[
\tilde{a} \equiv_{M^\Omega} b \quad\text{ if and only if }\quad \tilde{a} \equiv_{M^c} b.
\]
\end{lemma}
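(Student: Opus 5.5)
The forward direction is trivial, since $M^c\subseteq M^\Omega$. For the converse, assume $\tilde{a}\equiv_{M^c} b$. By Fact \ref{fact:randomizationsQE} (applied with parameters), it suffices to show that $\Eb[\varphi(\tilde{a},h)]=\Eb[\varphi(b,h)]$ for every $\cL$-formula $\varphi(x,z)$ and every tuple $h$ from $M^\Omega$. The map $h\mapsto \Eb[\varphi(c,h)]$ is uniformly continuous with respect to $d^\Omega$, uniformly in $c$. Since $M_0^\Omega$ is dense in $M^\Omega$, we may therefore assume $h\in (M_0^\Omega)^z$. Fix a finite measurable partition $A_1,\ldots,A_k$ of $\Omega$ and elements $m_1,\ldots,m_k\in M^z$ such that $h$ takes the constant value $m_j$ on $A_j$. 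The left-hand side is then computed directly:
\[
\Eb[\varphi(\tilde{a},h)]=\sum_{j=1}^k\mathbb{P}(A_j)\,\varphi(a,m_j).
\]

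The first key step is to show that, for each fixed $m\in M^z$, the random variable $\varphi(b,\tilde{m})$ is almost surely equal to the constant $r_m=\varphi(a,m)$. To do this, I will use the $\cL_M$-formula
\[
\theta_m(x)=(\varphi(x,m)-r_m)^2,
\]
which is legitimate because real constants are available as connectives in continuous logic. Since $b\equiv_{M^c}\tilde{a}$, we get $\Eb[\theta_m(b)]=\Eb[\theta_m(\tilde{a})]=\theta_m(a)=0$. To turn this into an almost-sure statement, I will work in the auxiliary sort of $T^R$ (the $[0,1]$-valued random variables; see \cite{BYRV}), where $\Eb[\theta_m(b)]$ is the expectation of the random variable $[\![\theta_m(b)]\!]$. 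Vanishing of this expectation forces $[\![\varphi(b,\tilde{m})]\!]=r_m$ as elements of the auxiliary sort.

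The second key step is a locality property. For each $j$, the event $A_j$ is an element of the auxiliary sort (as an indicator) coming from $M^\Omega$, and $h$ agrees with $\tilde{m}_j$ on $A_j$, meaning $\Eb[1_{A_j}\,d(h,\tilde{m}_j)]=0$. The randomization axioms of \cite{BYRV} (``$[\![\cdot]\!]$ is computed locally'') then give
\[
1_{A_j}[\![\varphi(b,h)]\!]=1_{A_j}[\![\varphi(b,\tilde{m}_j)]\!]=1_{A_j}\,r_{m_j}.
\]
Summing over $j$ and taking expectations yields
\[
\Eb[\varphi(b,h)]=\sum_{j=1}^k\mathbb{P}(A_j)\,r_{m_j},
\]
which matches the expression for $\Eb[\varphi(\tilde{a},h)]$ above.

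I expect this locality step to be the main obstacle. The issue is that $b$ lives in the monster $\Cc$ rather than in a concrete model $\mathcal{U}^\Omega$, so we cannot evaluate anything pointwise in $t\in\Omega$. My first attempt is to cite the relevant axioms of $T^R$ for the auxiliary sort from \cite{BYRV} (compatibility of $[\![\cdot]\!]$ with the metric and with gluing along events). If that turns out to be awkward, the fallback is to express the required identity as a $\sup$-sentence over $x$ with parameters $h,\tilde{m}_1,\ldots,\tilde{m}_k$ and the events $A_j$. Such a sentence holds in $M^\Omega$, where it can be verified pointwise, and therefore transfers to $\Cc$ by elementarity.
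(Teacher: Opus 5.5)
Your argument is correct, but it takes a different route from the paper.

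You work in the auxiliary sort of $\Cc$. From $\Eb[(\varphi(b,\tilde m)-r_m)^2]=0$ you get the almost-sure identity $[\![\varphi(b,\tilde m)]\!]=r_m$. Locality of $[\![\cdot]\!]$ then lets you compute $\Eb[\varphi(b,h)]$ exactly, without ever approximating $b$. Locality is derivable from the axioms of $T^R$ in \cite{BYRV}, since $[\![\cdot]\!]$ commutes with connectives and the uniform continuity of $\varphi$ is expressed by conditions holding in $T$.

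The paper instead stays entirely in the main sort, in keeping with its convention of suppressing the auxiliary sort. It records only $\Eb[|\varphi(b,\tilde m_A)-r_A|]=0$. It then uses $M^\Omega\preceq\Cc$ and density of $M^\Omega_0$ to pick a simple function $d\in(M^\Omega_0)^x$ whose values $\Eb[\varphi(d,f)]$ and $\Eb[|\varphi(d,\tilde m_A)-r_A|]$ are close to those of $b$. The whole computation is then done pointwise on $\Omega$ for $d$, giving agreement up to $2\epsilon$.

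Your version gives a more structural picture: the random variable $[\![\varphi(b,\tilde m)]\!]$ is a.s.\ constant and evaluation is local. But it depends on the axiomatization of $T^R$ and on the auxiliary sort of an arbitrary model being an $L^1$-space. The paper's proof is self-contained and uses only the $\Eb[\cdot]$ formalism.

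Your fallback is essentially the paper's mechanism in universal rather than existential form, and it does not even need events as parameters. The inequality $\big|\Eb[\varphi(x,h)]-\sum_j\mathbb{P}(A_j)r_{m_j}\big|\leq\sum_j\Eb[|\varphi(x,\tilde m_j)-r_{m_j}|]$ holds for every $x$ in $M^\Omega$: check it pointwise on simple functions and use density. It therefore holds in $\Cc$ by elementarity, and substituting $x=b$ finishes the proof.
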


\begin{proof}
The forward implication is immediate. For the converse, assume
$\tilde{a} \equiv_{M^c} b$. 
By quantifier elimination for $T^R$, density of $M^\Omega_0$ in $M^\Omega$, and uniform continuity, it suffices to show that for every $\cL$-formula $\varphi(x,y)$ and every $f\in (M^\Omega_0)^y$, we have
\[
\Eb[\varphi(b,f)]=\Eb[\varphi(\tilde{a},f)].
\tag{1}
\]
So fix such $\varphi$ and $f$. Let $\mathcal{A}$ be a finite measurable partition of $\Omega$ such that, for all $A\in\mathcal{A}$,  $f|_A$ takes a constant value $m_A\in M^y$. For $A\in\mathcal{A}$, set $r_A\coloneqq\varphi(a,m_A)\in \R$.  
Since $\tilde{a} \equiv_{M^c} b$, for every $A\in\mathcal A$, we have
\[
\Eb[|\varphi(b,\widetilde m_A)-r_A|]=\Eb[|\varphi(\widetilde a,\widetilde m_A)-r_A|]
=
0.
\tag{2}
\]

Fix $\epsilon>0$. Since $M^\Omega$ is an elementary substructure of $\mathcal C$, and $M^\Omega_0$ is dense in $M^\Omega$, we may choose $d\in(M^\Omega_0)^x$ such that
\[
|\Eb[\varphi(d,f)]-\Eb[\varphi(b,f)]|<\epsilon
\tag{3}
\]
and, via (2), for every $A\in\mathcal{A}$,
\[
\Eb[|\varphi(d,\widetilde m_A)-r_A|]<\frac{\epsilon}{|\mathcal{A}|}.
\tag{4}
\]
Since $f(t)=m_A$ for all $t\in A$, we have
\begin{align*}
|\Eb[\varphi(d,f)]-\Eb[\varphi(\tilde a,f)]|
&=\left|\sum_{A\in\mathcal{A}} \int_A\big(\varphi(d(t),m_A)-r_A\big)\,d\mathbb{P}(t)\right|\\
&\leq\sum_{A\in\mathcal{A}} \int_A |\varphi(d(t),m_A)-r_A|\,d\mathbb{P}(t)\\
&\leq\sum_{A\in\mathcal{A}}\Eb[|\varphi(d,\widetilde m_A)-r_A|]\\
&<\epsilon,
\end{align*}
where the final inequality uses (4).
Together with (3), this gives
\[
|\Eb[\varphi(b,f)]-\Eb[\varphi(\tilde{a},f)]|<2\epsilon.
\]
As $\epsilon>0$ was arbitrary, (1) follows.
\end{proof}

\begin{proof}[Proof of Proposition \ref{prop:SAgs}]
Let $\mu\in \kM_x(\cU)$ be as in the statement. It suffices to show that $\mu$ is self-averaging over $M$. So fix some $\lambda\in \KM(\mu/M)$ and an $\cL_{\cU}$-formula $\varphi(x,b)$, with $b\in \cU^y$. As in the proof of Proposition \ref{prop:SSA}$[(i)\Rightarrow(ii)]$, we define $p=r^{(\omega)}_\mu|_{M^\Omega}$, which extends $\mu^{(\omega)}|_M=\lambda|_M$. Define $\sigma\in\kM_{\bfx y}(M)$ such that, given an $\cL_M$-formula $\theta(\bfx,y)$, $\sigma(\theta(\bfx,y))=\lambda(\theta(\bfx,b))$. Note that $p$ extends $\sigma|_{\bfx}$. By Lemma \ref{lem:measure-extension}, there is some $q\in S^R_{\bfx y}(M^\Omega)$ such that $q|_{\bfx}=p$ and $q$ extends $\sigma$. Let $(a'_i)_{i<\omega}b'\models q$. Since $q|_y$ extends $\sigma|_y=\tp(b/M)$, it follows that $b'\equiv_{M^c} \tilde{b}$. By Lemma \ref{lem:constanttypes}, $b'\equiv_{M^\Omega} \tilde{b}$. Choose $(a_i)_{i<\omega}$ such that $(a_i)_{i<\omega}\tilde{b}\equiv_{M^\Omega}(a'_i)_{i<\omega}b'$. Then $(a_i)_{i<\omega}\tilde{b}\models q$, which implies  $(a_i)_{i<\omega}$ is a Morley sequence in $r_\mu$ over $M^\Omega$ (recall $q|_{\bfx}=p$). One can now verify that $\lim_{i\to\infty}\lambda(\varphi(x_i,b))=\mu(\varphi(x,b))$ using essentially the same steps as in the proof of Proposition \ref{prop:SSA}$[(i)\Rightarrow(ii)]$. 
\end{proof}


\begin{thebibliography}{1}

\bibitem{ACPgs}
H. Adler, E. Casanovas, and A. Pillay, \emph{Generic stability and stability},
  J. Symb. Log. \textbf{79} (2014), no.~1, 179--185. \MR{3226018}

\bibitem{AndGS}
A. Anderson, \emph{Generically stable measures and distal regularity in
  continuous logic}, arXiv:2310.06787, 2023.


\bibitem{BYT}
I. Ben Yaacov, \emph{Transfer of properties between measures and random types}, unpublished research note, \url{http://math.univ-lyon1.fr/~begnac/articles/MsrPrps.pdf}. 


  
  \bibitem{BYRV}
I. Ben~Yaacov, \emph{On theories of random variables}, Israel J. Math.
  \textbf{194} (2013), no.~2, 957--1012. \MR{3047098}


\bibitem{BYKR}
I. Ben Yaacov and H. J. Keisler, \emph{Randomizations of models as metric structures}, Confluentes Math. \textbf{1} (2009), no. 2, 197--223. \MR{2561997}



\bibitem{BRZK}
V. Bergelson, D. Robertson, and P. Zorin-Kranich, \emph{Triangles in
  {C}artesian squares of quasirandom groups}, Combin. Probab. Comput.
  \textbf{26} (2017), no.~2, 161--182. \MR{3603962}



\bibitem{CGK}
A. Chernikov, K. Gannon, and K. Krupi{\'n}ski, \emph{Definable convolution and
  idempotent {K}eisler measures {III}. {G}eneric stability, generic
  transitivity, and revised {N}ewelski's conjecture},  J. Lond. Math. Soc., published online June 24, 2026.



\bibitem{CoGa}
G. Conant and K. Gannon, \emph{Remarks on generic stability in independent
  theories}, Ann. Pure Appl. Logic \textbf{171} (2020), no.~2, 102736, 20.
 
 \bibitem{CoGaHa}
 G. Conant, K. Gannon, and J. Hanson, \emph{Keisler measures in the wild}, Model Theory \textbf{2} (2023), no.~1, 1--67.
 
\bibitem{CGH2}
\bysame, \emph{Generic stability, randomizations, and {NIP} formulas},
J. Math. Log. (2025), Paper No. 2550016, 45.

\bibitem{Day}
M. M. Day, \emph{Fixed-point theorems for compact convex sets}, Illinois J.
Math. \textbf{5} (1961), 585--590. \MR{0138100}

\bibitem{deLaRue}
T. de la Rue, \emph{An introduction to joinings in ergodic theory}, Discrete
Contin. Dyn. Syst. \textbf{15} (2006), no. 1, 121--142. \MR{2191388}



\bibitem{DitEif}
S.~Z. Ditor and L.~Q. Eifler, \emph{Some open mapping theorems for measures},
  Trans. Amer. Math. Soc. \textbf{164} (1972), 287--293. \MR{477729}

\bibitem{Ganthesis}
K. Gannon, \emph{Approximation theorems for {K}eisler measures}, Ph.D. thesis,
  University of Notre Dame, 2020.

\bibitem{GanSA}
\bysame, \emph{Sequential approximations for types and {K}eisler measures},
  Fund. Math. \textbf{257} (2022), no.~3, 305--336. \MR{4416017}

\bibitem{Gannon-note}
\bysame, \emph{On transfer maps and the Morley product in {NIP} theories},
J. Symb. Log. (2026), 1--24, published online.

\bibitem{GHevents}
K. Gannon and J. E. Hanson, \emph{Model theoretic events}, Ann. Pure
Appl. Logic \textbf{178} (2027), no. 1, Paper No. 103825.



\bibitem{HewSav}
E. Hewitt and L. J. Savage, \emph{Symmetric measures on {C}artesian
products}, Trans. Amer. Math. Soc. \textbf{80} (1955), no.~2, 470--501.
\MR{0076206}

\bibitem{Hoover}
D. N. Hoover, \emph{Relations on probability spaces and arrays of random
variables}, preprint, Institute for Advanced Study, Princeton, 1979.

\bibitem{HP}
E. Hrushovski and A. Pillay, \emph{On {NIP} and invariant measures}, J. Eur.
Math. Soc. (JEMS) \textbf{13} (2011), no. 4, 1005--1061. \MR{2800483}



    \bibitem{HPS}
E. Hrushovski, A. Pillay, and P. Simon, \emph{Generically stable and smooth
  measures in {NIP} theories}, Trans. Amer. Math. Soc. \textbf{365} (2013),
  no.~5, 2341--2366. \MR{3020101}




\bibitem{KRS}
I. Kaplan, N. Ramsey, and P. Simon, \emph{Generic stability independence and
treeless theories}, Forum Math. Sigma \textbf{12} (2024), Paper No. e49.


\bibitem{KhGSmodes}
K. Khanaki, \emph{Generic stability and modes of convergence}, J. Symb. Log.
  \textbf{91} (2026), no.~3, 1108--1132.

\bibitem{Khanaki}
\bysame, \emph{{K}eisler measures and generically stable random types},
arXiv:2605.15870, 2026.

\bibitem{Matus}
F. Mat{\'u}{\v{s}}, \emph{Finite partially exchangeable arrays}, Research
Report No. 1856, Institute of Information Theory and Automation, Academy of
Sciences of the Czech Republic, Prague, 1995.



\bibitem{phelps-book}
R.~R. Phelps, \emph{Lectures on {C}hoquet's theorem}, 2nd ed., Lecture Notes in
  Mathematics, vol. 1757, Springer, Berlin, Heidelberg, 2001.

\bibitem{PiTa}
A. Pillay and P. Tanovi\'c, \emph{Generic stability, regularity, and
  quasiminimality}, Models, logics, and higher-dimensional categories, CRM
  Proc. Lecture Notes, vol.~53, Amer. Math. Soc., Providence, RI, 2011,
  pp.~189--211.

\bibitem{Ryzh}
V. V. Ryzhikov, \emph{Joinings, intertwining operators, factors, and mixing
properties of dynamical systems}, Russian Acad. Sci. Izv. Math. \textbf{42}
(1994), no. 1, 91--114. \MR{1220583}

\bibitem{Sibook}
P. Simon, \emph{A guide to {NIP} theories}, Lecture Notes in Logic, vol.~44,
  Association for Symbolic Logic, Chicago, IL; Cambridge Scientific Publishers,
  Cambridge, 2015. 

\bibitem{Usvy}
A. Usvyatsov, \emph{On generically stable types in dependent theories}, J.
Symb. Log. \textbf{74} (2009), no. 1, 216--250. \MR{2499428}

\end{thebibliography}
\end{document}